\newcommand{\acts}{\curvearrowright}
\newtheorem{theorem}{Theorem}[section]
\newtheorem{corollary}[theorem]{Corollary}
\newtheorem{lemma}[theorem]{Lemma}
\newtheorem{rmk}[theorem]{Remark}
\newtheorem{example}[theorem]{Example}
\newtheorem{definition}[theorem]{Definition}
\newcommand{\Set}[1]{\{ #1 \}}
\newcommand{\G}{\mathcal{G}}
\newcommand{\B}[1]{\mathcal{B}_{#1}}
\title{Rigidity, Generators and Homology of Interval Exchange Groups}
\date{\today}
\newcommand{\Gn}[1]{\mathcal{G}^{(#1)}}
\newcounter{theoremintro}
\newtheorem{thmintro}[theoremintro]{Theorem}
\newtheorem{rmkintro}[theoremintro]{Remark}
\newtheorem{qnintro}[theoremintro]{Question}
\author{Owen Tanner}
\address[Owen Tanner]{School of Mathematics and Statistics,
University of Glasgow,
\linebreak University Place, Glasgow G12 8QQ, United Kingdom}
\email[]{o.tanner.1@research.gla.ac.uk}
\urladdr{https://owentanner1997.wordpress.com/}
\begin{document}

\maketitle
\section*{Abstract}
Let $\Gamma$ be a dense countable subgroup of $\mathbb{R}$. Then, consider $IE(\Gamma)$; the group of piecewise linear bijections of $[0,1]$ with finitely many angles, all in $\Gamma$. We introduce and systematically study a family of partial transformation groupoids coming from inverse semigroups,  $\G_\Gamma$, that realise $IE(\Gamma)$ as a topological full group. This new perspective on the groupoid models $\G_\Gamma$ of $IE(\Gamma)$ allows us to better understand the underlying C*-algebras and to compute homology. We show that $H_*(\G_\Gamma)=H_{*+1}(\Gamma)$. We show $C^*_r(\G_\Gamma)$ is classifiable in the sense of the Elliott classification program of C$^*$-algebras. We then classify these groups via the Elliott invariant, showing $IE(\Gamma) \cong IE(\Gamma') \Leftrightarrow \Gamma=\Gamma'$ as subsets of $\mathbb{R}$. We relate the Elliott invariant to Groupoid Homology via Matui's HK Conjecture. We relate the homology of $IE(\Gamma)$ to the homology of $\Gamma$ using the recent framework developed by Li. We investigate in greater detail three key cases, namely if $\Gamma \subset \mathbb{Q}$, if $\Gamma \cong \mathbb{Z}^n$, and if $\Gamma$ is a ring. For these three cases, we study homology in greater detail and find explicit generating sets. 
\section{Introduction}
The story of topological full groups begins with Giordano-Putnam-Skau\cite{giordano1999full}, who introduced the topological full group of a Cantor minimal system as the group of homeomorphisms of the Cantor set that locally are powers of a homeomorphism $T$. Later, Matui (in \cite{matui2012homology}) defined the topological full group $\mathsf{F}(\G)$ of an ample groupoid $\G$. The idea is to study the unitary subgroup of the inverse monoid of open compact bisections, in effect piecing together partial symmetries into global symmetries. \ \\ \ \\
Since then, topological full groups have solved many existence questions for infinite simple groups with various finiteness properties. For example, they provided the first examples of:
\begin{itemize}
    \item Infinite simple, finitely generated amenable groups \cite{juschenkomonod}.
    \item Simple finitely generated groups of intermediate growth \cite{nekrashevych2018palindromic}.
    \item Simple groups separated by finiteness properties \cite{skipper2019simple}.
\end{itemize}
The philosophy of the program of topological full groups is that we would be able to determine information about $\mathsf{F}(\G)$ by studying the underlying groupoids $\G$. Many things are known now in this direction \cite{matui2014topological} \cite{li2022} \cite{nekrashevych2019simple} but some questions, especially determining amenability or the existence of free subgroups, remain mysterious \cite{extensiveamenability}. \ \\ \ \\
A particularly interesting class of topological full groups, which have attracted much attention recently is certain groups of interval exchanges \cite{matui2006some} \cite{chornyi2020topological} \cite{extensiveamenability} \cite{bon2018rigidity}. 

\begin{definition} [$IE(\Gamma)$]
Let $\Gamma$ be a countable dense additive subgroup of $\mathbb{R}$, containing $1$. Then, let $IE(\Gamma)$ denote the group of piecewise linear bijections $f$ of $[0,1]$ with finitely many angles, all in $\Gamma$. That is $\{ft-t \; : t \in [0,1] \} \subset_{fin} \Gamma$. 
\end{definition}
One reason for the interest in these groups is the connection to classical dynamics, where dynamical systems coming from interval exchanges have been popular to study for some time \cite{katokstepin}. For further information about the dynamical perspective on interval exchanges, we recommend the survey \cite{viana} and book \cite{katokhasselblatt}. \ \\ \ \\ Also, these groups have been studied from the perspective of geometric group theory. The reason for this is that due to results by Juschenko-Monod \cite{juschenkomonod} and Matui \cite{matui2006some}, whenever $\Gamma=\mathbb{Z} \oplus \lambda \mathbb{Z}$ (for some irrational $\lambda$) the derived subgroup of $IE(\Gamma)$ is a rare example of a simple, finitely generated amenable group. The existence of such groups was first shown in \cite{juschenkomonod}.
\ \\ \ \\
Another reason for the interest in $IE(\Gamma)$ when $\Gamma$ is finitely generated is to understand the group of interval exchange transformations.  
\begin{definition}[IET]
  Let IET be the group of right continious permutations $g$ of $[0,1]$ such that the set $\{gt-t, \; t \in [0,1] \}$ is finite. 
\end{definition}
\begin{rmkintro}[IET Locally Embeds into $IE(\Gamma)$, where $\Gamma \cong \mathbb{Z}^n$]
For any finite set of elements $\gamma_1,\gamma_2,...,\gamma_n \in \text{IET}$. Then $S=\{\gamma_it-t \; : t \in [0,1], i=1,...,n \}$ is finite.  Note $\mathbb{R}$ is locally polycyclic. Therefore, there exists some polycyclic subgroup of $\mathbb{R}$, $\Gamma (\cong \mathbb{Z}^n)$ such that $S \subset_{fin} \Gamma$. Then, for all $i$, $\gamma_i \in IE(\Gamma)$. 
\label{local embedding}
\end{rmkintro}
There are outstanding open questions about IET. See for example \cite{de2013groupes}, \cite{extensiveamenability}. The first question is attributed to Katok. 
\begin{qnintro}
Does IET contain any nonamenable free groups? Is IET amenable? \label{question intro}
\end{qnintro}
Through Remark \ref{local embedding}, one approach to the above question is to study the same question about $IE(\Gamma)$ where $\Gamma$ is polycyclic. Note that the main result of \cite{juschenkomonod} establishes that $IE(\Gamma)$ is amenable whenever $\Gamma \cong \mathbb{Z}^2$ and the main result of \cite{extensiveamenability} establishes that $IE(\Gamma)$ is amenable whenever $\Gamma \cong \mathbb{Z}^3$. \ \\ 

We take a different perspective from the above papers. Instead of taking an action of $\Gamma/\mathbb{Z}$ on the Cantor space, we define a partial transformation groupoid $\alpha:\Gamma \acts X$ based on groupoids considered in \cite{xinlambda} which realise $IE(\Gamma)$ as a topological full group.
\begin{thmintro}
 $IE(\Gamma)$ is the topological full group of a minimal, free partial action of $\Gamma$ on the Cantor space. 
\end{thmintro}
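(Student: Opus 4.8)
The plan is to build the Cantor space $X$ and the partial $\Gamma$-action by \emph{doubling} the breakpoints of $[0,1)$ that lie in $\Gamma$. Concretely, let $\mathcal{B}$ be the Boolean algebra of subsets of $[0,1)$ generated by the half-open intervals $[a,b)$ with $a,b \in \Gamma \cap [0,1]$, and let $X$ be its Stone space; equivalently, $X$ is obtained from $[0,1)$ by splitting each point $\gamma \in \Gamma \cap (0,1)$ into a left and a right copy, so that every half-open interval becomes clopen. I would first check that $X$ is a Cantor space: it is compact and totally disconnected as a Stone space, second countable since $\Gamma$ is countable, and has no isolated points because $\Gamma$ is dense (so no basic clopen set $[a,b)$ is a singleton). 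The partial action $\alpha \colon \Gamma \acts X$ is then induced by translation: for $\gamma \in \Gamma$ the map $t \mapsto t+\gamma$, defined on $\{t : t, t+\gamma \in [0,1)\}$, carries half-open intervals to half-open intervals and hence lifts to a homeomorphism between the clopen sets of $X$ corresponding to its domain and range, yielding the transformation groupoid $\G_\Gamma$ with unit space $X$.

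Next I would verify minimality and freeness. For minimality it suffices that every orbit is dense: given a basic clopen set $[a,b)$ and any point $x \in X$ lying over some $t \in [0,1)$, density of $\Gamma$ lets me pick $\gamma \in \Gamma$ with $t + \gamma \in [a,b) \subset [0,1)$, so the orbit meets every basic clopen set. Freeness follows because a nonzero translation $t \mapsto t+\gamma$ has no fixed points on $[0,1)$, and this persists after doubling: if $\alpha_\gamma(x) = x$ for $x$ over $t$, then $t+\gamma = t$, forcing $\gamma = 0$. These two facts make $\G_\Gamma$ a minimal, free, ample (indeed \'etale Hausdorff) groupoid, matching the hypotheses of the statement.

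The core of the argument is the identification $\mathsf{F}(\G_\Gamma) \cong IE(\Gamma)$. An element of the topological full group is a homeomorphism $\phi$ of $X$ that is locally implemented by the partial action; by compactness of $X$ there is a finite clopen partition $X = \bigsqcup_{i} U_i$ together with $\gamma_i \in \Gamma$ such that $\phi|_{U_i} = \alpha_{\gamma_i}$. After refinement each $U_i$ is a finite disjoint union of basic clopen sets, so over $[0,1)$ the map $\phi$ is a bijection that is a translation by some $\gamma_i \in \Gamma$ on each of finitely many half-open subintervals. Collapsing the doubled points recovers a right-continuous bijection $f$ of $[0,1]$ with $\{ft - t : t \in [0,1]\} \subset_{fin} \{\gamma_i\} \subset \Gamma$, i.e.\ an element of $IE(\Gamma)$; conversely every element of $IE(\Gamma)$, being piecewise a translation by elements of $\Gamma$ with breakpoints forced into $\Gamma$, lifts uniquely to such a $\phi$. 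I would then check that these assignments are mutually inverse and multiplicative, giving a group isomorphism.

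The main obstacle is the bookkeeping around the collapse map $X \to [0,1]$ and the doubled points: I must show the correspondence is a genuine bijection of groups in both directions. In one direction I need that every full bisection descends to a well-defined bijection of $[0,1]$, so the left and right copies of a breakpoint are sent consistently, which is precisely what forces the breakpoints of the resulting interval exchange to lie in $\Gamma$; in the other I need that each $f \in IE(\Gamma)$ lifts to an honest homeomorphism of $X$ rather than a mere bijection, where right-continuity together with the fact that \emph{both} the breakpoints and the translation lengths of $f$ lie in $\Gamma$ guarantees that $f$ permutes basic clopen sets. Once the dictionary between finite clopen partitions of $X$ and finite half-open interval decompositions of $[0,1)$ is set up cleanly, multiplicativity and inversion are routine, and the theorem follows.
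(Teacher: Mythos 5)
Your construction is essentially the paper's: your Stone space $X$ is exactly the paper's Cantor space $[0_+,1_-]$ (the paper realises it via the order topology on a doubled line $\mathbb{R}_\Gamma$ and then restricts the global translation action to a partial action, but the space, the partial action, the density arguments for minimality and freeness, and the collapse/section identification of $\mathsf{F}(\G_\Gamma)$ with $IE(\Gamma)$ are all the same).

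There is, however, one genuine gap, and it sits exactly at the step you single out as the main obstacle: the claim that for $f \in IE(\Gamma)$ the breakpoints are \emph{forced} into $\Gamma$. Under the paper's literal definition of $IE(\Gamma)$ --- only the angles $\{f(t)-t : t\in[0,1]\}$ are required to lie in $\Gamma$ --- this is false. Fix $x \notin \Gamma$ and $a,\epsilon \in \Gamma$ with $0<\epsilon<a$ and $x+a+\epsilon<1$, and let $f$ interchange $[x,x+\epsilon)$ with $[x+a,x+a+\epsilon)$ by the translations $\pm a$, fixing everything else. Then $\{f(t)-t\}=\{0,a,-a\}\subset\Gamma$, so $f$ belongs to the literally-defined $IE(\Gamma)$, yet its genuine discontinuities $x$, $x+\epsilon$, $x+a$, $x+a+\epsilon$ all lie in the coset $x+\Gamma$, hence outside $\Gamma$; such an $f$ does not permute your basic clopen sets and does not lift to a homeomorphism of $X$, since the point $x$ is not doubled. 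In fact the angles-only group is uncountable (one such $f$ for each $x$), whereas the topological full group of a second countable ample groupoid is countable, so surjectivity of your correspondence genuinely fails rather than merely being unproved. The repair is definitional: $IE(\Gamma)$ must be taken to consist of piecewise translations whose breakpoints \emph{and} angles lie in $\Gamma$ --- the standard convention in the sources the paper builds on, and plainly the intended reading --- and with that convention your lifting argument goes through verbatim. You are in good company here: the paper's own proof verifies that $\varphi = q(\cdot)q^*$ is a well-defined injective homomorphism into $IE(\Gamma)$ and then asserts it is an isomorphism, never addressing surjectivity, which is precisely where this issue lives.
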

Let $\G_\Gamma$ be the associated partial transformation groupoids. We give a systematic study of the family of groupoids $\G_\Gamma$, their reduced C*-algebras $C^*_r(\G_\Gamma)$, the groupoid homology $H_*(\G_\Gamma)$, and their topological full groups $\mathsf{F}(\G_\Gamma)=IE(\Gamma)$.
\begin{thmintro}[Lemma \ref{its classifiable}]
  $C^*_r(\G_\Gamma)$ is classifiable in the sense of the Elliott classification program. 
\end{thmintro}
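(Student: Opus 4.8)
The plan is to verify that $C^*_r(\G_\Gamma)$ satisfies each of the hypotheses of the Elliott classification theorem for simple, separable, nuclear C*-algebras. We recall that the relevant classification theorem (following the work of Elliott, Gong, Lin, Niu, Tikuisis, White, Winter and others) applies to C*-algebras that are unital (or stably unital), simple, separable, nuclear, $\mathcal{Z}$-stable, and satisfy the Universal Coefficient Theorem (UCT). Since $\G_\Gamma$ is an \'etale groupoid with totally disconnected (Cantor) unit space arising from a partial action of the countable group $\Gamma$, the strategy is to deduce each of these properties from structural features of the groupoid established in the first introductory theorem, namely that the partial action $\alpha:\Gamma \acts X$ is minimal and free.

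First I would record the easy properties. Separability is immediate since $\Gamma$ is countable and $X$ is a second-countable (Cantor) space, so $C^*_r(\G_\Gamma)$ is separable. Nuclearity and the UCT both follow once amenability of the groupoid is established: for an \'etale groupoid with amenable acting group $\Gamma$ (here $\Gamma \leq \mathbb{R}$ is abelian, hence amenable), the partial transformation groupoid is (measurewise/topologically) amenable, which gives nuclearity of $C^*_r(\G_\Gamma)$ and, via the Tu result that amenable groupoids satisfy the UCT, the UCT as well. This is where abelianness of $\Gamma$ does real work and is genuinely clean — no nonamenability subtleties of the kind appearing in Question \ref{question intro} intrude, because $\Gamma$ sits inside $\mathbb{R}$.

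Next I would establish simplicity and $\mathcal{Z}$-stability, which are the substantive points. Simplicity of $C^*_r(\G_\Gamma)$ follows from the standard criterion that the reduced C*-algebra of a second-countable, Hausdorff, \'etale, minimal, topologically principal (essentially principal) groupoid is simple; minimality is given, and freeness of the partial action yields that $\G_\Gamma$ is principal, hence topologically principal, so this criterion applies directly. For $\mathcal{Z}$-stability, the cleanest route is to show the groupoid has dynamic comparison or, more directly, to exhibit almost finiteness of $\G_\Gamma$ in the sense of Matui/Kerr: an almost finite, minimal, second-countable \'etale groupoid with Cantor unit space has $\mathcal{Z}$-stable (indeed classifiable) reduced C*-algebra by the work of Kerr and Kerr--Szab\'o. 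I would verify almost finiteness using the partial dynamics — producing, for any finite subset of $\Gamma$ and tolerance, a suitable castle/tower decomposition of $X$ with small remainder — exploiting the interval-exchange combinatorics that make the dynamics tractable.

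The main obstacle I anticipate is the verification of almost finiteness (equivalently, $\mathcal{Z}$-stability). The other hypotheses reduce to invoking known structural theorems once amenability, minimality, and freeness are in hand, and those three are either immediate or supplied by the preceding introductory theorem. Almost finiteness, by contrast, requires genuinely understanding the tower structure of the partial action and controlling the boundary pieces, so I would expect the bulk of the argument — and any technical care about the partial (as opposed to global) nature of the action — to concentrate there. A reasonable fallback, if a direct almost-finiteness argument proves awkward, is to establish $\mathcal{Z}$-stability through mean dimension zero / small boundary property estimates for the associated dynamics, but I expect the almost-finiteness approach to be the most transparent given the Cantor unit space.
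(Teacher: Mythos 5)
Your overall architecture coincides with the paper's proof of Lemma \ref{its classifiable}: separability from second countability, simplicity from minimality plus principality (Corollary \ref{groupoid regularity}), nuclearity from amenability of the groupoid (using that $\Gamma$ is abelian), the UCT from Tu's theorem \cite{tu1999conjecture}, and $\mathcal{Z}$-stability from almost finiteness via [\cite{kerr2020dimension}, Theorem 12.4]. The genuine divergence is in how almost finiteness is obtained, which is exactly the step you flag as the bulk of the work. You propose to build castle/tower decompositions for the partial action by hand; the paper never does this. Instead it uses the previously established fact (Lemma \ref{IE as a global action} together with the Matui--Rubin theorem) that the partial transformation groupoid $\Gamma \ltimes_\alpha [0_+,1_-]$ is isomorphic to the \emph{global} transformation groupoid of the free, minimal action $\hat{\alpha}$ of the countable amenable group $\Gamma/\mathbb{Z}$ on the Cantor space $[0_+,1_-]$, and then simply quotes [\cite{kerr2020almost}, Theorem C]: such actions are almost finite, the small boundary property being automatic in dimension zero since clopen sets have empty boundary. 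So the ``technical care about the partial nature of the action'' you anticipate is dissolved entirely by globalizing first; your fallback via the small boundary property is in fact closer to the paper's route than your primary plan, except that no SBP ``estimates'' are needed on a zero-dimensional space, and the one missing ingredient in your write-up is the passage from the partial action of $\Gamma$ to the global action of $\Gamma/\mathbb{Z}$, without which Kerr--Szab\'o (a theorem about group actions, not about partial actions or groupoids) cannot be invoked directly. Your direct tower construction could presumably be carried out, but it amounts to reproving a special case of Kerr--Szab\'o, whereas the globalization makes the whole lemma soft. A minor omission: the classification theorem as used (Theorem \ref{classification}) also requires unitality and infinite dimensionality; the former should be recorded as coming from compactness of the unit space.
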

This answers a question posed in \cite{xinlambda}, Section 6, where Li asked if $C^*_r(\G_\Gamma)$ was $\mathcal{Z}$-stable. 
The Elliott invariant is computed (see Lemma \ref{elliot invariant}).  Through Elliott classification, this identifies $C_r^*(\G_\Gamma)$ with concrete C*-algebras in certain cases (see Corollaries \ref{id with uhf gpd}, \ref{id with putnams cstar}. The Elliott invariant recovers $\Gamma$ as a subset of $\mathbb{R}$, so we obtain the following classification result for the groups $IE(\Gamma)$:
\begin{thmintro}[Classification of $IE(\Gamma)$]
(Theorem \ref{classification of the groups}) Let $\Gamma,\Gamma'$ be dense additive subgroups of $\mathbb{R}$. Then, the following are equivalent:
\begin{itemize}
    \item $IE(\Gamma) \cong IE(\Gamma')$ as abstract groups
    \item $\Gamma=\Gamma'$ as subsets of $\mathbb{R}$
\end{itemize}
\end{thmintro}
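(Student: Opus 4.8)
The plan is to prove the nontrivial direction, $IE(\Gamma)\cong IE(\Gamma')\Rightarrow\Gamma=\Gamma'$, by reading $\Gamma$ off from the abstract isomorphism type of $IE(\Gamma)$, passing successively through the groupoid $\G_\Gamma$, its reduced C$^*$-algebra, and finally the Elliott invariant. The reverse implication needs no work: if the two subsets of $\mathbb{R}$ coincide, then by definition $IE(\Gamma)$ and $IE(\Gamma')$ are literally the same group of piecewise-linear bijections.

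First I would invoke the realisation theorem above, giving $IE(\Gamma)=\mathsf{F}(\G_\Gamma)$ with $\G_\Gamma$ a \emph{minimal}, \emph{free} partial transformation groupoid on the Cantor space. Freeness makes $\G_\Gamma$ principal, hence essentially principal, and its unit space is totally disconnected and compact, so $\G_\Gamma$ is precisely the kind of minimal, essentially principal, ample groupoid to which the reconstruction machinery for topological full groups applies. The crucial step is then a rigidity/reconstruction theorem of Rubin–Matui type: for such groupoids an abstract isomorphism of topological full groups is spatially implemented, so
\[ IE(\Gamma)=\mathsf{F}(\G_\Gamma)\ \cong\ \mathsf{F}(\G_{\Gamma'})=IE(\Gamma') \quad\Longrightarrow\quad \G_\Gamma\ \cong\ \G_{\Gamma'} \]
as topological groupoids. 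If the cleanest available form of the theorem is stated for the commutator subgroups $\mathsf{D}(\mathsf{F}(\G_\Gamma))$, I would first observe that these are characteristic in $\mathsf{F}(\G_\Gamma)$ and simple by minimality, so the given isomorphism restricts to one between them and the commutator version suffices.

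Given the groupoid isomorphism $\G_\Gamma\cong\G_{\Gamma'}$, functoriality of the reduced C$^*$-algebra construction yields $C^*_r(\G_\Gamma)\cong C^*_r(\G_{\Gamma'})$, so their Elliott invariants agree. By the computation of the Elliott invariant (Lemma \ref{elliot invariant}), the ordered group $K_0$ together with the class of the unit, paired with the trace, recovers $\Gamma$ as an ordered subgroup of $\mathbb{R}$ with order unit $1$. Thus the isomorphism of invariants produces an order-isomorphism $\psi\colon\Gamma\to\Gamma'$ of dense subgroups of $\mathbb{R}$ with $\psi(1)=1$. To upgrade this to equality of \emph{subsets}, I would extend $\psi$ to $\bar\psi\colon\mathbb{R}\to\mathbb{R}$ by $\bar\psi(r)=\sup\{\psi(a):a\in\Gamma,\ a\le r\}$; density of $\Gamma$ and $\Gamma'$ makes $\bar\psi$ additive and increasing, hence linear, and $\bar\psi(1)=1$ forces $\bar\psi=\mathrm{id}$. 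Therefore $\psi$ is the inclusion and $\Gamma=\psi(\Gamma)=\Gamma'$.

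The main obstacle is the middle step: making the reconstruction rigorous for these partial-transformation groupoids, i.e. confirming that $\G_\Gamma$ meets every hypothesis of the reconstruction theorem (principality, minimality, ampleness over a compact totally disconnected unit space) and that it is the full group itself, rather than merely its commutator subgroup, whose isomorphism type is controlled — or else routing through the characteristic simple commutator subgroup. A secondary but genuine point is normalisation: the Elliott invariant a priori pins down $\Gamma$ only up to order-isomorphism, and it is exactly the presence of $1\in\Gamma$ (built into the definition of $IE(\Gamma)$) together with the rigidity of monotone additive maps on dense subgroups of $\mathbb{R}$ that forces equality of subsets rather than mere abstract isomorphism.
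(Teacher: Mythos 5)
Your proposal is correct and follows essentially the same route as the paper: the paper's Theorem \ref{classification of the groups} also passes from the abstract group isomorphism through the Matui--Rubin reconstruction theorem (Theorem \ref{matui isomorphism theorem}) to an isomorphism of groupoids, then of reduced C$^*$-algebras, and finally reads off $\Gamma$ from the Elliott invariant via the unique trace (Lemma \ref{elliot invariant}). Your extra normalisation argument (extending the order-isomorphism $\psi$ with $\psi(1)=1$ to a linear map on $\mathbb{R}$) simply makes explicit the step the paper delegates to Lemma \ref{elliot invariant}, namely that the tracial pairing pins down $\Gamma$ as a subset of $\mathbb{R}$ rather than up to abstract isomorphism.
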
 We remark that this is much stronger than saying $\Gamma \cong \Gamma'$ as abstract groups, for example, we see that $IE(2\pi \mathbb{Z} \oplus \mathbb{Z}) \not \cong IE(\pi \mathbb{Z} \oplus \mathbb{Z})$. Note that this classification in the case when $\Gamma$ is finitely generated can also be recovered as a Corollary of [\cite{bon2018rigidity}, Theorem 10.3]. \ \\ \ \\
We study the homology of $\G_\Gamma$. The groupoid homology of $\G_\Gamma$ is a shifted version of the group homology of $\Gamma$.
\begin{thmintro}(Lemma \ref{gpd hom})
  Let $\Gamma$ be a dense countable subgroup of $\mathbb{R}$, containing $1$. Then,  $H_*(\G_\Gamma)=H_{*+1}(\Gamma)$
\end{thmintro}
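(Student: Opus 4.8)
The plan is to replace the partial action by its enveloping (global) action, reduce groupoid homology to group homology with coefficients, and then read off the degree shift from a single short exact sequence of $\Gamma$-modules.

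First I would globalise. The partial translation action of $\Gamma$ on the Cantorified interval $X$ underlying $\G_\Gamma$ is the restriction of the genuine translation action $\Gamma \acts \overline{\mathbb{R}}$, where $\overline{\mathbb{R}}$ is the Stone space of the Boolean algebra of compactly supported step functions on $\mathbb{R}$ with breakpoints in $\Gamma$; write $\overline{\G}$ for the associated transformation groupoid. Here $X \subseteq \overline{\mathbb{R}}$ is the clopen set lying over $[0,1]$ and $\G_\Gamma = \overline{\G}|_X$. Because $1 \in \Gamma$ (so $\mathbb{Z} \subseteq \Gamma$), every $\Gamma$-orbit in $\overline{\mathbb{R}}$ meets $X$, so $X$ is $\overline{\G}$-full; by invariance of groupoid homology under restriction to a full clopen subset (equivalently, under groupoid equivalence) we get $H_*(\G_\Gamma) = H_*(\overline{\G})$. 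Since $\Gamma \acts \overline{\mathbb{R}}$ is a genuine global action, the standard identification of transformation-groupoid homology with group homology yields $H_n(\overline{\G}) \cong H_n(\Gamma; C_c(\overline{\mathbb{R}}, \mathbb{Z}))$, where $C_c(\overline{\mathbb{R}}, \mathbb{Z})$ is the $\Gamma$-module of compactly supported locally constant $\mathbb{Z}$-valued functions with its translation action.

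The crux is then a purely homological-algebra computation of $H_*(\Gamma; C_c(\overline{\mathbb{R}}, \mathbb{Z}))$, and this is where the degree shift appears. A compactly supported step function on $\overline{\mathbb{R}}$ is determined by its jumps, which form a finitely supported integer function on $\Gamma$ of total value $0$ (it starts and ends at $0$). This "jump" map is $\Gamma$-equivariant and gives a short exact sequence of $\Gamma$-modules
\[
0 \longrightarrow C_c(\overline{\mathbb{R}}, \mathbb{Z}) \longrightarrow \mathbb{Z}[\Gamma] \xrightarrow{\ \varepsilon\ } \mathbb{Z} \longrightarrow 0,
\]
where $\mathbb{Z}[\Gamma]$ carries the regular representation and $\varepsilon$ is the augmentation. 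I would then run the long exact sequence in $H_*(\Gamma; -)$. As $\mathbb{Z}[\Gamma]$ is free it is acyclic, i.e. $H_n(\Gamma; \mathbb{Z}[\Gamma]) = 0$ for $n \geq 1$ and $H_0(\Gamma; \mathbb{Z}[\Gamma]) = \mathbb{Z}$, so the connecting maps are isomorphisms $H_{n-1}(\Gamma; C_c(\overline{\mathbb{R}}, \mathbb{Z})) \cong H_n(\Gamma; \mathbb{Z})$ for all $n \geq 2$. Degrees $0$ and $1$ are checked from the tail of the sequence: the map $H_0(\Gamma;\mathbb{Z}[\Gamma]) \to H_0(\Gamma;\mathbb{Z})$ is the identity on $\mathbb{Z}$, which forces $H_0(\Gamma; C_c(\overline{\mathbb{R}},\mathbb{Z})) \cong H_1(\Gamma;\mathbb{Z}) = \Gamma$. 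Combining, $H_m(\Gamma; C_c(\overline{\mathbb{R}}, \mathbb{Z})) \cong H_{m+1}(\Gamma)$ for every $m \geq 0$, which is exactly the claimed shift $H_*(\G_\Gamma) = H_{*+1}(\Gamma)$.

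The main obstacle I anticipate is the first step: precisely identifying $\G_\Gamma$ as the restriction of the global groupoid $\overline{\G}$ to the full clopen set $X$, and checking that passing from the partial to the enveloping action does not alter the homology. Once the enveloping action is in hand, the coefficient module $C_c(\overline{\mathbb{R}}, \mathbb{Z})$ is a genuine $\mathbb{Z}[\Gamma]$-module and the short exact sequence above does all the work; the remaining verifications (equivariance and exactness of the jump map, acyclicity of the free module, and the degree-$0$ bookkeeping) are routine. It is worth noting that the same argument computes homology with arbitrary coefficients, and that the appearance of $\mathbb{Z}[\Gamma]$ — rather than a more complicated induced module — is precisely what makes the outcome a clean one-step shift of $H_*(\Gamma)$.
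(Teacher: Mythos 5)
Your proposal is correct and follows essentially the same route as the paper: globalise the partial action, reduce to $H_*(\Gamma; C_c(\mathbb{R}_\Gamma,\mathbb{Z}))$, and extract the degree shift from a short exact sequence with free middle term $\mathbb{Z}\Gamma$ via the long exact sequence. Your jump map identifying $C_c(\mathbb{R}_\Gamma,\mathbb{Z})$ with the augmentation ideal of $\mathbb{Z}[\Gamma]$ is exactly the same extension the paper builds geometrically, where the middle term is realised as $C_c(\hat{\mathbb{R}}_\Gamma,\mathbb{Z})$ for the one-sided compactification, with free $\mathbb{Z}$-basis $\{1_{[a_+,\infty]} : a \in \Gamma\}$ and the quotient map given by evaluation at $\infty$.
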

The key point of inspiration here is the computation of homology for groupoids in work by Li \cite{xinlambda}. 
In Theorem \ref{hk conjecture}, Matuis HK Conjecture is verified directly for $\G_\Gamma$ i.e. it is shown  there are isomorphisms: $$K_0(C^*_r(\G_\Gamma))\cong\bigoplus_{i=1}^\infty H_{2i-1}(\Gamma) \quad K_1(C^*_r(\G_\Gamma)) \cong \bigoplus_{i=1}^\infty H_{2i}(\Gamma)$$ 
We also use the framework of topological full groups to obtain homological information about $IE(\Gamma)$ in terms of the groupoid homology of $\G_\Gamma$. For example, 
Matuis AH conjecture was recently confirmed for a broad class of groupoids containing all of the groupoids $\G_\Gamma$ by Li in [\cite{li2022}, Corollary E]. This gives us the first homology group of $IE(\Gamma)$ in terms of the homology of $\Gamma$. 
\begin{thmintro}[AH Exact Sequence] \label{ah exact sequence}
    (Lemma \ref{ah conj}) There exists a long exact sequence:
    $$H_2(\mathsf{D}(IE(\Gamma)) \rightarrow H_3(\Gamma) \rightarrow \Gamma \otimes \mathbb{Z}_2 \rightarrow IE(\Gamma)_{ab}\rightarrow H_2(\Gamma) \rightarrow 0   $$ 
\end{thmintro}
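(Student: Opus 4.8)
The plan is to combine Matui's AH conjecture — confirmed for the class containing $\G_\Gamma$ by Li in [\cite{li2022}, Corollary E] — with the homology computation $H_*(\G_\Gamma) = H_{*+1}(\Gamma)$ from Lemma \ref{gpd hom}. Matui's AH sequence, in the form established by Li, is a six-term (or in the relevant truncation, long) exact sequence relating the low-degree groupoid homology of an ample groupoid $\G$ to the abelianization of its topological full group. Schematically it reads
\begin{equation*}
H_1(\G) \otimes \mathbb{Z}_2 \longrightarrow \mathsf{F}(\G)_{\mathrm{ab}} \longrightarrow H_0(\G) \longrightarrow 0,
\end{equation*}
with an extension on the left involving $H_2(\G)$ and the homology $H_2$ of the commutator subgroup $\mathsf{D}(\mathsf{F}(\G))$. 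So the first step is simply to write down Li's exact sequence precisely for $\G = \G_\Gamma$, being careful about the exact indexing conventions (Li and Matui sometimes differ by which $H_n$ carries the $\otimes \mathbb{Z}_2$) and about the hypotheses — minimality, freeness, essential principality, and whatever countability/second-countability assumptions Corollary E requires, all of which hold for $\G_\Gamma$ by the structural results of the earlier sections (the action is minimal, free, and on the Cantor space).

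The second step is purely a substitution. Since $\mathsf{F}(\G_\Gamma) = IE(\Gamma)$, the full group term is $IE(\Gamma)_{\mathrm{ab}}$ and the commutator term is $\mathsf{D}(IE(\Gamma))$. Applying Lemma \ref{gpd hom}, each $H_n(\G_\Gamma)$ becomes $H_{n+1}(\Gamma)$: thus $H_0(\G_\Gamma) = H_1(\Gamma) = \Gamma$ (the abelianization of the abelian group $\Gamma$ is $\Gamma$ itself), $H_1(\G_\Gamma) = H_2(\Gamma)$, and $H_2(\G_\Gamma) = H_3(\Gamma)$. Substituting these into Li's sequence should produce exactly
\begin{equation*}
H_2(\mathsf{D}(IE(\Gamma))) \rightarrow H_3(\Gamma) \rightarrow \Gamma \otimes \mathbb{Z}_2 \rightarrow IE(\Gamma)_{\mathrm{ab}} \rightarrow H_2(\Gamma) \rightarrow 0.
\end{equation*}
Here I would need to reconcile the $\otimes \mathbb{Z}_2$ placement: in the displayed target it sits on $\Gamma = H_0(\G_\Gamma)$, so the version of Li's sequence I quote must be the one where the degree-zero homology is tensored with $\mathbb{Z}_2$ (reflecting the index-two ambiguity in the index map), while the map onto $\mathsf{F}(\G)_{\mathrm{ab}}$ comes with cokernel $H_1(\G) = H_2(\Gamma)$.

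The main obstacle I expect is not any new mathematics but rather the bookkeeping of normalizations and conventions. Li's framework is stated at a level of generality (with index maps, the role of the $\mathbb{Z}_2$ twist, and the interplay between $H_*(\G)$ and $H_*(\G \times \mathbb{Z}/2)$ or similar) where it is easy to be off by a shift or to misattribute which homology group receives the torsion tensor factor. I would therefore devote the most care to matching Li's Corollary E verbatim and checking that the connecting map $H_3(\Gamma) \to \Gamma \otimes \mathbb{Z}_2$ and the surjection onto $H_2(\Gamma)$ are the images of Li's maps under the identification of Lemma \ref{gpd hom}. A secondary point worth verifying is that $\G_\Gamma$ genuinely satisfies every hypothesis of Corollary E — in particular that it is not merely minimal and free but also meets any comparison or almost-finiteness condition Li imposes; these should already be available from the classifiability result (Lemma \ref{its classifiable}) and the minimal-free structure established in the earlier theorems, so the verification is a matter of citing the right earlier lemmas rather than proving anything afresh.
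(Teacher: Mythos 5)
Your proposal is correct and follows exactly the paper's route: the paper's proof of Lemma \ref{ah conj} is precisely to note that $\Gamma \ltimes_\alpha [0_+,1_-]$ is a minimal, almost finite ample groupoid with Cantor unit space (so Li's Corollary E applies), and then substitute $H_k(\G_\Gamma) \cong H_{k+1}(\Gamma)$ from Lemma \ref{gpd hom} together with $\mathsf{F}(\G_\Gamma) \cong IE(\Gamma)$. Note only that your initial schematic display has the indices reversed — the AH sequence reads $H_0(\G)\otimes\mathbb{Z}_2 \rightarrow \mathsf{F}(\G)_{ab} \rightarrow H_1(\G) \rightarrow 0$, not the other way around — but your subsequent reconciliation paragraph correctly fixes this, landing on the same statement as the paper.
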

From this concrete picture of the abelianisation we can say more about finite generatedness of $IE(\Gamma)$. 
\begin{thmintro}(Theorem \ref{ie finitely generated})
   Let $\Gamma$ be a dense countable subgroup of $\mathbb{R}$ containing 1. The following are equivalent:
   \begin{enumerate}

       \item $\Gamma$ is finitely generated.
       \item $D(IE(\Gamma))$ is finitely generated. 
       \item $IE(\Gamma)$ is finitely generated. 
   \end{enumerate}
\end{thmintro}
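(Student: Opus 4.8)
The plan is to prove the cycle $(1)\Rightarrow(2)\Rightarrow(3)\Rightarrow(1)$, where the two implications landing on finite generation of $\Gamma$ rest on a single elementary observation about translation data, and the implication $(1)\Rightarrow(2)$ carries the real weight. The observation is this \emph{translation cocycle}: for $f\in IE(\Gamma)$ the finite set $\{ft-t:t\in[0,1]\}\subset\Gamma$ consists of the \emph{translation values} of $f$. For a subgroup $H\le\Gamma$ let $IE_H$ be the set of $f$ all of whose translation values lie in $H$. The identity $(fg)t-t=(f(gt)-gt)+(gt-t)$ shows a translation value of $fg$ is a sum of one of $f$ and one of $g$, and those of $f^{-1}$ are negatives of those of $f$; hence $IE_H$ is a subgroup of $IE(\Gamma)$. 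I would also record that $\Gamma$ is generated by $1$ together with $\Gamma\cap(0,1)$ (reduce mod $1$), and indeed by $\Gamma\cap(0,1/4)$, since for a fixed $\eta\in\Gamma\cap(0,1/4)$ every positive $\delta\in\Gamma$ can be written $\delta=(\delta-k\eta)+k\eta$ with $\delta-k\eta\in(0,\eta]$.

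For $(3)\Rightarrow(1)$, suppose $IE(\Gamma)=\langle f_1,\dots,f_k\rangle$ and let $\Gamma_0\le\Gamma$ be the finitely generated subgroup generated by all translation values of the $f_i$. Then $IE(\Gamma)=\langle f_1,\dots,f_k\rangle\subseteq IE_{\Gamma_0}$, so in fact \emph{every} element of $IE(\Gamma)$ has all translation values in $\Gamma_0$. For each $\gamma\in\Gamma\cap(0,1)$ the two-interval exchange $r_\gamma$ swapping $[0,1-\gamma)$ and $[1-\gamma,1)$ lies in $IE(\Gamma)$ with translation values $\gamma$ and $\gamma-1$; hence $\gamma,\gamma-1\in\Gamma_0$, so $1\in\Gamma_0$ and $\Gamma\cap(0,1)\subseteq\Gamma_0$. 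Therefore $\Gamma=\Gamma_0$ is finitely generated.

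The same device proves $(2)\Rightarrow(1)$, which I then use to deduce $(2)\Rightarrow(3)$. If $D(IE(\Gamma))=\langle h_1,\dots,h_m\rangle$, set $\Gamma_0=\langle\text{translation values of the }h_i\rangle$, so $D(IE(\Gamma))\subseteq IE_{\Gamma_0}$. Given $\gamma\in\Gamma\cap(0,1/4)$, let $g_\gamma$ exchange $[0,\gamma)$ with $[\gamma,2\gamma)$, supported in $U=[0,2\gamma)$, and let $h$ be the involution of $IE(\Gamma)$ swapping $U$ with $W=[\beta,\beta+2\gamma)$ for some $\beta\in\Gamma\cap[2\gamma,1-2\gamma)$ (nonempty by density). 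A direct check gives $[g_\gamma,h]\!\restriction_U=g_\gamma\!\restriction_U$, so $\gamma$ is a translation value of the commutator $[g_\gamma,h]\in D(IE(\Gamma))\subseteq IE_{\Gamma_0}$; thus $\Gamma\cap(0,1/4)\subseteq\Gamma_0$ and $\Gamma=\Gamma_0$ is finitely generated. Now for $(2)\Rightarrow(3)$: having $\Gamma$ finitely generated makes $\Gamma\otimes\mathbb{Z}_2$ finite and $H_2(\Gamma)$ finitely generated, so the AH exact sequence of Theorem \ref{ah exact sequence} exhibits $IE(\Gamma)_{ab}$ as an extension of the finitely generated group $H_2(\Gamma)$ by the finite image of $\Gamma\otimes\mathbb{Z}_2$, hence finitely generated. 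Then $IE(\Gamma)$ is an extension of the finitely generated group $IE(\Gamma)_{ab}$ by the finitely generated group $D(IE(\Gamma))$, so it is finitely generated.

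The hard part will be $(1)\Rightarrow(2)$: producing a \emph{finite} generating set for $D(IE(\Gamma))$ when $\Gamma\cong\mathbb{Z}^n$. I stress that this cannot be formal: even though $IE(\Gamma)$ and $IE(\Gamma)_{ab}$ are finitely generated and $D(IE(\Gamma))$ is simple (being the derived subgroup of the topological full group of the minimal groupoid $\G_\Gamma$) and normally generated by finitely many commutators, a simple, finitely normally generated group need not be finitely generated (witness the finitary alternating group). I would therefore argue directly and constructively: writing $\Gamma=\mathbb{Z}+\mathbb{Z}\lambda_1+\dots+\mathbb{Z}\lambda_{n-1}$, fix a finite partition of $[0,1]$ adapted to the generators $\lambda_i$, reduce an arbitrary element of $D(IE(\Gamma))$ — after correcting its translation data by the commutators built above — to a product of a bounded number of exchanges supported on the blocks of this partition, and verify that these finitely many building blocks generate $D(IE(\Gamma))$. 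This is precisely the explicit generating set constructed for the case $\Gamma\cong\mathbb{Z}^n$ elsewhere in the paper, and it is where all the combinatorial effort is concentrated.
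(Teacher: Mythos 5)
Your cycle has a genuine gap at $(1)\Rightarrow(2)$, precisely the implication you flag as carrying all the weight: what you offer there is a plan (``fix a partition adapted to the generators, correct translation data by commutators, reduce to boundedly many block exchanges'') rather than an argument, and nothing in the proposal executes it. Deferring to the explicit generating sets constructed elsewhere in the paper does not close the gap either, for two reasons. First, those constructions (Theorem \ref{generating set theorem}, Theorem \ref{generating set k}) require $\Gamma/\mathbb{Z}\cong\mathbb{Z}^d$ or $\mathbb{Z}^d\oplus\mathbb{Z}_k$ with $d>1$ (and $k>9$), so they exclude, for instance, $\Gamma=\mathbb{Z}\oplus\lambda\mathbb{Z}$ --- the most classical case --- as well as cases with small torsion in $\Gamma/\mathbb{Z}$. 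Second, your normal form $\Gamma=\mathbb{Z}+\mathbb{Z}\lambda_1+\dots+\mathbb{Z}\lambda_{n-1}$ silently assumes that $1$ can be completed to a basis of $\Gamma$, which fails, e.g., for $\Gamma=\tfrac12\mathbb{Z}+\pi\mathbb{Z}$. The idea missing from your proposal is that this implication is soft and does not require exhibiting generators at all: the action $\hat{\alpha}:\Gamma/\mathbb{Z}\acts[0_+,1_-]$ is expansive (Lemma \ref{expansive action IE}), so when $\Gamma$ (equivalently $\Gamma/\mathbb{Z}$) is finitely generated the groupoid $\G_\Gamma$ is expansive in the sense of Definition \ref{expansive groupoid}, Nekrashevych's theorem (Theorem \ref{finite generatedness theorem}) then gives finite generation of $\mathsf{A}(\G_\Gamma)$, and almost finiteness plus minimality identify $\mathsf{A}(\G_\Gamma)$ with $\mathsf{D}(IE(\Gamma))$ (Theorem \ref{d simple}, Corollary \ref{derived ie is simple}). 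This is the content of Corollary \ref{a finitely generated iff} / Corollary \ref{finitely generated iff}, which the paper's proof invokes; the concrete generating sets of Section \ref{finiteness properties section} are a separate, harder refinement valid only under extra hypotheses.

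The remaining implications in your proposal are correct, and one is a genuine improvement. Your $(3)\Rightarrow(1)$ is the paper's elementary ``angles generate'' argument, done slightly more carefully: you exhibit the two-interval exchange $r_\gamma$ realizing each $\gamma\in\Gamma\cap(0,1)$ as an angle, a point the paper leaves implicit in the inclusion $\Gamma\subset\mathbb{Z}A$. Your $(2)\Rightarrow(3)$ (AH exact sequence of Lemma \ref{ah conj}, then extension of a finitely generated group by a finitely generated group) is exactly the paper's. Your $(2)\Rightarrow(1)$, however, takes a genuinely different route: the commutator $[g_\gamma,h]$, which agrees with $g_\gamma$ on $U$ and hence realizes every $\gamma\in\Gamma\cap(0,1/4)$ as an angle of an element of $\mathsf{D}(IE(\Gamma))$, replaces the paper's groupoid-theoretic argument (finite generation of $\mathsf{A}(\G)$ implies compact generation of $\G$, Lemma \ref{a finitely generated implies}, combined with Lemma \ref{compactly generated iff}); your version is more elementary and avoids groupoid machinery entirely, and the computation checks out. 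But until $(1)\Rightarrow(2)$ is actually proved, what you have is only $(2)\Rightarrow(3)\Rightarrow(1)$ together with $(2)\Rightarrow(1)$, not the stated equivalence.
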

1. $\implies$ 2. follows by results of Matui and Nekrashevych, and was observed historically \cite{extensiveamenability} \cite{bon2018rigidity}. 2. $\implies$ 1. is shown to be general behavior (Corollary \ref{a finitely generated iff} formalises this). 2. $\implies$ 3. is a consequence of our results in homology, in particular, the long exact sequence seen in Theorem \ref{ah exact sequence}. 3. $\implies$ 1. is an elementary observation presumably known to experts. 
\ \\ \ \\
We also apply [\cite{li2022}, Corollary C] to describe the rational homology of $IE(\Gamma)$ and $ D(IE(\Gamma))$ in terms of the rational homology of $\Gamma$ (See Lemma \ref{rational homology}).
This summarises our general results, but we can be more precise for restricted example classes.  
\ \\ \ \\
We study the case when $\Gamma \subset \mathbb{Q}$ in Subsection \ref{subs rational}. The corresponding class of groupoids $\G_\Gamma$ are conjugate to the canonical AF groupoid models of UHF algebras in this case (Corollary \ref{id with uhf gpd}).  From this, we find an explicit infinite presentation of $IE(\Gamma)$ as the inductive limit of finite symmetric groups (Lemma \ref{ generating set when rational }). In this case, we show $IE(\Gamma)$ and $ D(IE(\Gamma))$ are rationally acyclic. We also compute the abelianisation to be $\mathbb{Z}_2$, finding an explicit short exact sequence $D(IE(\Gamma)) \hookrightarrow IE(\Gamma) \xrightarrow{sgn} \mathbb{Z}_2$ where $sgn$ is an analogue of the usual sign homomorphism $S_n \rightarrow \mathbb{Z}_2$. 
\ \\ \ \\
In Subsection \ref{subs gamma a ring}, we  study the case when $\Gamma=\mathbb{Z}[\lambda,\lambda^{-1}]$, taking the viewpoint that $IE(\Gamma)$ is the Lebesgue-measure-preserving subgroup of an irrational slope Thompson's group $V_\lambda$ on the interval, as studied in \cite{stein1992groups} \cite{burillo_nucinkis_reeves_2022} \cite{cleary2000regular}. In this case, we obtain a concrete generating set for $IE(\Gamma)$ (Lemma \ref{generating set ring}) and study the homology of $IE(\Gamma)$. 
\ \\ \ \\
The main result of Section \ref{finiteness properties section} is to find an explicit finite generating set of such $D(IE(\Gamma))$. 
\begin{thmintro}(Theorem \ref{generating set k} (See \cite{chornyi2020topological}, Proposition 8))
Let $\Gamma \cong \mathbb{Z}^{d+1}$ be a dense additive subgroup of $\mathbb{R}$ such that $1 \in \Gamma$. Then we have that $\Gamma/\mathbb{Z} \cong \mathbb{Z}^{d} \oplus \mathbb{Z}_k$. Let $k>9$ and $ d>1$. Then we describe a concrete generating set $S$ of $D(IE(\Gamma))$ such that $|S|=2d+4$. 
\end{thmintro}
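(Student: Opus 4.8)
Since $\G_\Gamma$ is minimal, free and has Cantor unit space, the structure theory of Matui and Nekrashevych identifies $D(IE(\Gamma)) = D(\mathsf{F}(\G_\Gamma))$ with the \emph{alternating full group}: it is simple and is generated by the $3$-cycles supported on triples of pairwise disjoint clopen bisections. The plan is therefore to exhibit $2d+4$ elements which visibly lie in $D(IE(\Gamma))$, and then to prove that, using only conjugation and commutators \emph{inside} the subgroup they generate, one already recovers every such $3$-cycle. This reduces the theorem to a finite reduction argument and a concrete commutator calculus, following the blueprint of [\cite{chornyi2020topological}, Proposition 8] transported from $\mathbb{Z}^d$-actions to the groupoid $\G_\Gamma$.

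First I would fix a model compatible with the hypothesis $\Gamma/\mathbb{Z} \cong \mathbb{Z}^d \oplus \mathbb{Z}_k$. Writing $\Gamma = \langle \tfrac1k, \lambda_1, \dots, \lambda_d \rangle$, I view $IE(\Gamma)$ as acting on $[0,1]$ with its standard Cantorisation, the $\tfrac1k$-generator realising the $k$-cycle that cyclically permutes the $k$ base arcs $[\tfrac{j}{k}, \tfrac{j+1}{k})$, and the free directions $\lambda_1,\dots,\lambda_d$ acting by partial irrational rotations. The proposed generating set then splits as follows: for each of the $d+1$ generators $\gamma \in \{\tfrac1k, \lambda_1, \dots, \lambda_d\}$ of $\Gamma/\mathbb{Z}$ a \emph{translation element} $a_\gamma \in IE(\Gamma)$ realising the partial translation by $\gamma$ (a two-interval exchange for the free directions, the $k$-cycle for $\tfrac1k$); for each such $\gamma$ a \emph{companion transposition} $t_\gamma$ swapping two adjacent clopen pieces in the $\gamma$-direction; and two further elements used to correct the abelianisation and to seed the spreading argument. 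This yields $2(d+1)+2 = 2d+4$ elements, and the exact count is pinned down precisely by the bookkeeping mirroring [\cite{chornyi2020topological}, Proposition 8].

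The technical core is the conjugation/commutator calculus. I would show that conjugating a single seed transposition by words in the $a_\gamma$ sweeps its support across a cofinal family of clopen bisections, so that $\langle S\rangle$ contains every sufficiently small transposition; commutators of suitably overlapping transpositions then produce $3$-cycles, and minimality of $\G_\Gamma$ transports a single $3$-cycle to all of $X$, giving the whole alternating full group $D(IE(\Gamma))$. To see that each proposed generator genuinely lies in $D(IE(\Gamma))$, rather than merely in $IE(\Gamma)$, I would use the explicit description of $IE(\Gamma)_{ab}$ coming from the AH exact sequence of Theorem \ref{ah exact sequence}, pre-composing the $a_\gamma$ with companion transpositions to annihilate their image in the abelianisation; this is exactly where the two extra correcting elements are spent.

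The main obstacle is the finite reduction: proving that these particular $2d+4$ elements already suffice, rather than the a priori unbounded family of transpositions that obviously generates, which forces one to verify concrete commutator identities spreading a bounded seed across the arc-boundary identifications without enlarging $S$. This is precisely where the numerical hypotheses enter. I expect $d>1$ to be needed so that there are at least two independent free directions, allowing transpositions in distinct directions to be commuted into genuine $3$-cycles and letting the spreading argument close up; and $k>9$ to be needed so that the finite symmetric/alternating data carried by the $k$ base arcs is large enough for the relevant $3$-cycle generation and sign bookkeeping to go through (so that the local alternating groups are simple and reachable from the few retained transpositions). The delicate point will be confirming that the conjugates of the chosen seed transpositions reach \emph{across} the cyclic $\mathbb{Z}_k$-identifications, which is where the lower bound on $k$ does its work.
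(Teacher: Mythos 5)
There is a genuine gap: your proposal defers its entire technical core and never supplies it. Everything after ``the technical core is the conjugation/commutator calculus'' is a list of things you \emph{would} show, and that spreading argument (conjugates of a seed transposition sweep across a cofinal family of bisections, commutators produce $3$-cycles, minimality transports them) is precisely the content of \cite{chornyi2020topological}, Proposition 8 --- it is the hard part, not a routine verification. The paper does not re-prove it; instead it constructs an explicit conjugacy between $\Gamma/\mathbb{Z}\ltimes[0_+,1_-]$ and a Sturmian-type subshift of $\{0,1\}^{\Gamma/\mathbb{Z}}$ (Lemma \ref{IE as a subshift}) and then quotes Theorem \ref{nekthm} as a black box: $D$ of the topological full group of such a subshift is generated by \emph{all} well-defined patch transformations $T_{\pi}$ on patches $\{0,e_i,-e_i\}$. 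The actual work in the paper, for which your proposal has no counterpart, is elementary interval arithmetic (Lemmas \ref{prescribed generating set}, \ref{intervals lemma}, \ref{only cylinders of interest}): after arranging the generators of $\Gamma/\mathbb{Z}$ to be $1/k$ together with irrationals $2/5<\lambda_1<\dots<\lambda_d<1/2$, one checks that exactly two patches per irrational direction and exactly four patches in the $1/k$-direction yield well-defined $T_\pi$; these survivors are the $\sigma_i,\hat{\sigma}_i$ and the four $r_{k,a}$, and the count $2d+4$ is \emph{forced} by Theorem \ref{nekthm}, not engineered. Your decomposition $2(d+1)+2$ (translations, companion transpositions, two ``correctors'') is bookkeeping reverse-fitted to the target number, with no argument that this particular set generates.

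Two further concrete problems. First, most of your proposed generators are not elements of $D(IE(\Gamma))$ at all: a partial translation $a_\gamma$ and a single transposition $t_\gamma$ are elements of the form $\gamma_B$, and by the AH sequence (Lemma \ref{ah conj}) such elements generically have nontrivial class in $IE(\Gamma)_{ab}$ --- the transpositions hit $j([1_{s(B)}]\otimes 1)\in \mathrm{im}(j)$, and the translations by $\lambda_i$ are not even killed by $I$ (for irrational rotations the index map sends the rotation to a generator of the $\mathbb{Z}$-part of the abelianisation). The paper's generators avoid this issue because every one of them is an order-three cyclic exchange of three disjoint intervals, hence a commutator $[\gamma_{B_1},\gamma_{B_2}]$ lying in $\mathsf{A}(\G)=D(IE(\Gamma))$ (Corollary \ref{derived ie is simple}). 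Your fix --- ``pre-compose with companion transpositions to annihilate the abelianisation class'' --- is not a construction: after any such correction the generation claim must be re-proved, which is again the undone core. Second, the roles you guess for the numerical hypotheses are not the actual ones: $d>1$ is needed in Lemma \ref{prescribed generating set} so that density of $\mathbb{Z}+\lambda\mathbb{Z}$ allows \emph{every} irrational generator to be moved into the window $(2/5,1/2)$ (with a single irrational generator there is no room to adjust it), and $k>9$ is what makes the cylinder-set computations in the $1/k$-direction close up ($W_{\pi_{1,0,0}}=W_{\pi_{0,1,1}}=\emptyset$, while $T_{\pi_{1,1,1}}$ and $T_{\pi_{0,0,0}}$ fail to be well defined), leaving exactly four surviving patches. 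Neither hypothesis has anything to do with simplicity or ``reachability'' of local alternating data ($A_n$ is already simple for $n\geq 5$), so the part of your argument meant to explain where the hypotheses enter would not go through as described.
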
 
The proof of this Theorem is to describe an explicit subshift of $\{0,1\}^{\Gamma/\mathbb{Z}}$ realising $IE(\Gamma)$ as a topological full group and then apply the main result of \cite{chornyi2020topological}. The specific description of generators can be found in Theorem \ref{generating set k} and Theorem \ref{generating set theorem}. In particular, we have a concrete generating set $S$ with four elements for the simple group $D(IE( \mathbb{Z} \oplus \lambda_1 \mathbb{Z} \oplus \lambda_2 \mathbb{Z}))$ where $\lambda_1,\lambda_2$ are rationally independent  (see Example \ref{concrete rank 2 generating set}). This group is also known to be amenable by the main result of \cite{extensiveamenability}. We believe this is the first concrete finite generating set of a simple, finitely generated amenable group. 
\ \\ \ \\
In Subsection \ref{subs polycyclic}, we obtain homological information about $IE(\Gamma)$ for the case when $\Gamma \cong \mathbb{Z}^{d}$. We show $D(IE(\Gamma))$ is rationally acyclic iff $d=2$, and compute the abelianisation explicitly for the cases $d=2,3$: $$\Gamma \cong \mathbb{Z}^{d} \Rightarrow IE(\Gamma)_{ab}=\begin{cases} \mathbb{Z} \oplus \mathbb{Z}_2^2 & d=2 \ \\
\mathbb{Z} \oplus \mathbb{Z}_2^3 & d=3 \ \\
 \end{cases}$$
\ \\ 
\textbf{Acknowledgements:} The author would like to thank James Belk, Xin Li, and Alistair Miller for useful comments. The author has received funding from the European Research Council (ERC) under the European Union’s Horizon 2020 research and innovation programme (grant agreement No.
817597). This paper forms part of the PhD Thesis of the author. 
\section{Preliminaries}

\subsection{Groupoids and Partial Actions}
A \textit{groupoid} is a small category of isomorphisms. This is a set $\G$ with partially defined multiplication $\gamma_1 \gamma_2$ and everywhere defined involutive operation $\gamma \mapsto \gamma ^{-1}$, satisfying:
\begin{enumerate}
    \item Associativity: If $\gamma_1\gamma_2$ and $(\gamma_1\gamma_2)\gamma_3$ are defined, then $\gamma_2\gamma_3$ is defined and $(\gamma_1\gamma_2)\gamma_3=\gamma_1(\gamma_2\gamma_3)$
    \item Existence of $r,s$: The range and source maps $r(\gamma)=\gamma\gamma^{-1}$, $s(\gamma)=\gamma^{-1}\gamma$ are always well defined. If the product $\gamma_1\gamma_2$ is defined, then $\gamma_1=\gamma_1\gamma_2 \gamma_2^{-1}$, $\gamma_2=\gamma_1^{-1} \gamma_1 \gamma_2$. 
\end{enumerate}
A \textit{topological groupoid} is a groupoid endowed with a topology such that the multiplication and inverse operations are continious.

Elements of the form $\gamma\gamma^{-1}$ are called units and the space of units is denoted $\G^{(0)}$. The space of composable pairs is denoted by $\Gn{2}$. Given two subsets $B_1,B_2 \subset \G$, we define their product to be $B_1 B_2:=\Set{\gamma_1\gamma_2 \; : \; \gamma_i \in B_i, \; (\gamma_1,\gamma_2) \in \Gn{2}} $ \ \\
Units $u_1,u_2 \in \Gn{0}$ belong to the same $\G$\textit{-orbit} if there exists $\gamma \in \G$ such that $s(\gamma)=u_1, \; r(\gamma)=u_2$, and the orbit of $u$ is denoted $\G(u)$. If $\G(u)$ is dense in $\Gn{0}$ for all $u \in \Gn{0}$ we call the groupoid \textit{minimal}. \ \\
The \textit{isotropy group}  (denoted $\G_u$) of a unit $u \in \Gn{0}$ is the group $\Set{\gamma \in \G \; : \; s(\gamma)=r(\gamma)=u}$, if we have that every isotropy group is trivial, i.e. $\G_u=\Set{u}$ one says that the groupoid is \textit{principal}. A weaker condition is \textit{essentially principal} which means that such units are dense in the unit space i.e. $\overline{ \Set{u \in \Gn{0} \; : \; \G_u= \Set{u}}}=\Gn{0}$. This is related to effectiveness. $\G$ is said to be a \textit{effective} if for all $g \in \G \setminus \Gn{0}$ and all $B \in \mathcal{B}^k$ containing $g$, there exists some $g \in B$ such that $s(g) \neq r(g)$. In the case where $\G$ is second countable and Hausdorff, being effective is equivalent to essentially principal.  We now give one of our key examples for this paper- a \textit{transformation groupoid}. 
\ \\
Let $\Gamma \acts X$ be a (countable) group acting by homeomorphisms on a topological space $X$. Then we define the \textit{transformation groupoid} $\Gamma \ltimes X$ to be the set of pairs $(g,x) \in \Gamma \times X$ here composable pairs are of the form $(g,h(x))(h,x)$ and composition is given by $(g,h(x))(h,x)=(gh,x)$. Here $s(g,x)=(1,x), \; r(g,x)=(1,g(x))$ and the unit space is canonically identified with $X$. A basis of the topology is given by $(g.U)$ where $g \in \Gamma$. and $U$ is open in $X$. 

Another construction we study in this paper is a subtly different groupoid; the restriction of a transformation groupoid to a partial action. Here, let $Y \subset X$ be an open subset such that every orbit $\Gamma x \; x \in X$ has nontrivial intersection with $Y$.  Then let us define the subgroupoid
$$\Gamma \ltimes X |_Y^Y:=\{ (\gamma,x) \in \Gamma \ltimes Y \; : \; x, \gamma(x) \in Y \} $$
This forms a subgroupoid of the transformation groupoid. We call this a partial action $\alpha: \Gamma \acts Y$ on $Y$ and denote the associated groupoid $\Gamma \ltimes_\alpha Y$. 

We have many properties of the actions $\alpha$ translate into properties of the (partial) transformation groupoid:
\begin{itemize}
\item If the action is\textit{ free} (i.e. for all $g \in \Gamma, x \in X$ $gx=x \implies g=1$ ) iff the groupoid is principal. 
\item Similarly, if the action is essentially free (i.e. there exists a dense subset $Y \subset X$ such that $\Gamma \acts Y$ is a free action) iff the groupoid is essentially principal iff the groupoid is effective. 
    \item If the action is minimal (i.e. the orbit $Gx$ is dense in $X$ for all $x \in X$), iff the groupoid is minimal. 
    \item If the action is amenable (for example, if $\Gamma$ is amenable) then the groupoid is amenable. 
\end{itemize}
A crucial tool to understanding topological groupoids comes from studying their bisections. A bisection is a open subset $B \subset \G$ such that $ s: B \rightarrow s(B) \; r:B \rightarrow r(B)$ are homeomorphisms. We denote the space of compact open bisections by $\mathcal{B}^k$. A topological groupoid $\G$ is said to be \textit{ample} if $\mathcal{B}^k$ forms a basis of the topology on $\G$.

In the ample case, the set $\mathcal{B}^k$ is an inverse semigroup with respect to set multiplication, and pointwise inverses.
\begin{example}
    Let $\alpha: \Gamma \acts X$ be a partial action of a discrete group on a locally compact totally disconnected space. Then
    $\Gamma \ltimes X$ is ample. In this case, a basis for the open compact bisections is given by:
    $$(g,U) \; \; g \in \Gamma, \; \;  U,g(U) \subset X \text{ compact, open}$$
 
\end{example}
Also crucial to our perspective is the notion of a $G$-subshift. 
\begin{example}[Subshift over $G$]
 Let $G$ be a countable discrete group and $A$ be a finite alphabet. Let $Y \subset A^G$ be a closed subset of $A^G$ invariant under the shift by $G$ that is, if $y=\{y(g)\}_{g \in G} \in Y$, then for all $h \in G$, $hy=\{y(gh^{-1})\}_{g \in G} \in Y$. Then, consider the groupoid with elements:
 $$ (y,g,\hat{y}) \quad y,\hat{y} \in Y, \; g \in G, \; gy=\hat{y}$$
The unit space $ \G^{(0)}=\{(y,1,y) \; : \; y \in Y\}$
is identified with $Y$. 
Multiplication is given by: 
$$\times: \G^{(2)}=\{((y_1,g,y_2),(y_2,h,y_3)) \; : \; y_i \in Y, g,h \in G \}\rightarrow \G \quad  (y_1,g,y_2)\cdot(y_2,h,y_3)=(y_1,gh,y_2) $$
the inverse operation is given by
$ (y,g,\hat{y})^{-1}=(\hat{y}, g^{-1}, y)$. Finally, a basis of the compact open bisections is $(U,g,gU) $, where $U$ is a compact open subset of $Y$. 
The resulting groupoid is therefore ample. 

\end{example}
Finally, let us introduce the (unique by \cite{raad2023cdiagonals}) AF-groupoid models associated with UHF algebras. See \cite{matui2006some} for more discussion on AF groupoids. 
\begin{example}[UHF Groupoids]
Let $\{k_i\}_{i \in \mathbb{N}}$ be a sequence of natural numbers. Let $k(n)=\prod_{i=1}^n$. We associate with $\{k_i\}_{i \in \mathbb{N}}$ a Bratelli diagram:
$$ 1 \xrightarrow{k_1} k_1 \xrightarrow{k_2} k(2) \xrightarrow{k_3} k(3) \xrightarrow{k_4} ... \xrightarrow{k_n} k(n) \xrightarrow{k_{n+1}} ... $$
We associate a groupoid as follows. Let $\mathcal{R}_k$ be the full equivalence relation on $k$ points, the groupoid whose unit space is $k$ distinct points and arrow space is a unique arrow from the point $i$ to the point $j$. From a map $k \xrightarrow{k'/k} k'$ we can induce an inclusion map on groupoids given $\mathcal{R}_k \xrightarrow{k'/k} \mathcal{R}_{k'}$ by sending the arrow $i \mapsto j$ to the collection of arrows $i(k'/k)+m \mapsto j(k'/k)+m$ where $0 \leq m <k'/k$. We construct a groupoid $\G=\bigcup_{n \in \mathbb{N}} \mathcal{R}_{k(n)}$ as the inductive limit associated with the Bratelli diagram. This is known as a UHF groupoid and is the standard groupoid model of the UHF algebra associated with the supernatural number $\prod_{i=1}^\infty k_i$. It is ample Cantor, minimal, and principal. 
    \label{uhf groupoids}
\end{example}

\subsection{Topological Full Groups}
Using the concept of bisections, we are ready to define the topological full group of $\G$:
\begin{definition}[Topological Full Group (as in \cite{nekrashevych2019simple}, Definition 2.3)]
Let $\G$ be an \'etale Cantor groupoid. The topological full group, denoted $\mathsf{F}(\G)$ is the set of bisections $\gamma \in \mathcal{B}^k$ such that $s(\gamma)=r(\gamma)=\Gn{0}$. This is a group with respect to multiplication.  \label{groupoid tfg}
\end{definition}In other words, this is the unit group $U(\B{\G})$ of the inverse monoid of compact open bisections $\B{\G}$. Elements of topological full groups can also be thought of as homeomorphisms of the unit space:
\begin{lemma}Each element $\gamma \in \mathsf{F}(\G)$ defines a homeomorphism of the unit space given by:
$$f_\gamma=(r_{\restriction_B})\circ (s_{\restriction_B})^{-1}: (\G)^{(0)} \rightarrow (\G)^{(0)}$$
If $\G$ is effective, the map $f: \mathsf{F}(\G) \rightarrow Homeo(\G^{(0)}) \quad \gamma \mapsto f_\gamma$ is an injection. 
\label{TFG are homeo}\end{lemma}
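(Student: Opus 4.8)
The plan is to prove the two assertions in turn: first that each $f_\gamma$ is a well-defined self-homeomorphism of $\Gn{0}$, and then that the assignment $\gamma \mapsto f_\gamma$ is injective under effectiveness. The first part is essentially unwinding the definition. By definition of $\mathsf{F}(\G)$, an element $\gamma \in \mathcal{B}^k$ satisfies $s(\gamma)=r(\gamma)=\Gn{0}$, and since $\gamma$ is a bisection the restrictions $s|_\gamma$ and $r|_\gamma$ are homeomorphisms of $\gamma$ onto their images, which here are both all of $\Gn{0}$. Hence $(s|_\gamma)^{-1}\colon \Gn{0}\to\gamma$ is a homeomorphism and $f_\gamma=(r|_\gamma)\circ(s|_\gamma)^{-1}$ is a composition of homeomorphisms, so a self-homeomorphism of $\Gn{0}$. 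Concretely, $f_\gamma(x)=r(g)$ for the unique $g\in\gamma$ with $s(g)=x$, and I would keep this pointwise description at hand for the rest of the argument.

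To organise injectivity cleanly I would first record that $f$ is a group homomorphism into $\mathrm{Homeo}(\Gn{0})$ under composition. Given $\gamma_1,\gamma_2\in\mathsf{F}(\G)$ (whose product again lies in $\mathsf{F}(\G)$, as $\mathsf{F}(\G)$ is a group) and $x\in\Gn{0}$, let $g_2\in\gamma_2$ be the unique arrow with $s(g_2)=x$ and $g_1\in\gamma_1$ the unique arrow with $s(g_1)=r(g_2)$. Then $(g_1,g_2)$ is composable, $g_1g_2$ is the unique element of $\gamma_1\gamma_2$ over $x$, and reading off sources and ranges gives $f_{\gamma_1\gamma_2}(x)=r(g_1g_2)=r(g_1)=f_{\gamma_1}(f_{\gamma_2}(x))$. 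Together with $f_{\Gn{0}}=\mathrm{id}$ this shows $f$ is a homomorphism, so injectivity reduces to showing the kernel is trivial.

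The heart of the argument, and the only place effectiveness enters, is computing this kernel. Suppose $f_\gamma=\mathrm{id}$; then the pointwise description forces $r(g)=s(g)$ for every $g\in\gamma$, i.e.\ every element of $\gamma$ lies in the isotropy bundle. If $\gamma\neq\Gn{0}$, I would first observe $\gamma\not\subseteq\Gn{0}$: otherwise $s|_{\Gn{0}}=\mathrm{id}$ together with $s(\gamma)=\Gn{0}$ would give $\gamma=s(\gamma)=\Gn{0}$. Hence $\gamma$ contains some $g\in\G\setminus\Gn{0}$. But $\gamma$ is itself a compact open bisection containing $g$, so effectiveness produces an element $g'\in\gamma$ with $s(g')\neq r(g')$, contradicting the previous sentence. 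Therefore $\gamma=\Gn{0}$, the kernel is trivial, and $f$ is injective.

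The main obstacle is not analytic but definitional: one must correctly translate $f_\gamma=\mathrm{id}$ into the statement that $\gamma$ is contained in the isotropy bundle, and then apply effectiveness to the bisection $\gamma$ itself rather than to some auxiliary neighbourhood. The subtle point worth flagging is that in a non-principal groupoid distinct arrows may share the same source and range, so injectivity genuinely requires the effectiveness hypothesis and does not follow from the bisection property alone.
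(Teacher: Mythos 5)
Your proposal is correct and complete. The paper states this lemma without proof (it is treated as routine background, followed only by the remark that the identification is ``routinely used throughout this text''), so there is no authorial argument to compare against; your route --- well-definedness of $f_\gamma$ from the bisection property together with $s(\gamma)=r(\gamma)=\Gn{0}$, reduction of injectivity to triviality of the kernel via the homomorphism identity $f_{\gamma_1\gamma_2}=f_{\gamma_1}\circ f_{\gamma_2}$, and then applying the paper's stated definition of effectiveness with the compact open bisection $B=\gamma$ itself --- is exactly the standard argument one would supply here. Your closing observation, that in a non-principal groupoid distinct arrows may share source and range so the effectiveness hypothesis is genuinely needed for injectivity, is also accurate and worth keeping.
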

This movement between perspectives is routinely used throughout this text.
\begin{lemma}
   Let $\Gamma \ltimes_\alpha X$ be the partial transformation groupoid of a discrete group acting on the Cantor space. Suppose that $\alpha$ is essentially free. Then the following groups are isomorphic:
   \begin{itemize}
       \item $\mathsf{F}(\Gamma \ltimes_\alpha X)$
       \item  The group of homeomorphisms $ \gamma \in Homeo(X)$ such that $ \forall x \in X, \exists g \in \Gamma, U \subset X$ compact, open neighbourhood of $x \text{ such that } \gamma|_U=f|_U$
       \item The group of homeomorphisms $\gamma \in Homeo(X) $ such that there exists a partition $X=\bigsqcup_{i=1}^n X_i$ into compact open subsets, and  $ g_1,...g_n \in \Gamma \text{ such that } \gamma|_{X_i}=\alpha(g_i)|_{X_i}$
   \end{itemize}\label{description of a tfg of a partial action}
\end{lemma}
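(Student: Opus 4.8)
The plan is to prove the equivalence by passing through the homeomorphism picture of Lemma \ref{TFG are homeo} and then comparing the three groups as subgroups of $Homeo(X)$. First I would record that $\G := \Gamma \ltimes_\alpha X$ is ample, with basic compact open bisections of the form $(g,U)$ where $g \in \Gamma$ and $U, \alpha(g)(U)$ are compact open in $X$; and that since $\alpha$ is essentially free, $\G$ is effective, so by Lemma \ref{TFG are homeo} the assignment $\gamma \mapsto f_\gamma$ is an injective homomorphism $\mathsf{F}(\G) \hookrightarrow Homeo(X)$, under the identification $\Gn{0} = X$. (Here I read the second bullet with $\alpha(g)|_U$ in place of the displayed $f|_U$.)

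The core step is to identify the image of $\mathsf{F}(\G)$ with the third group. Given $B \in \mathsf{F}(\G)$, that is, a compact open bisection with $s(B) = r(B) = X$, I would cover $B$ by basic bisections $(g,U)$; by compactness finitely many suffice, and since $B$ is a bisection the restriction of $s$ to $B$ is injective, which lets me disjointify these sources (using total disconnectedness of $X$) into a finite partition $X = \bigsqcup_i U_i$ with $B = \bigsqcup_i (g_i, U_i)$. Then $f_B|_{U_i} = \alpha(g_i)|_{U_i}$, which is exactly the data in the third description. Conversely, from a partition $X = \bigsqcup_i X_i$ into compact open sets together with $g_i \in \Gamma$ satisfying $\gamma|_{X_i} = \alpha(g_i)|_{X_i}$, I would assemble $B := \bigsqcup_i (g_i, X_i)$ and check that it is a compact open bisection lying in $\mathsf{F}(\G)$ with $f_B = \gamma$.

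Then I would show the second and third groups coincide as subsets of $Homeo(X)$. The inclusion (third) $\subseteq$ (second) is immediate, taking $U = X_i$. For (second) $\subseteq$ (third), the local neighbourhoods $\{U_x\}_{x \in X}$ form an open cover of the compact space $X$; extracting a finite subcover and disjointifying into compact open pieces yields a finite partition on each atom of which $\gamma$ agrees with some $\alpha(g_i)$ (no inconsistency arises on overlaps, since all the local data describe the single fixed homeomorphism $\gamma$). Finally I would note that, under the bisection-to-homeomorphism correspondence, multiplication of bisections maps to composition of homeomorphisms, so all three identifications are isomorphisms of groups rather than mere bijections.

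The main obstacle I anticipate is twofold and lives in the second paragraph. First, in passing from a bisection $B$ to a genuine partition of its source, one must use that $B$ is a bisection — so that $s|_B$ is injective — in order to replace an arbitrary finite cover of $B$ by disjoint pieces without double-counting. Second, in the converse direction the delicate point is verifying that $B = \bigsqcup_i (g_i, X_i)$ has full range $r(B) = X$ and is itself a bisection; this forces me to invoke that $\gamma$ is a bijection of $X$, whence the images $\{\alpha(g_i)(X_i)\}$ also partition $X$ and $r|_B$ is injective. Throughout, essential freeness is precisely what guarantees injectivity of $f$, so that these homeomorphism-level computations faithfully reflect the groupoid-level structure.
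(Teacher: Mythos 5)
Your proof is correct. Note that the paper itself states Lemma \ref{description of a tfg of a partial action} without proof, treating it as a routine consequence of Lemma \ref{TFG are homeo} and ampleness, so there is no written argument to compare against; what you give is exactly the standard argument the paper implicitly relies on. All the genuinely load-bearing points are handled: essential freeness gives effectiveness and hence injectivity of $\gamma \mapsto f_\gamma$; compactness of $B$ plus disjointification of a finite clopen cover (valid because $s|_B$ is injective and each basic piece $(g_j,U_j)$ lies inside $B$) gives the piecewise description; and in the converse direction you correctly isolate that bijectivity of $\gamma$ is what makes $r(B)=X$ and $r|_B$ injective, so that $\bigsqcup_i (g_i,X_i)$ really is a full bisection. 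You also read the paper's typo $\gamma|_U = f|_U$ in the second bullet in the only sensible way, namely as $\gamma|_U = \alpha(g)|_U$.
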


Let us discuss the subgroup structure of topological full groups. 
\begin{definition}[$\mathsf{A}(\G)$]
Let $\G$ be an effective, Cantor groupoid with infinite orbits. Let us define an analogue of the infinite alternating group in $\mathsf{F}(\G)$, which we call the alternating group of $\G$. For open compact bisection $B_1,B_2$ with $s(B_1),r(B_1)=s(B_2),r(B_2)$ pairwise disjoint, let 
$$\gamma_{B_1,B_2}=B_1 \sqcup B_2 \sqcup (B_1 B_2)^{-1} \sqcup (\G^{(0)} \setminus s(B_1) \sqcup s(B_2) \sqcup r(B_2))$$
We define $\mathsf{A}(\G)=\langle \gamma_{B_1,B_2} \; : \; B_1,B_2 \in \mathcal{B}^k, \; s(B_1),r(B_1)=s(B_2),r(B_2) \text{ are pairwise disjoint } \rangle $\end{definition}
\begin{definition}[$\mathsf{D}(\G)$]
    Let $\G$ be an effective Cantor groupoid. Let $\mathsf{D}(\G)$ denote the derived subgroup of $\mathsf{F}(\G)$, that is $\mathsf{D}(\G)=\langle [\gamma_1,\gamma_2] \; : \; \gamma_1,\gamma_2 \in \mathsf{F}(\G) \rangle $
\end{definition}
Note in particular then $A(\G)$ is a subgroup of $\mathsf{D}(\G)$. Indeed if we take for $B \in \mathcal{B}^k$ with $s(B) \cap r(B) = \emptyset$ 
$$ \gamma_{B}:=B \sqcup B^{-1} \sqcup (\G^{0} \setminus s(B) \cup r(B) ) \in \mathsf{F}(\G)$$
Then $$ \gamma_{B_1,B_2}=[\gamma_{B_1},\gamma_{B_2}]$$ It is currently open whether there exists an effective, \'etale, Cantor groupoid $\G$ such that $\mathsf{A}(\G) \neq \mathsf{D}(\G)$. The above group elements $\gamma_B$ again generate a certain subgroup. 
\begin{definition}[$\mathsf{S}(\G)$]
    Let $\G$ be an effective Cantor groupoid with infinite orbits. Let us define an analogue of the infinite symmetric group in $\mathsf{F}(\G)$, which we call the symmetric group of $\G$. Then $$\mathsf{S}(\G)=\langle \gamma_B \; : \; B \in \mathcal{B}^k \; s(B) \cap r(B)=\emptyset \rangle $$ \label{def symmetric}
\end{definition}The above argument also shows that $\mathsf{A}(\G)$ is a subgroup of $D(\mathsf{S}(\G))$. \ \\ 
Let us remark that each of the groups we have defined thus far are actually normal in $\mathsf{F}(\G)$. The normality of $\mathsf{D}(\G)$ is generic, but for $\mathsf{A}(\G),\mathsf{S}(\G)$ it follows from the observation that for all $\gamma \in \mathsf{F}(\G)$, and $\gamma_B \in \mathsf{S}(\G)$, $\gamma \gamma_B \gamma^{-1}=\gamma_{\gamma B \gamma^{-1}}$. 
Simplicity of $\mathsf{A}(\G)$ is equivalent to minimality of $\G$.

\begin{theorem}[Matui (\cite{matui2014topological} Theorem 4.16, see also Nekrashevych \cite{nekrashevych2019simple}, Theorem 1.1)]
Let $\G$ be an almost finite, effective Cantor groupoid. Then, the following are equivalent:  \label{d simple}
\begin{itemize}
    \item $\G$ is minimal. 
    \item $\mathsf{D}(\G)$ is simple. 
\end{itemize}
Moreover, in the case where $\G$ is minimal, $\mathsf{D}(\G) \cong \mathsf{A}(\G)$. 
\end{theorem}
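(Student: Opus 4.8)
The plan is to reduce the entire statement to a single purely algebraic identity, namely $\mathsf{A}(\G)=\mathsf{D}(\G)$, which I would establish for almost finite, effective Cantor groupoids with infinite orbits without yet invoking minimality. Granting this identity, the theorem follows at once from the fact recorded just above its statement that simplicity of $\mathsf{A}(\G)$ is equivalent to minimality of $\G$: one obtains the chain of equivalences
$$\G \text{ minimal} \iff \mathsf{A}(\G) \text{ simple} \iff \mathsf{D}(\G) \text{ simple},$$
and the ``moreover'' clause is precisely the identity itself. In particular the delicate implication that simplicity of $\mathsf{D}(\G)$ forces $\G$ to be minimal needs no separate argument: if $\G$ fails to be minimal then $\mathsf{A}(\G)$ is not simple, hence neither is $\mathsf{D}(\G)=\mathsf{A}(\G)$.

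So the whole weight of the proof falls on $\mathsf{A}(\G)=\mathsf{D}(\G)$. The inclusion $\mathsf{A}(\G)\subseteq\mathsf{D}(\G)$ is already recorded via $\gamma_{B_1,B_2}=[\gamma_{B_1},\gamma_{B_2}]$, so I only need $\mathsf{D}(\G)\subseteq\mathsf{A}(\G)$. Since $\mathsf{A}(\G)$ is normal in $\mathsf{F}(\G)$, it suffices to show that $\mathsf{F}(\G)/\mathsf{A}(\G)$ is abelian, i.e.\ that every commutator $[f,g]$ with $f,g\in\mathsf{F}(\G)$ lies in $\mathsf{A}(\G)$. This is the exact groupoid analogue of the classical fact $[S_n,S_n]=A_n$, and my strategy is to push that elementary fact through a finite-approximation argument.

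The finite approximation is where almost finiteness does its work. Each of $f,g$ is a compact open bisection, hence a finite disjoint union of basic bisections, so $f$ and $g$ are both realised inside the topological full group of a single compact open (elementary) subgroupoid. Almost finiteness lets me enlarge this to a Kakutani--Rokhlin tower: a clopen partition of $\Gn{0}$ into finitely many columns on which $f$ and $g$ act by permuting clopen ``sheets'', so that on each block the pair $(f,g)$ is literally a pair of permutations of a finite set and $[f,g]$ is an even permutation. Using that the orbits are infinite --- which, together with minimality or directly with almost finiteness, furnishes three pairwise disjoint clopen sets lying in a common orbit and connected by elements of $\mathcal{B}^k$ --- I can rewrite each such even permutation as a product of $3$-cycles, each of which is exactly a generator $\gamma_{B_1,B_2}$. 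Hence $[f,g]\in\mathsf{A}(\G)$, completing $\mathsf{D}(\G)\subseteq\mathsf{A}(\G)$.

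I expect the main obstacle to be making this tower picture precise and uniform: extracting from almost finiteness a clopen tower on which two prescribed full-group elements simultaneously become column permutations, and then carrying out the $S_n$-to-$A_n$ bookkeeping so that the parity cancellation in the commutator is realised by genuine generators $\gamma_{B_1,B_2}$ with pairwise disjoint sources and ranges. This last point is exactly where infinite orbits are indispensable --- one needs spare disjoint clopen room in the same orbit to host each $3$-cycle, just as $A_n$ is generated by $3$-cycles only once $n\geq 3$. An alternative to the quotient-is-abelian route would be a direct simplicity proof for $\mathsf{A}(\G)$ in the style of Nekrashevych, fragmenting a nontrivial normal element against a clopen set on which it acts freely, but the reduction above is cleaner given that the simplicity--minimality equivalence is already in hand.
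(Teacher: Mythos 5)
The paper itself gives no proof of this statement; it is quoted from Matui \cite{matui2014topological} and Nekrashevych \cite{nekrashevych2019simple}, so your attempt has to be judged on its own merits, and it contains a genuine gap. Your entire strategy rests on establishing $\mathsf{A}(\G)=\mathsf{D}(\G)$ for almost finite, effective Cantor groupoids \emph{without} minimality, and your proof of that identity conflates \emph{almost finite} with \emph{AF}. Almost finiteness does not let you realise two given elements $f,g\in\mathsf{F}(\G)$ inside the topological full group of a single elementary subgroupoid, nor make them ``literally a pair of permutations'' of the columns of a Kakutani--Rokhlin tower: it only provides, for each compact $C\subset\G$ and $\epsilon>0$, an elementary subgroupoid $K$ with $K^{(0)}=\G^{(0)}$ that captures $C$ up to a fibrewise $\epsilon$-small error (a F{\o}lner-type condition on $|CK\setminus K|$ relative to $|K|$), and the uncaptured boundary piece is exactly where all the work lies. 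The distinction is fatal rather than a matter of bookkeeping: an elementary subgroupoid is a finite disjoint union $\bigsqcup_i \mathcal{R}_{k_i}\times X_i$, so every element of $\mathsf{F}(K)$ is a locally constant field of permutations and has finite order (dividing the least common multiple of the $k_i!$). Hence no infinite-order element of $\mathsf{F}(\G)$ lies in any $\mathsf{F}(K)$, and such elements exist in the very groupoids of this paper: $\G_\Gamma$ for $\Gamma=\mathbb{Z}\oplus\theta\mathbb{Z}$ is almost finite but not AF (its reduced $C^*$-algebra is $\mathcal{A}_\theta$, which has nontrivial $K_1$), while the rotation $t\mapsto t+\theta\ (\mathrm{mod}\ \mathbb{Z})$ is an infinite-order element of $IE(\Gamma)=\mathsf{F}(\G_\Gamma)$. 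So the finite-approximation step collapses precisely on the class of groupoids the theorem is about; it is valid only for AF groupoids.

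Beyond that step, the logical order of your reduction is inverted relative to what is actually provable. In Matui's and Nekrashevych's work the identity $\mathsf{D}(\G)=\mathsf{A}(\G)$ is a \emph{consequence} of the normal-subgroup machinery in the minimal case (every nontrivial subgroup normalized by $\mathsf{D}(\G)$ contains $\mathsf{D}(\G)$; apply this to the nontrivial normal subgroup $\mathsf{A}(\G)$), and minimality is used in that machinery; a minimality-free proof of $\mathsf{A}(\G)=\mathsf{D}(\G)$ is not available. Indeed the paper records, just before the theorem, that it is open whether $\mathsf{A}(\G)\neq\mathsf{D}(\G)$ can occur for effective Cantor groupoids, and the theorem asserts the equality only when $\G$ is minimal --- your plan would prove something strictly stronger than the cited result, which should itself have been a warning sign. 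Granting Nekrashevych's equivalence ``$\mathsf{A}(\G)$ simple $\iff$ $\G$ minimal'', your chain of equivalences would follow formally from the identity, so the reduction is sound in shape; but the identity is exactly the hard, and in this generality unestablished, content. What your argument does prove correctly is the theorem for AF groupoids, where $\mathsf{F}(\G)$ genuinely is an increasing union of groups $\mathsf{F}(K)$ over elementary subgroupoids; bridging the gap from AF to almost finite (and handling the tower boundary terms, where minimality enters) is the actual substance of Matui's Theorem 4.16.
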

A crucial result in the theory of topological full groups is the Matui-Rubin isomorphism Theorem. 
\begin{theorem}[Matui-Rubin Isomorphism Theorem (\cite{matui2014topological}, Theorem 3.10)]
\label{matui isomorphism theorem}
Let $\G_1,\G_2$ be essentially principal, \'etale, minimal Cantor groupoids. Then the following are equivalent:
\begin{itemize}
    \item $\G_1 \cong \G_2$ as \'etale groupoids. 
    \item $\mathsf{F}(\G_1) \cong \mathsf{F}(\G_2)$ as discrete groups.
     \item $\mathsf{D}(\G_1) \cong \mathsf{D}(\G_2)$ as discrete groups.
\end{itemize}
\end{theorem}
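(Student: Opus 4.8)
The two ``downward'' implications are the formal ones: an isomorphism $\G_1 \cong \G_2$ of \'etale groupoids induces an isomorphism of the inverse semigroups of compact open bisections, hence by Definition \ref{groupoid tfg} an isomorphism $\mathsf{F}(\G_1) \cong \mathsf{F}(\G_2)$ of their unit groups, and this restricts to the derived subgroups to give $\mathsf{D}(\G_1) \cong \mathsf{D}(\G_2)$. So the real content is the two \emph{reconstruction} directions: recovering $\G$ up to isomorphism from the abstract group $\mathsf{F}(\G)$, and likewise from $\mathsf{D}(\G)$. The engine for this is Rubin's reconstruction theorem for groups of homeomorphisms, and I would organise the argument in three stages.

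\textbf{Stage 1 (verify Rubin's hypotheses).} By Lemma \ref{TFG are homeo}, effectiveness of $\G$ gives a faithful action of $\mathsf{F}(\G)$, and hence of $\mathsf{D}(\G)$, on the Cantor space $\G^{(0)}$ by homeomorphisms; note $\G^{(0)}$ is compact, Hausdorff, totally disconnected and perfect. I would check that this action is \emph{locally dense} in Rubin's sense: for every unit $u$ and every nonempty open $U \ni u$, the closure of the orbit of $u$ under the subgroup of elements supported inside $U$ has nonempty interior. This is exactly where minimality and the abundance of bisections are used: given $U$, minimality lets me produce compact open $V \subseteq U$ together with a bisection carrying $V$ off itself inside $U$, so the alternating-type elements $\gamma_{B_1,B_2}$ of Definition $\mathsf{A}(\G)$ supported in $U$ act with enough transitivity on the compact open subsets of $U$; since these elements lie in $\mathsf{A}(\G) \subseteq \mathsf{D}(\G)$, local density holds already for $\mathsf{D}(\G)$.

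\textbf{Stage 2 (apply Rubin).} Rubin's theorem then upgrades any abstract isomorphism $\phi \colon \mathsf{F}(\G_1) \cong \mathsf{F}(\G_2)$, respectively $\mathsf{D}(\G_1) \cong \mathsf{D}(\G_2)$, to a \emph{spatial} one: there is a unique homeomorphism $\psi \colon \G_1^{(0)} \to \G_2^{(0)}$ with $\phi(\gamma) = \psi \circ f_\gamma \circ \psi^{-1}$ for all $\gamma$. At this point the unit spaces and the induced actions of the full (or derived) groups are identified. \textbf{Stage 3 (recover the groupoid).} The remaining step is to promote this identification of actions on unit spaces to an isomorphism of the \emph{groupoids}. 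Here I use that an essentially principal (equivalently effective) \'etale groupoid is canonically isomorphic to the groupoid of germs of its full pseudogroup of compact open bisections. I would argue that $\psi$ carries compact open bisections to compact open bisections: any bisection $B$ with $s(B) \cap r(B) = \emptyset$ is, locally, the restriction of the global element $\gamma_B = B \sqcup B^{-1} \sqcup (\G^{(0)} \setminus s(B) \cup r(B))$ lying in the full group, and $\psi$ intertwines these global elements, hence respects their local pieces and the germ equivalence. Consequently $\psi$ induces an isomorphism of germ groupoids, giving $\G_1 \cong \G_2$ as \'etale groupoids and closing the cycle.

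The main obstacle is Stage 3, not Stage 2. Rubin's theorem only returns a homeomorphism intertwining the \emph{global} homeomorphisms in the image of the (derived) full group, whereas the groupoid is built from \emph{partial} symmetries, i.e.\ germs of bisections. The work is to see that the global data determines all the local data: one must check carefully that $\psi$ sends compact open bisections to compact open bisections and preserves germ equivalence. Essential principality is precisely the hypothesis guaranteeing that no two distinct groupoid elements share a germ, so that the germ groupoid faithfully recovers $\G$ and the reconstruction is unambiguous.
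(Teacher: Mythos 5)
The paper offers no proof of Theorem \ref{matui isomorphism theorem}: it is imported verbatim from \cite{matui2014topological} (Theorem 3.10), so the only comparison available is with that cited argument, which your proposal reconstructs faithfully --- Rubin's spatial realization theorem applied to the locally dense action of $\mathsf{D}(\G)$ on the Cantor unit space, followed by recovery of $\G$ as the groupoid of germs of that action, with effectiveness making germs faithful and minimality supplying enough group elements that every arrow of $\G$ arises as a germ. The one point to tighten is in your Stage 3: when the given isomorphism is only between the derived subgroups, the intertwined global elements must be taken of the form $\gamma_{B_1,B_2} \in \mathsf{A}(\G) \subseteq \mathsf{D}(\G)$ rather than $\gamma_B$ (which need not lie in $\mathsf{D}(\G)$), i.e.\ exactly the three-set trick you already invoke in Stage 1, which is where minimality enters.
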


Let us now apply what we have learned to the UHF case. 
\begin{example}[UHF Topological Full Groups]
 Following on from Example \ref{uhf groupoids}, let us remark firstly that it is clear that $\mathsf{F}(\mathcal{R}_n)=S_n$ the symmetric group on $n$ generators. Let $\{k_i\}_{i \in \mathbb{N}}$ be a sequence of natural numbers. Let $k(n)=\prod_{i=1}^n$. We associate with $\{k_i\}_{i \in \mathbb{N}}$ a Bratelli diagram:
$$ 1 \xrightarrow{k_1} k_1 \xrightarrow{k_2} k(2) \xrightarrow{k_3} k(3) \xrightarrow{k_4} ... \xrightarrow{k_n} k(n) \xrightarrow{k_{n+1}} ... $$ It is not hard to see from here that if we have $\G$ as the groupoid associated to the above Bratelli diagram that $\mathsf{F}(\G)=\bigcup_{n \in \mathbb{N}}S_{k(n)}$ where the inclusion maps the cycle $(i,j) \in S_{k(n)}$ to the product of cycles $\prod_{1 \leq m < k_{n+1}}(i k_{n+1}+m, j k_{n+1}+m) $ in $S_{k(n+1)}$. Likewise, $\mathsf{A}(\G)=\mathsf{D}(\G)=\bigcup_{n \in \mathbb{N}}A_{k(n)}$, an inductive limit of (simple) alternating groups. Since we are in the setting of Theorem \ref{d simple}, we can also see that $\mathsf{A}(\G)$ is simple through this result. 
\end{example}
However, we see in the example above that $\mathsf{A}(\G)$ cannot be finitely generated, as the inductive limit of finite groups. This is not always the case, in fact in many cases the alternating group can be a simple, finitely generated group. Nekrashevych showed that a topological full groups alternating group is finitely generated if the underlying groupoid has a technical condition known as expansivity. We recall the definition below:
\begin{definition}[Expansive \cite{nekrashevych2019simple}]
For  an \'etale Cantor groupoid $\G$:
\label{expansive groupoid}
\begin{itemize}
    \item A compact set $K \subset \G$ is called a compact generating set such that $\G= \bigcup_{n \in \mathbb{N}}(K \cup K^{-1})^n$.
    \item A finite cover $\mathcal{B}=\{B_{i}\}_{i=1}^N$ of bisections is called expansive if $\bigcup_{n \in \mathbb{N}} (\mathcal{B} \cup \mathcal{B}^{-1})^n$ forms a basis for the topology of $\G$. 
    \item $\G$ is called expansive if there is a compact generating subset $K$ with an expansive cover $\mathcal{B}$.
    \label{expansive nekrashevych}
\end{itemize}\end{definition}
In Nekrashevych's paper \cite{nekrashevych2019simple}, Nekrashevych showed that this notion of expansivity is related to the finite generation of $\mathsf{A}(\G)$.
\begin{theorem}Let $\G$ be an expansive Cantor groupoid with infinite orbits. Then $\mathsf{A}(\G)$ is finitely generated [\cite{nekrashevych2019simple}, Theorem 5.6]. \label{finite generatedness theorem}
\end{theorem}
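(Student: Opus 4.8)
The plan is to produce an explicit finite subset $S\subset\mathsf{A}(\G)$ from the expansive data and to show that $\langle S\rangle$ contains every $3$-cycle $\gamma_{B_1,B_2}$; since these generate $\mathsf{A}(\G)$ by definition, this suffices. Throughout I would freely use the conjugation identity $\gamma\,\gamma_{B_1,B_2}\,\gamma^{-1}=\gamma_{\gamma B_1\gamma^{-1},\gamma B_2\gamma^{-1}}$ for $\gamma\in\mathsf{F}(\G)$, the analogue for $\mathsf{A}(\G)$ of the formula recorded for $\mathsf{S}(\G)$ in the preliminaries.

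First I would fix a compact generating set $K$ and an expansive cover $\mathcal{B}=\{B_1,\dots,B_N\}$ as in Definition \ref{expansive groupoid}, and from the basis property of $\bigcup_n(\mathcal{B}\cup\mathcal{B}^{-1})^n$ together with compactness of $\Gn{0}$ extract a finite clopen partition $\mathcal{P}=\{U_1,\dots,U_m\}$ of the unit space fine enough that each $B_i$ restricts to a disjoint union of bisections, each carrying a single piece $U_j$ homeomorphically onto a single piece $U_k$. Infinite orbits guarantee at least three pieces meet each orbit, so genuine $3$-cycles on triples of pieces exist. I would then let $S$ be the finite set of elementary $3$-cycles $\gamma_{C,C'}$, where $C,C'$ range over the finitely many elementary restrictions of the $B_i$ with pairwise disjoint sources and ranges; this is a finite subset of $\mathsf{A}(\G)$.

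The core step is a bounded transitivity claim: for partition pieces $U_j,U_k$ lying in a common orbit, a bisection carrying $U_j$ onto $U_k$ is a product of \emph{boundedly} many elementary restrictions of the cover, because $\mathcal{B}$-products form a basis; completing it to a transposition-type element $\gamma_C\in\mathsf{S}(\G)$ and applying an even correction places it in $\langle S\rangle$, and conjugating a seed $3$-cycle by such elements moves it onto any prescribed disjoint triple of pieces. Hence every $3$-cycle supported on partition pieces lies in $\langle S\rangle$. To finish, an arbitrary $\gamma_{B_1,B_2}$ has its supports $s(B_i),r(B_i)$ subdivided into pieces of $\mathcal{P}$ (after refining $\mathcal{P}$ by a cover word, legitimate since $\mathcal{B}$-products are a basis), whereupon the $3$-cycle factors as a product of elementary $3$-cycles between pieces, each already shown to be in $\langle S\rangle$.

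The main obstacle is the word "boundedly" above: finite generation hinges on a \emph{uniform} bound on how many cover bisections are needed both to subdivide a given clopen set into partition pieces and to transport one piece to another. Without such a bound the decompositions would invoke infinitely many distinct conjugators and $S$ would not be finite; it is precisely the combination of the compact generating set $K$ with the basis property of the expansive cover that controls the groupoid at a single fixed scale and supplies this uniformity. Establishing that expansivity yields this scale bound is the technical heart of the argument and is where Nekrashevych (\cite{nekrashevych2019simple}, Theorem 5.6) concentrates the work.
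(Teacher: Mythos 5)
The paper offers no proof of this statement to compare against: it is imported wholesale from Nekrashevych (\cite{nekrashevych2019simple}, Theorem 5.6), with the citation standing in for the argument. Your proposal must therefore stand on its own, and it does not, for two concrete reasons. The first is that your partition $\mathcal{P}$ cannot exist. You require each cover element $B_i$ to decompose as a disjoint union of bisections, each carrying a piece $U_j$ homeomorphically \emph{onto} a piece $U_k$. This property propagates to inverses and to all products: if $B_1,B_2$ both decompose into piece-onto-piece parts, then so does $B_1B_2$ (composability forces the intermediate pieces to match, since distinct pieces are disjoint). Hence every nonempty element of $\bigcup_{n}(\mathcal{B}\cup\mathcal{B}^{-1})^n$ has source and range equal to a nonempty union of pieces; in particular, any such element contained in the unit space \emph{is} a union of pieces. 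But expansivity says this family is a basis for the topology of $\G$, and $\G^{(0)}$ is a Cantor space, so the basis must contain nonempty open sets properly contained inside a single piece $U_j$ --- which no element of the family can be. So the piece-onto-piece property contradicts the very basis property you invoke to build $\mathcal{P}$; step one of your construction is impossible for every groupoid covered by the theorem, not merely delicate. (Dynamically: for the Sturmian-type systems realising $IE(\Gamma)$ with $\Gamma$ finitely generated, your partition would be a finite clopen partition permuted by the shift, i.e.\ a finite invariant partition of a minimal system on an infinite space, which does not exist.)

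The second, decisive, gap is the ``bounded transitivity claim.'' Even setting aside the partition, the uniform bound --- that transporting one piece to another, and subdividing the supports of an arbitrary $\gamma_{B_1,B_2}$, costs only boundedly many cover elements --- is not a technical lemma feeding your proof; it essentially \emph{is} the theorem. Expansivity gives that every compact open bisection is a finite union of products of cover elements, but with no control whatsoever on the length of those products, so a priori the conjugators needed for finer and finer $3$-cycles grow without bound and your set $S$ fails to generate. You do not argue this bound; you explicitly defer it, writing that establishing it ``is where Nekrashevych concentrates the work.'' A proof whose technical heart is delegated to the cited reference is a citation, not a proof --- and it is the same citation the paper itself makes. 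Any correct argument has to confront all scales at once, for instance by inducting on the length of products of cover elements and using identities among the alternating elements $\gamma_{B_1,B_2}$ (together with infinite orbits to create room for supports) to reduce long words to short ones; it cannot work at one fixed partition scale as your outline does.
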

    This notion of expansivity generalises the notion of expansivity for (finitely generated) group actions. 
\begin{definition}[Expansive action]
    Let $\alpha:G \acts X$ be an action. We say that $\alpha$ is expansive if there exists $\epsilon >0$ such that for all $x,y \in X, \, x \neq y$, there exists $g \in G$ such that $d(g x, g y)>\epsilon$. \label{expansive action}
\end{definition}
Nekrashevych showed that a transformation groupoid of a finitely generated group acting on the Cantor space was expansive iff the underlying action was expansive.
\begin{lemma}
    Let $G \ltimes X$ be a transformation groupoid of a discrete group on the Cantor space $X$. Then $G \ltimes X$ is compactly generated iff $G$ is finitely generated. \label{compactly generated iff}
\end{lemma}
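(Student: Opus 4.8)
The plan is to exploit the clopen slice decomposition $G \ltimes X = \bigsqcup_{g \in G} (\{g\} \times X)$, in which each slice $\{g\} \times X = (g,X)$ is a compact open bisection because $X$ is the compact Cantor space. The projection $\pi \colon G \ltimes X \to G$, $(g,x) \mapsto g$, onto the discrete group $G$ is continuous, since each fibre $\pi^{-1}(g) = \{g\} \times X$ is open. This yields the single observation driving both directions: any compact $K \subseteq G \ltimes X$ has finite image $\pi(K) \subseteq G$, so $K$ meets only finitely many slices and is contained in $\bigcup_{g \in F}(\{g\} \times X)$ for some finite $F \subseteq G$.

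For the forward implication, suppose $G = \langle S \rangle$ with $S$ finite. I would set $K = \bigcup_{g \in S \cup \{1\}}(\{g\} \times X)$, a finite union of compact slices and hence compact. A direct computation with the groupoid operations shows that slices behave like group elements: $(\{g\} \times X)^{-1} = \{g^{-1}\} \times X$ (using $g(X)=X$), and $(\{g\} \times X)(\{h\} \times X) = \{gh\} \times X$, since a composable pair $(g,h(x))(h,x) = (gh,x)$ recovers the full slice as $x$ ranges over $X$. Therefore $(K \cup K^{-1})^n = \bigcup_{w \in (S \cup S^{-1} \cup \{1\})^n}(\{w\} \times X)$, and taking the union over $n$ gives $\bigcup_{g \in G}(\{g\} \times X) = G \ltimes X$ precisely because $S$ generates $G$. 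Thus $K$ is a compact generating set.

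For the reverse implication, suppose $K$ is a compact generating set and let $F \subseteq G$ be the finite set with $K \subseteq \bigcup_{g \in F}(\{g\} \times X)$ furnished by the observation above. Every element of $K \cup K^{-1}$ then has $G$-coordinate in $F \cup F^{-1}$, and since the groupoid product multiplies $G$-coordinates, every element of $(K \cup K^{-1})^n$ has $G$-coordinate in $(F \cup F^{-1})^n \subseteq \langle F \rangle$. As $G \ltimes X = \bigcup_{n} (K \cup K^{-1})^n$ and each $g \in G$ arises as the $G$-coordinate of some $(g,x)$, we conclude $g \in \langle F \rangle$. Hence $G = \langle F \rangle$ is finitely generated.

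The argument is essentially routine once the slice observation is in place; the only point requiring care is the compactness of $X$, which is exactly what forces a compact set to meet only finitely many slices and is where the hypothesis that $X$ is the Cantor space (rather than a general locally compact totally disconnected space) enters.
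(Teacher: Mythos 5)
Your proof is correct and follows essentially the same route as the paper: the same compact set $\bigcup_{g\in S}(\{g\}\times X)$ for the forward direction, and for the converse the same key observation that compactness confines a generating set to finitely many slices $\{g\}\times X$, hence to finitely many group elements. The only cosmetic difference is that you extract the finite set $F$ via the continuous projection onto the discrete group $G$, whereas the paper invokes ampleness to cover $K$ by finitely many basic bisections $(g_i,Y_i)$; your write-up is in fact a little more careful in spelling out why the $G$-coordinates of products stay in $\langle F\rangle$.
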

\begin{proof}
$\Leftarrow$ If $G$ is finitely generated, then it has a generating set $g_1,..,g_n$. Consider the compact set $K=\sqcup_{i \in I} (g_i, X)$. This clearly will then generate the groupoid. \ \\
$\Rightarrow$ If $G \ltimes X$ is compactly generated, there exists a compact subset $K$ generating the groupoid. But this groupoid is ample; it is generated by compact open bisections. Therefore there exists a finite set of group elements $\{g_i\}_{i=1}^n$ subsets $\{Y_i\}_{i=1}^n$ such that $K=\sqcup_{i=1}^n (g_i,Y_i)$. Then, $K \subset K'=\sqcup_{i=1}^n(g_i,X)$, so $K'$ is a compact generating set. Hence, $\{g_i\}_{i=1}^n $ is a finite generating set of $G$. 
\end{proof}
\begin{lemma}
    Let $\G$ be an essentially principal ample groupoid with infinite orbits. Then $\mathsf{A}(\G)$ is finitely generated $\implies \G$ is compactly generated. \label{a finitely generated implies}
\end{lemma}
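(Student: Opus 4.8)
The plan is to take a finite generating set of $\mathsf{A}(\G)$ and show that the union of its members, viewed as compact open bisections, together with the unit space, is a compact generating set for $\G$. Write $\mathsf{A}(\G) = \langle \alpha_1, \dots, \alpha_m \rangle$ with each $\alpha_i \in \mathsf{F}(\G) \subseteq \mathcal{B}^k$, and set $K := \G^{(0)} \cup \bigcup_{i=1}^m \alpha_i$. Since each $\alpha_i$ is a compact open bisection and $\G^{(0)}$ is compact, $K$ is compact, so it remains only to prove $\G = \bigcup_{n} (K \cup K^{-1})^n =: \langle K \rangle$.

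The central observation is that \emph{every} admissible $\gamma_{B_1,B_2}$ lies in $\mathsf{A}(\G)$ by definition, hence is a word in the $\alpha_i^{\pm 1}$. Because the group law on $\mathsf{F}(\G)$ is exactly the product of bisections in the inverse semigroup $\mathcal{B}^k$, such a word equals, as a subset of $\G$, the corresponding groupoid product of the bisections $\alpha_i^{\pm 1}$; thus $\gamma_{B_1,B_2} \subseteq \langle K \rangle$ as a set. Restricting to the source set $A = s(B_1)$ by multiplying on the right by $A \subseteq \G^{(0)} \subseteq \langle K \rangle$, and using that $\langle K \rangle$ is a subgroupoid, we recover $B_1 = \gamma_{B_1,B_2}\, A \subseteq \langle K \rangle$. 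So $\langle K \rangle$ contains the ``first leg'' $B_1$ of every admissible $3$-cycle configuration.

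It then suffices to show that every $\gamma \in \G$ arises (locally) as such a first leg. Units lie in $K$. For $\gamma$ with $x := s(\gamma) \neq r(\gamma) =: y$, I would use that the orbit of $x$ is infinite to choose a third point $z \neq x,y$ and an element $\delta$ with $s(\delta) = x$, $r(\delta) = z$; shrinking bisections $B \ni \gamma$ and $D \ni \delta$ to a common, small enough source $V \ni x$, I arrange that $V$, $V_y := r(B|_V)$ and $V_z := r(D|_V)$ are pairwise disjoint, which is possible as $\G^{(0)}$ is totally disconnected Hausdorff. Then $B_1 := B|_V$ and $B_2 := (D|_V)(B|_V)^{-1} \colon V_y \to V_z$ form an admissible pair, so $\gamma \in B|_V \subseteq \langle K \rangle$. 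Finally, for a nontrivial isotropy element $\gamma$ at $u$, infinite orbits give $v \neq u$ in the orbit of $u$ and some $\delta$ from $u$ to $v$; writing $\gamma = \delta^{-1}(\delta\gamma)$ expresses $\gamma$ as a product of two elements with distinct source and range, each already shown to lie in $\langle K \rangle$. Hence $\langle K \rangle = \G$.

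The hard part is the middle step: one must check that the purely group-theoretic identity expressing $\gamma_{B_1,B_2}$ in the generators really does descend to a set-theoretic containment of bisections, and that the admissibility constraint of three pairwise disjoint base sets can always be met. The latter is exactly where the infinite-orbit hypothesis is essential, providing the auxiliary point $z$ (respectively $v$) needed to build the third set $C = V_z$; without it the cycles $\gamma_{B_1,B_2}$ could not be formed and the argument would break down.
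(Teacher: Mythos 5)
Your proof is correct and follows essentially the same strategy as the paper's: show that every arrow of $\G$ lies in (the first leg of) a three-cycle $\gamma_{B_1,B_2} \in \mathsf{A}(\G)$, handling isotropy elements by factoring through a nearby orbit point, so that a finite generating set of $\mathsf{A}(\G)$ serves as a compact generating set. If anything, your version is tighter than the paper's: the paper invokes minimality to produce the auxiliary third point, which is not among the stated hypotheses, whereas you use the infinite-orbit hypothesis directly, and you also spell out the word-to-bisection descent and the restriction $B_1 = \gamma_{B_1,B_2}\, s(B_1)$ that the paper leaves implicit.
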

\begin{proof}
    Let $g \in \G$. It is enough to show that there exists some bisection $B \in \mathsf{A}(\G)$ such that $g \in B$, since then the generating set of $\mathsf{A}(\G)$ also serves as a compact generating set for $\G$. If $s(g) \neq r(g)$, by ampleness, we have there exists some compact open bisection $\hat{B}_1$ such that $g \in \hat{B}_1$. By restricting the source of $\hat{B}_1$ if necessary, we may assume that $s(\hat{B}_1),r(\hat{B_1})$ are disjoint with $s(\hat{B}_1) \bigcup r(\hat{B_1}) \neq \G^{(0)}$. By minimality, we have there exists some $h \in \G$ with $s(h)=r(g)$ and $r(h) \in (s(\hat{B}_1) \bigcup r(\hat{B_1}))^c$ Then by ampleness, there exists some compact open bisection $B_2$ with $h \in B_2$. Again, by restricting $s(B_2)$ if necessary, we can assume $s(B_2) \subset r(\hat{B}_1)$ and $r(B_2) \subset (s(B_2) \bigcup \hat{B}_1^{-1}(s(B_2))^c$. Now let $B_1=\hat{B}_1 |_{\hat{B_1}^{-1} s(B_2)}$. Then 
$$ \gamma_{B_1,B_2} =B_1 \sqcup B_2 \sqcup (B_1B_2)^{-1} \cup (\G^{(0)} \setminus s(B_1) \cup s(B_2) \cup r(B_2))\in \mathsf{A}(\G)$$
Satisfies $g \in \gamma_{B_1,B_2} $. \ \\
Otherwise $s(g)=r(g)$. Then, there exists $g_1,g_2$ such that $s(g_i) \neq r(g_i)$ but $g_1g_2=g$. Hence by the above argument, there exists $\gamma_1,\gamma_2 \in \mathsf{A}(\G)$ such that $g_1 \in \gamma_1, \gamma_2 \in B_2$ and hence $g=g_1g_2\in \gamma_1 \gamma_2 \in \mathsf{A}(\G)$. 
\end{proof}
Remark that the converse to Lemma \ref{a finitely generated implies} fails, an explicit counterexample would be any $\mathbb{Z}$-odometer.
This proof is inspired by that of [\cite{matui2014topological}, Lemma 3.7]. 
Combining the previous two Lemmas, we obtain the following Corollary.

\begin{corollary}

    Let $\alpha:G \acts X$ be an expansive, minimal, essentially free action of a discrete countable group on the Cantor set. Then
        $G \text{ is finitely generated} \iff \mathsf{A}(G \ltimes X) \text{ is finitely generated.}$
   \label{a finitely generated iff}
\end{corollary}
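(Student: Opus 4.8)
The plan is to prove the two implications separately, in each case translating the statement about $\mathsf{A}(G \ltimes X)$ into a groupoid-level statement that has already been established in the preceding lemmas. Both directions first require checking that the transformation groupoid $G \ltimes X$ satisfies the standing hypotheses of those lemmas. Using the dictionary recorded for (partial) actions, $G \ltimes X$ is ample (it is the transformation groupoid of a discrete group acting on the Cantor set), essentially principal (equivalently effective, by essential freeness of $\alpha$), and minimal (by minimality of $\alpha$). I would also record that the orbits are infinite: since $X$ is the Cantor set it is perfect, hence infinite, and a minimal action on an infinite compact space cannot have a finite orbit, as such an orbit would be closed and non-dense; so every orbit is infinite. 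With these structural facts in hand, both implications become essentially formal.

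For the implication $\mathsf{A}(G \ltimes X)\ \text{finitely generated} \Rightarrow G\ \text{finitely generated}$, I would argue as follows. By the observations above, $G \ltimes X$ is an essentially principal ample groupoid with infinite orbits, so Lemma \ref{a finitely generated implies} applies directly and yields that $G \ltimes X$ is compactly generated. Lemma \ref{compactly generated iff} then immediately gives that $G$ is finitely generated. Note that this direction uses neither expansiveness of the action nor Theorem \ref{finite generatedness theorem}; it is purely a consequence of the compact-generation results.

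For the converse, suppose $G$ is finitely generated. The hypothesis is that $\alpha$ is expansive as an action in the sense of Definition \ref{expansive action}. Since $G$ is finitely generated, Nekrashevych's equivalence between expansiveness of a finitely generated group action on the Cantor set and expansiveness of its transformation groupoid in the sense of Definition \ref{expansive nekrashevych} shows that $G \ltimes X$ is an expansive Cantor groupoid. As it also has infinite orbits (verified above), Theorem \ref{finite generatedness theorem} gives that $\mathsf{A}(G \ltimes X)$ is finitely generated, completing the proof.

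The main obstacle is localised in the converse direction, namely the passage from expansiveness of the action to expansiveness of the groupoid: one must be careful that a finite symbolic generating set witnessing expansiveness of $\alpha$ genuinely produces a compact generating set together with an expansive cover of bisections as demanded by Definition \ref{expansive nekrashevych}, and it is precisely here that the finite generation of $G$ is used. Everything else — ampleness, effectiveness, minimality, and infinitude of orbits — is routine bookkeeping handled by the dictionary for transformation groupoids, so the genuine content is entirely imported from Theorem \ref{finite generatedness theorem}, Lemma \ref{a finitely generated implies}, and Lemma \ref{compactly generated iff}.
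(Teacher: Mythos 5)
Your proposal is correct and takes essentially the same approach as the paper: the direction from finite generation of $\mathsf{A}(G \ltimes X)$ to finite generation of $G$ combines Lemma \ref{a finitely generated implies} with Lemma \ref{compactly generated iff}, and the converse passes from expansiveness of the action (plus finite generation of $G$) to expansiveness of the groupoid and then invokes Theorem \ref{finite generatedness theorem}. Your extra bookkeeping --- verifying ampleness, effectiveness, minimality, and infinitude of orbits --- simply makes explicit what the paper's two-line proof leaves implicit.
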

\begin{proof}
$\Leftarrow$ Is a combination of Lemma \ref{a finitely generated implies} and Lemma \ref{compactly generated iff}.  \ \\
$\Rightarrow$ Follows via Theorem \ref{finite generatedness theorem}. 
\end{proof}

\section{Interval Exchange Groups as Topological Full Groups}
Let us first adapt $\mathbb{R}$ so that we can allow discontinuities at some subset $\Gamma$, by including two points $\tau_+,\tau_-$, separated in the topology, at each point $\tau \in \Gamma$. This definition follows the notation of \cite{chornyi2020topological}. 
\begin{definition}
Let $\Gamma \subset \mathbb{R}$. Let 
$\mathbb{R}_\Gamma:=\{t, a_+,a_- \; : \; t \in \mathbb{R} \setminus \Gamma, a \in \Gamma \} $
With the canonical quotient map $q$ onto $\mathbb{R}$.
$q: \mathbb{R}_\Gamma \rightarrow \mathbb{R} \quad t \mapsto t, a_{\pm} \mapsto a$
Let us define a total order on $\mathbb{R}_\Gamma$ by 
$q(x)<q(y) \implies x < y \; \forall x,y \in \mathbb{R}_\Gamma, a_-<a_+ \forall \tau \in \Gamma$
And let us topologise $\mathbb{R}_\Gamma$ by the order topology, i.e. the topology generated by open intervals:
$$ (x,y)=\{z \in \mathbb{R}_\Gamma \; : \; x<z<y \}, \; x,y \in \mathbb{R}_\Gamma$$
Let $\Gamma_\pm:=\{x_+,x_- \; : x \in \Gamma \} \subset \mathbb{R}_\Gamma$
\end{definition}
This makes the topology identifiable with the disjoint union of countably many Cantor spaces. 
\begin{lemma}
    Let $\Gamma \subset \mathbb{R}$ be dense and countable. Then a (countable) basis for the topology on $\mathbb{R}_\Gamma$ is given by:
    $$(a_-,b_+) \; a< b, \; a,b \in \Gamma $$
    Moreover, each set of the form $(a_-,b_+)$ with $a<b$ is a Cantor set. 
\end{lemma}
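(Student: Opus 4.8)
The plan is to prove the two assertions in turn, using the order-theoretic identity $(a_-,b_+)=[a_+,b_-]$ as the bridge between them.

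For the basis claim, I would first observe that $\mathbb{R}_\Gamma$ is a linear order with neither a least nor a greatest element (each point lies strictly between two others, since its $q$-image does in $\mathbb{R}$), so the bounded open intervals $(x,y)$ already form a basis for the order topology. It therefore suffices to show that for every $z\in(x,y)$ there exist $a,b\in\Gamma$ with $a<b$ and $z\in(a_-,b_+)\subseteq(x,y)$. Since $(a_-,b_+)\subseteq(x,y)$ whenever $x\le a_-<z<b_+\le y$, the whole task reduces to locating suitable endpoints. From $x<z$ we get $q(x)\le q(z)$: if $q(x)<q(z)$, density of $\Gamma$ yields $a\in\Gamma$ with $q(x)<a<q(z)$, whence $x<a_-<z$; and if $q(x)=q(z)$ then necessarily $x=c_-$, $z=c_+$ for some $c\in\Gamma$, and $a=c$ gives $x=c_-=a_-<z$. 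A symmetric argument produces $b$ with $z<b_+\le y$. As $z\in(a_-,b_+)$, this interval is nonempty, which forces $a<b$; countability is immediate from that of $\Gamma$.

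The linchpin for the second claim is a direct computation in the order: $w>a_-\iff w\ge a_+$ and $w<b_+\iff w\le b_-$, because $a_+$ is the immediate successor of $a_-$ and $b_-$ the immediate predecessor of $b_+$. Hence
$$(a_-,b_+)=\{\,w\in\mathbb{R}_\Gamma : a_+\le w\le b_-\,\}=[a_+,b_-],$$
so each such set is simultaneously open and closed, and is nonempty exactly when $a<b$. I would then verify that $[a_+,b_-]$ satisfies Brouwer's criteria for the Cantor set: nonempty, second countable, compact, Hausdorff, totally disconnected and perfect. Hausdorffness is automatic for an order topology, and second countability follows from the countable basis just constructed (subspaces of second countable spaces are second countable), so by Urysohn metrizability will follow once compactness is in hand. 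Total disconnectedness follows since the clopen sets $(c_-,d_+)$ separate any two distinct points, and perfectness from the fact that $\mathbb{R}_\Gamma$ has no isolated points — a point $c_-$ has no immediate predecessor and $c_+$ no immediate successor (reals accumulate on the missing side by density of $\Gamma$), irrational points have neither — together with a short check, using $a<b$, that the endpoints $a_+$ and $b_-$ accumulate from inside $[a_+,b_-]$.

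The genuinely technical point is compactness, and this is where the identity above earns its keep: it converts the problem into compactness of a \emph{closed} order interval, which I would deduce from Dedekind completeness of $\mathbb{R}_\Gamma$. To establish the latter, take a nonempty $S\subseteq\mathbb{R}_\Gamma$ bounded above, set $c=\sup q(S)\in\mathbb{R}$, and show by a brief case distinction — on whether $c\in\Gamma$, and if so whether $c_+\in S$ — that $\sup S$ equals $c$, $c_+$, or $c_-$ respectively. Dedekind completeness then makes $[a_+,b_-]$ a complete lattice (with bottom $a_+$ and top $b_-$), and the standard nested-covering argument, exactly as for $[0,1]\subseteq\mathbb{R}$, gives compactness. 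Everything else reduces to density of $\Gamma$ and the clopen basis, so compactness is the only step requiring real care.
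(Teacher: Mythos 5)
Your proof is correct, and its skeleton --- the identity $(a_-,b_+)=[a_+,b_-]$, density of $\Gamma$ for the basis claim, and Brouwer's characterisation of the Cantor set at the end --- is the same as the paper's; the genuine divergence is in the compactness step, exactly the step you flag as the crux. The paper's argument there is to note that $q$ is continuous and that $q([a_+,b_-])=[a,b]$, from which ``compactness follows''; taken literally this inference is backwards (a continuous map onto a compact space says nothing about the domain), and what the paper is implicitly using is that $q$ is proper --- closed with finite fibres --- or, equivalently, a sequential argument: any sequence in $[a_+,b_-]$ has a monotone subsequence whose $q$-image converges in $[a,b]$, hence the subsequence converges in $\mathbb{R}_\Gamma$, and sequential compactness suffices in a second countable space. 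Your route replaces this with Dedekind completeness of the order (your case analysis on $\sup q(S)$, distinguishing whether it lies in $\Gamma$ and whether the upper point belongs to $S$, is correct) followed by the standard theorem that closed intervals are compact in the order topology of any linearly ordered set with the least-upper-bound property; since $[a_+,b_-]$ is order-convex, its subspace topology agrees with the order topology of the induced order, so that theorem applies. What your approach buys is self-containedness and rigour --- no properness claim about $q$ ever has to be verified; what the paper's approach buys is brevity. Two minor points: Brouwer's theorem also requires perfectness, which you verify explicitly and the paper leaves implicit; and your ``nested-covering argument'' should really be the least-upper-bound argument (one cannot bisect intervals in an abstract order), while the non-isolation of the points $c_\pm$ needs only density of $\mathbb{R}$ rather than of $\Gamma$ --- both are cosmetic, not gaps.
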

\begin{proof}
First let us remark that for all $a<b$ that $(a_-,b_+)=[a_+,b_-]$, so that each of these sets are clopen. By density of $\Gamma$, these form a basis for the topology on $\mathbb{R}_\Gamma$. Thus we establish that $\mathbb{R}_\Gamma$ is second countable, with a basis of clopen sets. Note moreover that the basis elements clearly separate points in $\mathbb{R}_\Gamma$. Note that $q$ is continuous, indeed the preimage of $(a,b) \subset \mathbb{R}$ is of the form $(x,y)$ for some $x,y \in \mathbb{R}_\Gamma$. But since $q(a_-,b_+)=q([a_+,b_-])=[a,b]$, compactness follows. In total then we have that for all $a,b \in \Gamma$ $(a_-,b_+)$ is compact, and has a countable basis of compact open subsets; by Brouwer's theorem, we are done. 
\end{proof}

In \cite{xinlambda}, Section 2.3 Li constructs an analagous space as follows. Let $\Gamma$ be an additive subgroup of $\mathbb{R}$. Let $D(\Gamma^+)$ be the (abelian) semigroup C*-algebra of $\Gamma \cap [0,\infty)$. This group has the basis of idempotents $\{ 1_{a + \Gamma^+ } \; a \in \Gamma \cap [0,+\infty)\} $ He then considers the Gelfand dual space $\Omega(D(\Gamma^+))$, and removes the trivial character $\chi_\infty$ such that for all $a \in \Gamma$ $\chi_\infty(1_{a+\Gamma^+})=1$. This space is denoted $O_{\Gamma^+ \subseteq \Gamma}$ Concretely, this is the space of nonzero, nontrivial, characters
$ \chi: D(\Gamma^+) \rightarrow \{0,1\}$ that are strictly decreasing with on the basis (with respect to the partial order $1_{a + \Gamma^+} \leq 1_{b+ \Gamma^+ } \iff a \leq b$).  This space is topologised in the weak operator topology, which is the topology generated by the basic compact open sets $U_{a,b}:=\{\chi \in _{\Gamma^+ \subseteq \Gamma} \; : \; \chi(a)=1, \chi(b)=0 \}, a,b \in \Gamma$. There is a canonical homeomorphism:
$$ f: \mathbb{R}_\Gamma \rightarrow O_{\Gamma^+ \subseteq \Gamma} \quad a_+ \mapsto \chi_a^+, \; a_- \mapsto \chi_a^-, \; t \mapsto \chi_t \quad a \in \Gamma, t \in \mathbb{R}_\Gamma \setminus \Gamma_\pm  $$
Where for all $a,b \in \Gamma$, $t \in \mathbb{R}_\Gamma \setminus \Gamma_\pm  $,
$ \chi_a^+(1_{b+\Gamma^+})=1 \iff b\leq a, \;  \chi_a^-(1_{b+\Gamma^+})=1 \iff b<a, \; \chi_t(1_{b+\Gamma^+})=1 \iff b<t $. 
For all $a,b \in \Gamma, a<b$ $f[a_+,b_-])=U_{a,b}$. 
\ \\ \ \\
Let $\ell \in \Gamma \cap (0,+\infty)$. Then, let $q^*$ define a canonical inclusions of $[0,\ell]$ into $[0_+,\ell_-]$ that sends $(a,b] \mapsto (a_+,b_-]$ for all $a,b \in \Gamma$ with $a<b$:
$$q^*:[0,\ell] \hookrightarrow [0_+,\ell_-] \quad t \mapsto \begin{cases} t & t \nin \Gamma \ \\
0_+ & t=0 \ \\
t_- & t \in \Gamma \setminus \{0\}
\end{cases}$$
\begin{lemma}[$IE(\Gamma)$ as a topological full group of a partial action]
Let $\Gamma$ be a countable, dense additive subgroup of $\mathbb{R}$ with $1 \in \Gamma$. Let $\alpha: \Gamma \acts \mathbb{R}_\Gamma $
be the canonical additive action of $\Gamma$ on $\mathbb{R}_\Gamma$ (i.e. given by for each $c \in \Gamma_+$
$\alpha_c(a_\pm)=(a+c)_\pm, \; \alpha_c(t)=t+c \; \forall a,c \in \Gamma, \; \forall t \nin \Gamma  $) and consider the 
restriction of $\alpha$ to a partial action on $[0_+,1_-]$. Then we have that \label{IE as a partial action}
 $$\mathsf{F}(\Gamma \ltimes_{\alpha} [0_+,1_-]) \cong IE(\Gamma)$$
\end{lemma}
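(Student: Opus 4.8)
The plan is to apply the description of the topological full group of an essentially free partial action from Lemma \ref{description of a tfg of a partial action}, and then to transport that description across the quotient map $q$ and its section $q^*$. First I would verify the hypotheses: the space $[0_+,1_-]$ is a Cantor set, and the additive action $\alpha$ is free on $\mathbb{R}_\Gamma$ (if $a+c=a$ then $c=0$), hence essentially free; moreover $q([0_+,1_-])=[0,1]$ meets every coset $t+\Gamma$, so every $\Gamma$-orbit meets the clopen set $[0_+,1_-]$ and the restriction is a genuine partial action. Lemma \ref{description of a tfg of a partial action} then identifies $\mathsf{F}(\Gamma\ltimes_\alpha[0_+,1_-])$ with the group $\mathsf{T}$ of homeomorphisms $g$ of $[0_+,1_-]$ admitting a finite clopen partition $[0_+,1_-]=\bigsqcup_{i=1}^n Y_i$ and $g_1,\dots,g_n\in\Gamma$ with $g|_{Y_i}=\alpha_{g_i}|_{Y_i}$; it remains to produce a group isomorphism $\mathsf{T}\cong IE(\Gamma)$.

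The easy direction is $\Psi\colon\mathsf{T}\to IE(\Gamma)$, $\Psi(g):=q\circ g\circ q^*$. Since each $Y_i$ is a finite union of basic clopen sets $(a_-,b_+)$ with $a,b\in\Gamma$, the pieces $q(Y_i)$ are finite unions of intervals with endpoints in $\Gamma$, and on each of them $\Psi(g)$ is the translation $t\mapsto t+g_i$; thus $\Psi(g)$ is a piecewise translation of $[0,1]$ by finitely many elements of $\Gamma$, i.e. an element of $IE(\Gamma)$ (bijectivity following from that of $g$ together with the injectivity of $q$ off $\Gamma_\pm$). Observe that every element in the image of $\Psi$ automatically has all of its breakpoints in $\Gamma$, so surjectivity of $\Psi$ is exactly the nontrivial content.

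The main obstacle is therefore the following key lemma: every $f\in IE(\Gamma)$ has all of its genuine breakpoints — the discontinuities of the step function $t\mapsto f(t)-t$ — in $\Gamma$. To prove it I would pass to the minimal partition $0=a_0<\dots<a_N=1$ of $f$ (so that consecutive pieces $[a_{i-1},a_i)$ carry distinct shifts $c_i\in\Gamma$) and exploit that the image intervals $[a_{i-1}+c_i,a_i+c_i)$ tile $[0,1)$. The leftmost image interval begins at $0\in\Gamma$, forcing the corresponding left endpoint into $\Gamma$; the matching condition at each junction gives $a_{\sigma(m)}\equiv a_{\sigma(m+1)-1}\pmod{\Gamma}$ for the reordering permutation $\sigma$. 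Propagating this through $\sigma$, one checks that any breakpoint left undetermined modulo $\Gamma$ would force two adjacent pieces to share a shift, contradicting minimality; hence every $a_i\in\Gamma$. Making this combinatorial propagation watertight, and isolating exactly where genuineness of the breakpoints is used, is the delicate point.

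Granting the lemma, I would build the inverse $\Phi\colon IE(\Gamma)\to\mathsf{T}$. Given $f\in IE(\Gamma)$ with breakpoints in $\Gamma$, define $\hat f$ on the dense set of points $t\notin\Gamma$ by $\hat f(t)=f(t)$ (note $f(t)=t+c\notin\Gamma$), and extend to the doubled points by $\hat f(a_\pm)=(a+\gamma_\pm)_\pm$, where $\gamma_\pm\in\Gamma$ are the one-sided shifts of $f$ at $a$; since $a+\gamma_\pm\in\Gamma$ these land in $\Gamma_\pm$. Precisely because the breakpoints are doubled, the jump discontinuities of $f$ on $[0,1]$ become continuity of $\hat f$ on $[0_+,1_-]$: on each clopen piece between consecutive breakpoints $\hat f$ agrees with a single $\alpha_\gamma$, so $\hat f$ is a homeomorphism lying in $\mathsf{T}$. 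Finally I would check that $\Phi$ is a homomorphism — the continuous extension of a composite is the composite of the extensions, by density of the non-$\Gamma$ points — and that $\Phi$ and $\Psi$ are mutually inverse via $f=q\circ\hat f\circ q^*$; these verifications are routine once the breakpoint lemma is established.
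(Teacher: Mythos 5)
Your map $\Psi=q\circ(\cdot)\circ q^*$ is exactly the paper's isomorphism $\varphi$, and your verification that it is a well-defined injective homomorphism matches the paper's proof. The problem lies in the step you yourself single out as the delicate point: the ``key lemma'' that every $f\in IE(\Gamma)$ has its breakpoints in $\Gamma$ is \emph{false} under the definition of $IE(\Gamma)$ as stated (only the angles are constrained). Concretely, choose $c\in\Gamma$ with $0<c<1/3$ and $\beta\in(0,1)\setminus\Gamma$ with $\beta+2c<1$, and let $f$ be the identity outside $[\beta,\beta+2c)$, with $f(t)=t+c$ on $[\beta,\beta+c)$ and $f(t)=t-c$ on $[\beta+c,\beta+2c)$. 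This is a piecewise linear bijection of $[0,1]$ with angle set $\{0,c,-c\}\subset\Gamma$, so it lies in $IE(\Gamma)$ as literally defined; its partition is already minimal (the consecutive shifts $0,c,-c,0$ are pairwise distinct at every junction); yet the three interior breakpoints $\beta,\beta+c,\beta+2c$ all lie outside $\Gamma$. This also shows exactly where your propagation argument breaks: the junction congruences only prove that breakpoints within one class are congruent modulo $\Gamma$ (indeed $\beta\equiv\beta+c\equiv\beta+2c\pmod{\Gamma}$ here), but nothing ever forces a class to meet $\Gamma$ --- only the class of the endpoint $0$ is pinned down. Your claim that an ``undetermined'' breakpoint forces two adjacent pieces to share a shift is refuted by this example.

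The consequence is that the obstacle you identified cannot be removed by a cleverer combinatorial argument: the above $f$ does not lift to any element of $\mathsf{F}(\Gamma\ltimes_\alpha[0_+,1_-])$ (a lift would be discontinuous at the undoubled point $\beta\in\mathbb{R}_\Gamma$, since both one-sided approaches converge to $\beta$ there), so with the angles-only definition the map $\Psi$ is not surjective and the Lemma itself is false. The statement is correct only under the convention --- standard in \cite{bon2018rigidity}, \cite{extensiveamenability}, \cite{chornyi2020topological}, and implicitly assumed throughout this paper --- that elements of $IE(\Gamma)$ are translations by elements of $\Gamma$ on finitely many intervals whose endpoints also lie in $\Gamma$. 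Under that reading your key lemma is part of the definition, your lifting construction $\Phi$ is precisely the (easy) surjectivity argument, and your proof collapses to the paper's, which never confronts the breakpoint question because, with the intended definition, there is nothing to prove. So the repair is definitional, not mathematical: drop the false lemma, record the convention on breakpoints, and the rest of your argument goes through (modulo the routine endpoint bookkeeping at $0$ and $1$ --- e.g.\ for the half-interval swap one has $qgq^*(0)=qgq^*(1)$ --- which your bijectivity claim for $\Psi$ also glosses over and which is best handled by working with right-continuous bijections of $[0,1)$).
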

\begin{proof}A basis of the compact open bisections of the groupoid $\Gamma \ltimes_{\alpha} [0_+,1_-]$ is given by:
$$ (c,[a_+,b_-]) \; c \in \Gamma \quad a,b \in \Gamma, \quad \max\{-c,0\} \leq a < b \leq \min\{1-c,1\}  $$
Elements of $\mathsf{F}(\Gamma \ltimes_{\alpha} [0_+,1_-])$ are homeomorphisms $f$ of $[0_+,1_-]$ for which there exists a finite subset $\{x_i\}_{i=1}^n \subset \Gamma$, with $0=x_1<x_2<...<x_n=1$, and elements $\{c_i\}_{i=1}^n \subset \Gamma$.
$$f|_{[(x_i)_+,(x_{i+1})_-]}=\alpha(c_i)|_{[(x_i)_+,(x_{i+1})_-]}$$

Hence, making use of $q,q^*$ we can obtain the isomorphism:
$$\varphi: \mathsf{F}(\Gamma \ltimes_{\alpha} [0_+,1_-])  \rightarrow IE(\Gamma) \quad g \mapsto qgq^* $$
Under $\varphi$, it is clear that for any element $f \in \mathsf{F}(\Gamma \ltimes_{\alpha} [0_+,1_-]) $, $\varphi(f)$ is a right continuous piecewise linear bijection of $[0,1]$ with finitely many angles $\{c_i\}_{i=1}^n \subset \Gamma$. Note this is a group homomorphism since $qq^*=Id_{[0,1]}$. If $\varphi(f)=1$, then $c_i=0$ for all $i$, hence $f \in \mathsf{F}(\Gamma \ltimes_{\alpha} [0_+,1_-])$ is the identity, hence $\varphi$ is an isomorphism.
\end{proof}Let us also remark that for $\alpha$, we have that $[0_+,\ell_-]$ is $\Gamma \ltimes \mathbb{R}_\Gamma$ full (by density, for all $x \in \mathbb{R}_\Gamma$, we can choose $a \in \Gamma$ such that $0<q(x)-a<\ell$). 
\begin{rmk}
 The above partial action is conjugate (via the homeomorphism $f$) to the groupoid model $\Gamma \ltimes O_{\Gamma^+ \subseteq \Gamma}|_{N(\Gamma^+)}^{N(\Gamma^+)}$ of $\mathcal{F}^\lambda$ in \cite{xinlambda}, Section 2.3. 
\end{rmk}

It is also relatively straightforward to see an alternative, equivalent description of $IE(\Gamma)$ in terms of a group action. 
\begin{lemma}[$IE(\Gamma)$ as the topological full group of an action]
   Let $\Gamma$ be a countable, dense additive subgroup of $\mathbb{R}$ with $1 \in \Gamma$. Then 
   $$\hat{\alpha}: \Gamma/\mathbb{Z} \acts [0_+,1_-]$$
   Let $[t]$ denote the equivalence class of $t \in \mathbb{R}$ ($\text{mod } \mathbb{Z}$). 
   $$\hat{\alpha}([c])(t)=[t+c] \quad \hat{\alpha}([c])(a_{\pm})=[a+c]_\pm$$
   We have that $IE(\Gamma)=\mathsf{F}(\Gamma \ltimes_{\hat{\alpha}} [0_+1_-] )$
   \label{IE as a global action}
\end{lemma}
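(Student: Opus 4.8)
The plan is to deduce the statement from the already-established Lemma~\ref{IE as a partial action} by showing that the \emph{global} transformation groupoid $\Gamma/\mathbb{Z} \ltimes_{\hat\alpha} [0_+,1_-]$ is isomorphic, as an \'etale groupoid, to the \emph{partial} transformation groupoid $\Gamma \ltimes_{\alpha} [0_+,1_-]$. Since isomorphic groupoids have isomorphic topological full groups, this immediately gives $\mathsf{F}(\Gamma \ltimes_{\hat\alpha} [0_+,1_-]) \cong \mathsf{F}(\Gamma \ltimes_\alpha [0_+,1_-]) \cong IE(\Gamma)$. The conceptual content is that, because $1 \in \Gamma$, the subgroup $\mathbb{Z} \leq \Gamma$ acts on $\mathbb{R}_\Gamma$ by integer translations, and $[0_+,1_-]$ is a clopen fundamental domain for this $\mathbb{Z}$-action: it meets each $\mathbb{Z}$-orbit exactly once, identifying $[0_+,1_-]$ with $\mathbb{R}_\Gamma/\mathbb{Z}$. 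Under this identification $\hat\alpha$ is precisely the action of $\Gamma/\mathbb{Z}$ induced on the quotient, so the desired isomorphism is an instance of the general principle that restricting a transformation groupoid to a clopen transversal for a normal subgroup recovers the quotient transformation groupoid.

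Concretely, I would define $\Phi \colon \Gamma \ltimes_\alpha [0_+,1_-] \to \Gamma/\mathbb{Z} \ltimes_{\hat\alpha} [0_+,1_-]$ on arrows by $(c,x) \mapsto ([c],x)$. This is well defined: if $(c,x)$ lies in the partial groupoid then $x, x+c \in [0_+,1_-]$, whence $\hat\alpha_{[c]}(x)=[x+c]$ is represented by $x+c \in [0_+,1_-]$, so $([c],x)$ is a genuine arrow with matching source and range. One checks readily that $\Phi$ preserves units, source, range, inversion $(c,x)^{-1}=(-c,x+c)$, and composition, since $\Phi\big((c,x+c')(c',x)\big)=([c+c'],x)=([c],x+c')([c'],x)$. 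The heart of the argument is that $\Phi$ is a bijection, and for this I would prove that two distinct points of $[0_+,1_-]$ can never differ by a nonzero integer: since $q([0_+,1_-])=[0,1]$, a nonzero integer difference forces the two points to lie over $0$ and $1$, that is, to be $0_+$ and $1_-$ (as $0_-,1_+ \notin [0_+,1_-]$), but $0_+ + n = n_+ \neq 1_-$ for every $n \in \mathbb{Z}$. Consequently, for each arrow $([c],x)$ of the global groupoid there is a \emph{unique} representative $c' \in \Gamma$ with $c' \equiv c \pmod{\mathbb{Z}}$ and $x+c' \in [0_+,1_-]$, namely the one with $x+c'=\hat\alpha_{[c]}(x)$; this yields both surjectivity and injectivity of $\Phi$.

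It then remains to see that $\Phi$ is a homeomorphism of \'etale groupoids. The basic compact open bisections $(c,[a_+,b_-])$ of the partial groupoid map onto the corresponding bisections $([c],[a_+,b_-])$, so $\Phi$ is continuous and open on a basis; for $\Phi^{-1}$ one covers each basic bisection of the global groupoid by clopen pieces on which the integer correction $c \mapsto c'$ is constant, which is possible because the transversal condition is locally constant across the clopen basis of $[0_+,1_-]$. I expect the main obstacle to be precisely this endpoint bookkeeping: verifying that $[0_+,1_-]$ is a genuine transversal and that the integer correction is locally constant across the seam at $0_+$ and $1_-$, where the cyclic order on $\mathbb{R}_\Gamma/\mathbb{Z}$ glues $1_- = 0_-$ to $0_+$. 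Once $\Phi$ is in hand the conclusion follows by transporting Lemma~\ref{IE as a partial action} through it. As a remark, the same identification shows $\hat\alpha$ is a free action (rotations by nonzero elements of $\Gamma/\mathbb{Z}$ are fixed-point-free) that is minimal (as $\Gamma/\mathbb{Z}$ is dense in the circle), consistent with the introduction; alternatively one could give a direct proof mirroring Lemma~\ref{IE as a partial action} using the description of the topological full group in Lemma~\ref{description of a tfg of a partial action}, but the groupoid isomorphism is cleaner.
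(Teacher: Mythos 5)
Your proposal is correct, but it takes a genuinely different route from the paper. The paper simply omits the proof, remarking that it follows verbatim the template of Lemma~\ref{IE as a partial action}: one describes elements of $\mathsf{F}(\Gamma/\mathbb{Z}\ltimes_{\hat\alpha}[0_+,1_-])$ as homeomorphisms that are piecewise given by rotations, and conjugates by $q,q^*$ to land in $IE(\Gamma)$. You instead prove an \'etale-groupoid isomorphism $\Gamma\ltimes_\alpha[0_+,1_-]\cong \Gamma/\mathbb{Z}\ltimes_{\hat\alpha}[0_+,1_-]$, $(c,x)\mapsto([c],x)$, and transport the earlier lemma through it; the key point, which you verify correctly, is that $[0_+,1_-]$ is a clopen fundamental domain for the $\mathbb{Z}$-translation action on $\mathbb{R}_\Gamma$ (two points of $[0_+,1_-]$ never differ by a nonzero integer, since that would force them to be $0_+$ and $1_-$, while $0_++n=n_+\neq 1_-$), so each arrow of the global groupoid has a unique representative arrow in the partial one. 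Your topological bookkeeping is also sound: the preimage of a basic bisection $([c],U)$ decomposes as $\bigsqcup_{n\in\mathbb{Z}}(c+n,U_n)$ with $U_n=\{x\in U: x+c+n\in[0_+,1_-]\}$ clopen, so the integer correction is indeed locally constant. The comparison cuts both ways. The paper's route is shorter on the page given that the template proof already exists, but it leaves the groupoid-level statement to be recovered afterwards: the paper deduces the conjugacy of the two groupoids as a corollary of the equality of topological full groups via the Matui--Rubin isomorphism theorem (Theorem~\ref{matui isomorphism theorem}), which requires the regularity properties (minimality, essential principality) established separately. Your argument inverts that logic and is in this respect stronger and more economical: it produces the groupoid conjugacy constructively and elementarily, makes the subsequent corollary in the paper immediate without invoking Matui--Rubin, and since your isomorphism is the identity on unit spaces, the induced identification of topological full groups as homeomorphism groups of $[0_+,1_-]$ is literally the identity, so the lemma's asserted equality (not merely abstract isomorphism) with $IE(\Gamma)$ follows directly from Lemma~\ref{IE as a partial action}.
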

We omit this proof since this perspective agrees with the perspective as in \cite{bon2018rigidity} \cite{extensiveamenability} \cite{chornyi2020topological}, and the proof of this follows the same proof as that of Lemma \ref{IE as a partial action}.

It is time to remark on some basic facts for these partial action groupoids:
\begin{lemma}[Regularity of $\alpha$] Let $\Gamma$ be a dense countable subgroup of $\mathbb{R}$ with $1 \in \Gamma$. Then
$\alpha:\Gamma \acts \mathbb{R}_\Gamma$ is a free, amenable, minimal action. 
\end{lemma}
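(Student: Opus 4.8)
The plan is to verify the three properties one at a time, using as the central device the fact that the quotient map $q \colon \mathbb{R}_\Gamma \to \mathbb{R}$ intertwines $\alpha$ with the ordinary translation action of $\Gamma$ on $\mathbb{R}$. Explicitly, from the defining formulas $\alpha_c(a_\pm) = (a+c)_\pm$ and $\alpha_c(t) = t+c$ one reads off $q(\alpha_c(x)) = q(x) + c$ for every $x \in \mathbb{R}_\Gamma$ and $c \in \Gamma$. Both freeness and minimality will be deduced from this single relation together with the density and countability of $\Gamma$, while amenability is purely formal.

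For freeness, suppose $\alpha_c(x) = x$ for some $x \in \mathbb{R}_\Gamma$ and $c \in \Gamma$. Applying $q$ gives $q(x) + c = q(x)$, hence $c = 0$; so no nonidentity element of $\Gamma$ fixes any point, and $\alpha$ is free. For amenability I would simply invoke the general principle recorded in the preliminaries: $\Gamma$ is a subgroup of the abelian group $\mathbb{R}$, hence amenable, and an action of an amenable group is amenable.

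Minimality is the substantive part. Since the sets $[a_+, b_-]$ with $a, b \in \Gamma$, $a < b$, form a basis, it suffices to show that the orbit of an arbitrary $x \in \mathbb{R}_\Gamma$ meets every such set. By density of $\Gamma$ the interval $(a - q(x), b - q(x))$ contains some $c \in \Gamma$, and then $q(\alpha_c(x)) = q(x) + c$ lies strictly between $a$ and $b$. The one point requiring care is the bookkeeping of the total order on the doubled points when concluding membership in $[a_+, b_-]$; this is exactly where the definition of $\mathbb{R}_\Gamma$ could trip one up. It resolves uniformly, however, by the elementary observation that whenever a point $z \in \mathbb{R}_\Gamma$ satisfies $a < q(z) < b$ one has $a_+ < z < b_-$ regardless of whether $z$ is a non-$\Gamma$ point, a $+$-point, or a $-$-point, since a strict inequality on $q$-values is never spoiled by the convention $a_- < a_+$. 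Thus $\alpha_c(x) \in (a_+, b_-) \subseteq [a_+, b_-]$, every orbit is dense, and $\alpha$ is minimal. I expect this order-bookkeeping to be the only genuine obstacle, and it is mild once the uniform observation above is isolated.
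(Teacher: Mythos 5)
Your proof is correct, and freeness and amenability are handled exactly as in the paper (push forward along $q$, respectively invoke amenability of the abelian group $\Gamma$). Where you genuinely diverge is minimality. The paper proves it by first establishing convergence rules for sequences in $\mathbb{R}_\Gamma$ --- a sequence converges to $x_+$ iff its $q$-values converge to $q(x)$ from above, to $x_-$ iff from below, and to a non-doubled point iff the $q$-values converge to it --- and then uses density of $q(x)+\Gamma$ in $\mathbb{R}$ to manufacture sequences in the orbit approaching any prescribed point from the appropriate side. You instead show directly that every orbit meets every basic clopen set $[a_+,b_-]$: density of $\Gamma$ produces $c$ with $a < q(x)+c < b$, and your key observation --- that a strict inequality of $q$-values already forces the order relation in $\mathbb{R}_\Gamma$, so $a < q(z) < b$ implies $a_+ < z < b_-$ for any type of point $z$ --- finishes it. Your route is the more economical one: it dispenses with sequences entirely and collapses the paper's three-way case analysis (approach from above, from below, at a generic point) into a single uniform order argument, at the cost of leaning on the clopen basis established in the preceding lemma. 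The paper's sequence formulation, conversely, gives a finer picture of the topology of $\mathbb{R}_\Gamma$ near the doubled points, which is useful intuition even though it is not reused later. Both arguments rest on the same engine, namely density of the translate $q(x)+\Gamma$ in $\mathbb{R}$.
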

\begin{proof}
 The action is amenable since $\Gamma$ is an amenable group. Suppose for some $c \in \Gamma $, and some $x \in \mathbb{R}_\Gamma$, $\alpha(c)(x)=x$. Then in particular in $\mathbb{R}$, $q(\alpha(c)x)=q(x)-c=q(x) \Rightarrow c=0$. This establishes that $\alpha$ is free. \ \\
Now let us establish minimality. First let us remark the following convergence rules in $\mathbb{R}_\Gamma$:
$$\lim_{n \rightarrow \infty} x_n=x_+ \iff \lim_{n \rightarrow \infty} q(x_n)=q(x) \text{ from above }$$
$$\lim_{n \rightarrow \infty } x_n=x_- \iff lim_{n \rightarrow \infty} q(x_n)=q(x) \text{ from below }$$
$$ \lim_{n \rightarrow \infty } x_n=x \text{ s.t. } x \nin \Gamma_\pm \iff \lim_{n \rightarrow \infty } q(x_n)=x$$
For all $x \in \mathbb{R}_\Gamma$, the image of the orbit $\Gamma x$ under $q$, $q(\Gamma x)=q(x)+ \Gamma \subset \mathbb{R}$ is dense in $\mathbb{R}$, so we can find sequences in $q(x)+\Gamma$ tending to any $x' \in \Gamma$ from above or below, and sequences approximating any $x' \nin \Gamma$ in $q(\Gamma x)$. Minimality follows.  
\end{proof}
\begin{corollary} Let $\Gamma$ be a dense countable subgroup of $\mathbb{R}$ with $1 \in \Gamma$.
As a groupoid, $\Gamma \ltimes_\alpha [0_+,1_-]$ is minimal, principal, and amenable. \label{groupoid regularity}
\end{corollary}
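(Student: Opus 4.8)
The plan is to derive all three properties from the regularity of the full action $\alpha \colon \Gamma \acts \mathbb{R}_\Gamma$ just established, using the observation (made in the remark following Lemma \ref{IE as a partial action}) that $[0_+,1_-]$ is a \emph{full} clopen subset of $\mathbb{R}_\Gamma$. The point to keep in mind throughout is that $\Gamma \ltimes_\alpha [0_+,1_-]$ is by definition the restriction $\Gamma \ltimes \mathbb{R}_\Gamma |_{[0_+,1_-]}^{[0_+,1_-]}$, i.e. an open subgroupoid of the global transformation groupoid obtained by reducing to the clopen set $[0_+,1_-]$. Thus each property I want is the partial-action analogue of one of the dictionary items listed in the Preliminaries.

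Principality and amenability I would dispatch first, as they require no new work. For principality, note that the isotropy of a unit $x \in [0_+,1_-]$ in the subgroupoid is contained in its isotropy in $\Gamma \ltimes \mathbb{R}_\Gamma$; equivalently, the restricted action remains free, since $\alpha(c)(x)=x$ with $x \in [0_+,1_-]$ forces $c=0$ by freeness of $\alpha$. The dictionary (free $\iff$ principal) then gives principality. For amenability, $\Gamma$ is abelian and hence amenable, so the partial transformation groupoid is amenable; alternatively, amenability passes to open subgroupoids, and $\Gamma \ltimes \mathbb{R}_\Gamma$ is amenable.

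The one step needing care is minimality, so this is where I would spend the effort. The key reduction is to identify the partial orbit: a unit $x' \in [0_+,1_-]$ lies in the $\Gamma \ltimes_\alpha [0_+,1_-]$-orbit of $x$ precisely when $x' = \alpha(c)(x)$ for some $c \in \Gamma$ with both $x$ and $x'$ in $[0_+,1_-]$, so the partial orbit of $x$ is exactly $\Gamma x \cap [0_+,1_-]$. By minimality of the full action, $\Gamma x$ is dense in $\mathbb{R}_\Gamma$; since $[0_+,1_-]$ is open (indeed clopen), the elementary topological fact that a dense set meets every nonempty open subset in a set dense in that subset gives that $\Gamma x \cap [0_+,1_-]$ is dense in $[0_+,1_-]$. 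Fullness guarantees this orbit is nonempty to begin with. Hence every orbit of $\Gamma \ltimes_\alpha [0_+,1_-]$ is dense in its unit space, which is exactly minimality of the groupoid. The main (mild) obstacle is thus purely this passage from density in $\mathbb{R}_\Gamma$ to density inside the clopen reduction $[0_+,1_-]$, which the openness of $[0_+,1_-]$ resolves cleanly.
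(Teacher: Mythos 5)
Your proof is correct and takes essentially the same route as the paper: the paper derives this corollary (without further argument) from the preceding lemma on regularity of $\alpha$ together with the action--groupoid dictionary stated in the preliminaries for (partial) transformation groupoids. Your only addition is to spell out the routine passage from the global action to its restriction to the clopen set $[0_+,1_-]$ --- freeness restricts trivially, amenability of $\Gamma$ handles amenability, and density of $\Gamma x$ in $\mathbb{R}_\Gamma$ intersected with the open set $[0_+,1_-]$ gives density of partial orbits --- which is exactly what the paper leaves implicit (your appeal to fullness is harmless but unnecessary, since the orbit of a unit $x \in [0_+,1_-]$ already contains $x$).
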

Note in particular as a Corollary of Matui's isomorphism Theorem (Theorem \ref{matui isomorphism theorem}), and the regularity established for $\alpha$, $\hat{\alpha}$ the identification of the topological full groups implies the groupoids coming from $\alpha$ and $\hat{\alpha}$ are conjugate. 
\begin{corollary} Let $\Gamma$ be a dense countable subgroup of $\mathbb{R}$ with $1 \in \Gamma$.
    As groupoids, $\Gamma/\mathbb{Z} \ltimes_{\hat{\alpha}} [0_+,1_-] \cong \Gamma \ltimes_\alpha [0_+,1_-] $
\end{corollary}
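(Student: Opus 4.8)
The plan is to deduce this isomorphism as a direct application of the Matui--Rubin isomorphism theorem (Theorem \ref{matui isomorphism theorem}), exploiting that we have already identified both topological full groups with the single abstract group $IE(\Gamma)$. Recall that Matui--Rubin asserts that for essentially principal, \'etale, minimal Cantor groupoids, an abstract isomorphism of topological full groups forces an isomorphism of the underlying groupoids. Hence the whole argument reduces to checking that both $\Gamma \ltimes_\alpha [0_+,1_-]$ and $\Gamma/\mathbb{Z} \ltimes_{\hat{\alpha}} [0_+,1_-]$ satisfy these hypotheses, and that their topological full groups coincide.

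For the first groupoid the required regularity — minimality, principality (hence essential principality), together with the Cantor and \'etale structure — is precisely the content of Corollary \ref{groupoid regularity}. For the second groupoid I would verify the analogous properties for the global action $\hat{\alpha}$ of $\Gamma/\mathbb{Z}$. Ampleness and \'etaleness are automatic, since $\hat{\alpha}$ is an action of a discrete group on the Cantor space $[0_+,1_-]$. Freeness (hence principality) follows because a class $[c] \in \Gamma/\mathbb{Z}$ with $[c] \neq 0$ is represented by some $c \in \Gamma \setminus \mathbb{Z}$, and the induced rotation $t \mapsto [t+c]$ of $[0_+,1_-]$ has no fixed point: pushing through $q$ exactly as in Corollary \ref{groupoid regularity}, the relation $\hat{\alpha}([c])(x)=x$ would give $q(x)+c \equiv q(x) \pmod{\mathbb{Z}}$, forcing $c \in \mathbb{Z}$ and so $[c]=0$. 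Minimality follows from density of $\Gamma$ in $\mathbb{R}$: the orbit $q(x)+\Gamma$ is dense modulo $\mathbb{Z}$, and the convergence rules for approaching a point from above or below used in Corollary \ref{groupoid regularity} then show that each $\hat{\alpha}$-orbit is dense in $[0_+,1_-]$.

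With the hypotheses in place, Lemma \ref{IE as a partial action} and Lemma \ref{IE as a global action} yield group isomorphisms $\mathsf{F}(\Gamma \ltimes_\alpha [0_+,1_-]) \cong IE(\Gamma) \cong \mathsf{F}(\Gamma/\mathbb{Z} \ltimes_{\hat{\alpha}} [0_+,1_-])$, so the two topological full groups are abstractly isomorphic. Applying Theorem \ref{matui isomorphism theorem} then produces the desired groupoid isomorphism. The only genuine work is confirming the regularity of $\hat{\alpha}$; everything else is bookkeeping with the already-established identifications, so I expect no serious obstacle. As an alternative one could instead write down the isomorphism by hand: both groupoids share the unit space $[0_+,1_-]$ and are principal, and the reduction map $c \mapsto [c]$ carries the unique arrow $(c,x)$ of the partial action to the unique arrow $([c],x)$ of the global action, which should restrict to a homeomorphism on each compact open bisection. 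However, checking continuity bisection-by-bisection is more laborious than simply invoking Matui--Rubin, so I would present the latter as the proof.
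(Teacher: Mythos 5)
Your proposal is correct and follows the paper's own argument: the paper likewise deduces this corollary from the Matui--Rubin isomorphism theorem (Theorem \ref{matui isomorphism theorem}), combined with the regularity of $\alpha$ and $\hat{\alpha}$ and the identification of both topological full groups with $IE(\Gamma)$ via Lemmas \ref{IE as a partial action} and \ref{IE as a global action}. Your explicit verification of freeness and minimality for $\hat{\alpha}$ is a useful filling-in of a step the paper leaves implicit, but it is the same route.
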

This identification establishes that the groupoid is almost finite. 
 \begin{lemma}
 Let $\Gamma$ be a dense countable subgroup of $\mathbb{R}$ with $1 \in \Gamma$. Then the groupoid  $\Gamma \ltimes_\alpha [0_+,1_-]$ is an almost finite groupoid. 
 \end{lemma}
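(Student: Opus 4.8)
The plan is to avoid constructing elementary subgroupoids by hand and instead reduce to a known almost finiteness result for amenable group actions. Since almost finiteness in the sense of Matui \cite{matui2014topological} is an isomorphism invariant of \'etale groupoids, the preceding Corollary (which gives $\Gamma \ltimes_\alpha [0_+,1_-] \cong \Gamma/\mathbb{Z} \ltimes_{\hat{\alpha}} [0_+,1_-]$) lets me work with whichever model is more convenient. I would pass to the \emph{global} action $\hat{\alpha}: \Gamma/\mathbb{Z} \acts [0_+,1_-]$, since this is an honest transformation groupoid of a group action rather than a partial one. Write $G := \Gamma/\mathbb{Z}$ and $X := [0_+,1_-]$; it then suffices to show that $G \ltimes_{\hat{\alpha}} X$ is almost finite.

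Next I would collect the relevant regularity of this action. The group $G = \Gamma/\mathbb{Z}$ is a countably infinite abelian group (infinite because $\Gamma$ is dense), hence amenable; $X$ is the Cantor set, so in particular it is zero-dimensional and automatically has the small boundary property (every clopen set has empty boundary). Freeness transfers from $\alpha$ through the conjugacy, but one can also see it directly: $\hat{\alpha}([c])$ fixes a point $t$ only if $t + c \equiv t \pmod{\mathbb{Z}}$, i.e. $c \in \mathbb{Z}$, so $[c]$ is trivial in $G$. Minimality was already established for both $\alpha$ and $\hat{\alpha}$. Thus $\hat{\alpha}$ is a free minimal action of a countable amenable group on the Cantor set.

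Finally I would invoke the existence of clopen F\o lner tower (castle) decompositions for such actions. Concretely, Matui's definition of almost finiteness asks that for every compact $C \subseteq G \ltimes_{\hat{\alpha}} X$ and every $\epsilon > 0$ there is an elementary (clopen, principal) subgroupoid $\mathcal{K}$ with $|C\mathcal{K}x \setminus \mathcal{K}x| / |\mathcal{K}x| < \epsilon$ for all units $x$. For a free action of a countable amenable group on a zero-dimensional compact metrizable space, exactly such clopen F\o lner castles exist: this is the content of the amenable clopen tiling results of Downarowicz--Huczek--Zhang and of Conley--Jackson--Kerr--Marks--Seward--Tucker-Drob, and of the almost finiteness/small boundary property framework of Kerr--Szab\'o. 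Shrinking $C$ to the finite set of group elements it involves and applying a F\o lner tiling adapted to $(C,\epsilon)$ produces the required $\mathcal{K}$, so $G \ltimes_{\hat{\alpha}} X$, and hence $\Gamma \ltimes_\alpha [0_+,1_-]$, is almost finite.

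The main obstacle is that $\Gamma$, and therefore $G = \Gamma/\mathbb{Z}$, need not be finitely generated; when $\Gamma$ has infinite rank the associated transformation groupoid can have infinite dynamic asymptotic dimension, so the finite-dimensionality route of Guentner--Willett--Yu is unavailable and one genuinely needs almost finiteness for arbitrary countable amenable group actions rather than a convenient finitely generated special case. The hands-on alternative would be to build Kakutani--Rokhlin towers directly from the interval-exchange structure of $\mathbb{R}_\Gamma$, but for $G$ of unbounded rank this is in effect a reproof of the F\o lner tiling theorem, so citing the amenable result is the cleanest path.
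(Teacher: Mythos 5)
Your proposal is correct and takes essentially the same route as the paper: the paper's proof likewise uses the conjugacy $\Gamma \ltimes_\alpha [0_+,1_-] \cong \Gamma/\mathbb{Z} \ltimes_{\hat{\alpha}} [0_+,1_-]$ to view the groupoid as arising from a free minimal action of the countable amenable group $\Gamma/\mathbb{Z}$ on the Cantor set, and then cites the Kerr--Szab\'o almost finiteness result ([\cite{kerr2020almost}, Theorem C]). The only caveat is that the ``exact clopen F\o lner castle'' statement you attribute to the tiling literature is not known for arbitrary countable amenable groups (those results give almost finiteness in measure, and the upgrade to almost finiteness requires comparison), but this is harmless here because $\Gamma/\mathbb{Z}$ is abelian, hence of local polynomial growth, where comparison holds --- and the paper's own one-line citation glosses over exactly the same point.
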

 \begin{proof}
Using that $\Gamma \ltimes_\alpha [0_+,1_-] \cong \Gamma / \mathbb{Z} \ltimes_{\hat{\alpha}} [0_+,1_-]$
can be written as the transformation groupoid of a free minimal action of $\Gamma/\mathbb{Z}$ (a countable amenable group) on the Cantor space. Moreover, since $\Gamma/\mathbb{Z}$ is in particular abelian, all finitely generated subgroups have polynomial growth. Free minimal actions of such groups were shown to be almost finite in [\cite{kerr2020almost}, Theorem C], and so the underlying groupoid is almost finite. 
 \end{proof}
 We then fit into the scope of Theorem \ref{d simple}, identifying $\mathsf{A}(\Gamma \ltimes_\alpha [0_+,1_-])$ with $\mathsf{D}(\Gamma \ltimes_\alpha [0_+,1_-])$ and establishing simplicity.
\begin{corollary}
Let $\Gamma$ be a dense countable subgroup of $\mathbb{R}$ with $1 \in \Gamma$. Then  $D(IE(\Gamma))=\mathsf{A}(\Gamma \ltimes_\alpha [0_+,1_-])$ is simple.  \label{derived ie is simple}
\end{corollary}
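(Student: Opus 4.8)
The plan is to recognise this as a direct application of Theorem \ref{d simple} to the groupoid $\G := \Gamma \ltimes_\alpha [0_+,1_-]$, so that the work consists entirely of checking that $\G$ meets the hypotheses of that theorem and then translating the conclusion back into the language of $IE(\Gamma)$. Recall that Theorem \ref{d simple} requires $\G$ to be an almost finite, effective Cantor groupoid, and asserts the equivalence of minimality of $\G$ with simplicity of $\mathsf{D}(\G)$, together with the bonus identification $\mathsf{D}(\G) \cong \mathsf{A}(\G)$ in the minimal case.

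First I would collect the regularity properties already established for $\G$. By the preceding Lemma, $\G$ is almost finite. By Corollary \ref{groupoid regularity} it is minimal, principal and amenable; since $\G$ is second countable and Hausdorff, being principal makes it essentially principal and hence effective, as recorded in the Preliminaries. Its unit space is $[0_+,1_-]$, which is a Cantor set by the earlier Lemma describing the topology of $\mathbb{R}_\Gamma$, so $\G$ is a Cantor groupoid. Finally, minimality together with the infiniteness of the Cantor space forces every $\G$-orbit to be infinite, since a finite orbit could never be dense; this is precisely the hypothesis needed for $\mathsf{A}(\G)$ to be defined.

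With all hypotheses in place, Theorem \ref{d simple} applies directly: minimality of $\G$ yields that $\mathsf{D}(\G)$ is simple and that $\mathsf{D}(\G) \cong \mathsf{A}(\G)$. It then remains only to identify $\mathsf{D}(\G)$ with $D(IE(\Gamma))$. This is immediate from Lemma \ref{IE as a partial action}, which provides the isomorphism $\mathsf{F}(\G) \cong IE(\Gamma)$; since $\mathsf{D}(\G)$ is by definition the derived subgroup of $\mathsf{F}(\G)$, this isomorphism carries it onto $D(IE(\Gamma))$, giving both the equality $D(IE(\Gamma)) = \mathsf{A}(\G)$ and its simplicity.

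I do not expect a genuine obstacle at this stage, since every structural input has been proved in the preceding results; the only points demanding a word of care are the passage from principal to effective (which uses second countability and Hausdorffness) and the observation that the orbits are infinite, both of which are routine. The real substance of the statement resides in the earlier verification of almost finiteness and minimality rather than in this final assembly.
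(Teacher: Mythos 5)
Your proposal is correct and follows essentially the same route as the paper: the paper likewise obtains this corollary by combining the almost finiteness lemma and Corollary \ref{groupoid regularity} to place $\Gamma \ltimes_\alpha [0_+,1_-]$ within the scope of Theorem \ref{d simple}, and then transports the conclusion through the identification $\mathsf{F}(\Gamma \ltimes_\alpha [0_+,1_-]) \cong IE(\Gamma)$ of Lemma \ref{IE as a partial action}. Your additional checks (principal implies effective in the second countable Hausdorff setting, and minimality forcing infinite orbits) are exactly the routine verifications the paper leaves implicit.
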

Another corollary we have as a consequence of almost finiteness is that the associated crossed product is $\mathcal{Z}$-stable. In fact, we now have enough to say that the associated crossed product is classifiable. When we say classifiable, we mean in the sense of the Elliott classification program see [\cite{tikuisis2017quasidiagonality}, Corollary D]. Here we substitute finite nuclear dimension for $\mathcal{Z}$-stability by using [\cite{castillejos2021nuclear}, Theorem A]
\begin{theorem}[Classification] \label{classification}
Let $A,B$ be simple, unital, separable, nuclear, infinite dimensional, $\mathcal{Z}$-stable C$^*$-algebras satisfying the UCT. Then, $$A \cong B\iff Ell(A) \cong Ell(B)$$
\end{theorem}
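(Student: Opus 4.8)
The plan is to recognise this statement as the endpoint of the Elliott classification program rather than something to be proved from first principles; accordingly, the ``proof'' is an assembly of deep external results together with one routine observation. First I would dispose of the forward implication $A \cong B \Rightarrow Ell(A) \cong Ell(B)$, which is purely functorial. The Elliott invariant $Ell(-)=\bigl(K_0(-),K_0(-)^+,[1],K_1(-),T(-),\rho\bigr)$ is a functor on the category of C$^*$-algebras, so any $*$-isomorphism $A \to B$ transports to an isomorphism of the invariants; no hypothesis beyond the existence of the isomorphism is used here.

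The substance lies in the reverse implication, and the plan is to match the stated hypotheses to those of the finite-nuclear-dimension classification theorem and then invoke it. The algebras $A,B$ are simple, separable, unital, nuclear, infinite dimensional (hence nonelementary) and $\mathcal{Z}$-stable. By [\cite{castillejos2021nuclear}, Theorem A], for such algebras $\mathcal{Z}$-stability forces finite nuclear dimension (indeed nuclear dimension at most $1$), so this converts the $\mathcal{Z}$-stability hypothesis into a finite-nuclear-dimension hypothesis while preserving simplicity, separability, unitality, nuclearity and the UCT. With finite nuclear dimension now in hand, I would apply [\cite{tikuisis2017quasidiagonality}, Corollary D], whose content is precisely that simple, separable, unital, nuclear C$^*$-algebras of finite nuclear dimension satisfying the UCT are classified up to isomorphism by their Elliott invariant. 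An isomorphism $Ell(A) \cong Ell(B)$ thus lifts to a $*$-isomorphism $A \cong B$, completing the equivalence.

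The main obstacle is really one of attribution rather than technique: essentially all of the hard work is external, being the culmination of the structure and classification results of Gong--Lin--Niu, Elliott--Gong--Lin--Niu, the quasidiagonality theorem feeding [\cite{tikuisis2017quasidiagonality}, Corollary D], and the nuclear-dimension bound of [\cite{castillejos2021nuclear}, Theorem A]. The only thing genuinely to verify locally is that the hypotheses line up across these inputs --- in particular that ``infinite dimensional'' supplies the nonelementarity needed for the Castillejos et al.\ reduction, and that the UCT persists through that reduction. The step most worth stating carefully is the passage from $\mathcal{Z}$-stability to finite nuclear dimension, since it is the hinge connecting the hypothesis as phrased to the form in which the classification theorem is actually available.
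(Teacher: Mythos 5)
Your proposal is correct and matches the paper's treatment exactly: the paper also states this as an external result, citing [\cite{tikuisis2017quasidiagonality}, Corollary D] for the classification under finite nuclear dimension and [\cite{castillejos2021nuclear}, Theorem A] to substitute $\mathcal{Z}$-stability for finite nuclear dimension, which is precisely the hinge you identify. Your added remarks on functoriality of the forward direction and on infinite dimensionality supplying nonelementarity are sound but routine elaborations of the same argument.
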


\begin{lemma}
 Let $\Gamma$ be a dense countable subgroup of $\mathbb{R}$ with $1 \in \Gamma$. Then the C$^*$ algebra given by
$C([0_+,1_-])\ltimes_\alpha \Gamma$ is classifiable.  
\label{its classifiable}
   
\end{lemma}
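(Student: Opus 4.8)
The plan is to verify directly that $C([0_+,1_-])\ltimes_\alpha \Gamma$ satisfies every hypothesis of the classification theorem (Theorem \ref{classification}). First I would note that this crossed product is precisely the reduced groupoid C*-algebra $C^*_r(\Gamma \ltimes_\alpha [0_+,1_-])$, and that since $\alpha$ is amenable by Corollary \ref{groupoid regularity} the reduced and full groupoid C*-algebras coincide; this lets me pass freely between the crossed-product and groupoid pictures. The task then reduces to checking that the algebra is unital, separable, nuclear, satisfies the UCT, is simple, is infinite-dimensional, and is $\mathcal{Z}$-stable.

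Most of these are immediate from the structure already assembled. Unitality holds because the unit space $[0_+,1_-]$ is compact, so $C([0_+,1_-])$ is unital and the crossed product inherits a unit. Separability follows since $[0_+,1_-]$ is a second-countable Cantor set and $\Gamma$ is countable. Nuclearity and membership in the bootstrap (UCT) class both follow from amenability of the groupoid, via Tu's theorem that an amenable \'etale groupoid has nuclear C*-algebra lying in the bootstrap class. Simplicity follows from the combination of minimality and principality recorded in Corollary \ref{groupoid regularity}: a minimal, effective (here even principal), amenable, Hausdorff \'etale groupoid has simple reduced C*-algebra. Infinite-dimensionality is clear, since the orbits are infinite (indeed dense) and the unit space is a Cantor set, so the algebra cannot be finite-dimensional.

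The one substantive input, and the step I expect to be the crux, is $\mathcal{Z}$-stability. Here I would invoke the almost finiteness of $\Gamma \ltimes_\alpha [0_+,1_-]$ established above, which came from realising it as the transformation groupoid of a free, minimal action of the countable amenable group $\Gamma/\mathbb{Z}$ on the Cantor space and applying [\cite{kerr2020almost}, Theorem C]. The key theorem to apply is that the reduced C*-algebra of a minimal, second-countable, almost finite ample groupoid (equivalently, the crossed product of such a free minimal almost finite action of a countable amenable group on the Cantor set) is $\mathcal{Z}$-stable. With $\mathcal{Z}$-stability in hand, every hypothesis of Theorem \ref{classification} is met and the result follows. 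As the surrounding discussion indicates, one may equivalently route through finite nuclear dimension by combining $\mathcal{Z}$-stability with [\cite{castillejos2021nuclear}, Theorem A] to feed the quasidiagonality input of [\cite{tikuisis2017quasidiagonality}, Corollary D]; either way, almost finiteness is the decisive ingredient that promotes the purely structural facts to classifiability.
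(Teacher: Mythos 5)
Your proposal is correct and follows essentially the same route as the paper: both verify the hypotheses of Theorem \ref{classification} by reading unitality, separability, nuclearity and simplicity off Corollary \ref{groupoid regularity}, obtaining the UCT from amenability via Tu, and deriving $\mathcal{Z}$-stability from almost finiteness (the paper cites [\cite{kerr2020dimension}, Theorem 12.4] for this last step, which is the result you describe). Your additional explicit checks (reduced equals full, infinite-dimensionality) are harmless elaborations of the same argument.
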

\begin{proof}
  $\mathcal{Z}$-stability is established by almost finiteness via [\cite{kerr2020dimension}, Theorem 12.4]. Let us note that the group $\Gamma/\mathbb{Z}$ is always abelian, so, in particular, has polynomial growth. By construction, this is the groupoid C*-algebra of an amenable groupoid hence by [\cite{tu1999conjecture}, Proposition 10.7]  they satisfy the UCT. The other classifiability conditions can be read off from Corollary \ref{groupoid regularity}; the groupoid is minimal and principal (implying simplicity), with a compact unit space (implying unital), second countable (implying separable), and amenable (implying nuclearity).
\end{proof}
It would therefore be convenient to compute the Elliott invariant in this case. 
 
\begin{lemma}[Li]
Let $\Gamma$ be a dense countable subgroup of $\mathbb{R}$ with $1 \in \Gamma$. The Elliott invariant for $C([0_+,1_-])\ltimes_\alpha \Gamma$ is as follows. 
$$(K_0(C([0_+,1_-])\ltimes_\alpha \Gamma),[1]_0, K_1(C([0_+,1_-])\ltimes_\alpha \Gamma  ) \cong (K_1(C_r^*(\Gamma), [U_1]_1, K_0(C_r^*(\Gamma)/\mathbb{Z}[1]_0)$$ 
Where $[U_{1}]_1$ here is a unique trace $\tau$ on $K_0$ satisfying $\tau(K_0(C([0_+,1_-])\ltimes_\alpha \Gamma )))=\Gamma$. 
\ \\
In particular, $C([0_+,1_-])\ltimes_\alpha \Gamma \cong C([0_+,1_-]) \ltimes_\alpha \Gamma' \implies \Gamma=\Gamma'$ (as subsets of $\mathbb{R}$). 
\label{elliot invariant}
\end{lemma}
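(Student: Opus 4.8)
The plan is to read the Elliott invariant off Li's groupoid model in two independent pieces: the ordered $K$-groups with the class of the unit, and the (unique) trace together with its pairing with $K_0$. By the remark following Lemma~\ref{IE as a partial action}, the groupoid $\Gamma \ltimes_\alpha [0_+,1_-]$ is conjugate, via the homeomorphism $f$, to the model $\Gamma \ltimes O_{\Gamma^+ \subseteq \Gamma}|_{N(\Gamma^+)}^{N(\Gamma^+)}$ of \cite{xinlambda}; hence the reduced groupoid C$^*$-algebras are isomorphic and it suffices to compute the invariant there. Moreover, since $[0_+,1_-]$ is a full clopen subset of the unit space of $\Gamma \ltimes \mathbb{R}_\Gamma$ (noted after Lemma~\ref{IE as a partial action}), the algebra $C([0_+,1_-])\ltimes_\alpha \Gamma$ is a full corner $1_{[0_+,1_-]}\,(C_0(\mathbb{R}_\Gamma)\rtimes \Gamma)\,1_{[0_+,1_-]}$ of $C_0(\mathbb{R}_\Gamma)\rtimes \Gamma$, so the two share the same $K$-theory and affinely homeomorphic trace spaces. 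This lets me compute on whichever presentation is most convenient.

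For the $K$-groups I would invoke Li's computation in \cite{xinlambda}. The underlying point is that replacing $\mathbb{R}$ by the doubled line $\mathbb{R}_\Gamma$ and crossing by the translation action of $\Gamma$ produces a degree shift relative to $C_r^*(\Gamma)$, together with the removal of the single copy of $\mathbb{Z}$ generated by $[1]_0$, giving $K_0(C([0_+,1_-])\ltimes_\alpha \Gamma)\cong K_1(C_r^*(\Gamma))$ and $K_1(C([0_+,1_-])\ltimes_\alpha \Gamma)\cong K_0(C_r^*(\Gamma))/\mathbb{Z}[1]_0$. As a check, for $\Gamma\cong\mathbb{Z}^n$ one has $C_r^*(\Gamma)=C(\mathbb{T}^n)$ with $K_0=K_1=\mathbb{Z}^{2^{n-1}}$, so the formulas return $(K_0,K_1)=(\mathbb{Z}^{2^{n-1}},\mathbb{Z}^{2^{n-1}-1})$, consistent with the homological computations of the paper. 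This is the step that leans entirely on the semigroup-C$^*$-algebra machinery of \cite{xinlambda}, and I expect it to be the main obstacle; the remaining ingredients are soft.

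It remains to pin down the trace and its pairing, which is the part producing rigidity. First I would observe that the action $\hat\alpha:\Gamma/\mathbb{Z}\acts[0_+,1_-]$ is uniquely ergodic: any invariant Borel probability measure pushes forward under $q$ to a translation-invariant Radon measure on $\mathbb{R}$, and invariance under the \emph{dense} subgroup $\Gamma$ forces full $\mathbb{R}$-invariance (approximate $t\in\mathbb{R}$ by $\gamma_n\in\Gamma$ and use uniform continuity of compactly supported functions), hence the measure is a multiple of Lebesgue measure $\mu$, with $\mu([a_+,b_-])=b-a$. Unique ergodicity gives a unique normalised trace $\tau$ on the simple unital algebra $C([0_+,1_-])\ltimes_\alpha \Gamma$, fixed by $\tau([1]_0)=1$. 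Since $K_0$ is generated by the classes of the projections $1_{[a_+,b_-]}$ and their translates, the range $\tau(K_0)$ is the subgroup of $\mathbb{R}$ generated by $\{b-a : a,b\in\Gamma\cap[0,1],\ a<b\}$; as $\Gamma$ is a subgroup containing $1$ this is exactly $\Gamma$. Thus $\tau(K_0)=\Gamma$ as a \emph{subset} of $\mathbb{R}$.

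Finally I would assemble the rigidity statement. An isomorphism $C([0_+,1_-])\ltimes_\alpha \Gamma\cong C([0_+,1_-])\ltimes_\alpha \Gamma'$ induces an isomorphism of Elliott invariants carrying $[1]_0$ to $[1]_0$, hence carrying the unique normalised trace to the unique normalised trace. Because the normalisation $\tau([1]_0)=1$ makes the pairing an honest map into $\mathbb{R}$ (not merely defined up to scaling), the invariant subset $\tau(K_0)\subseteq\mathbb{R}$ is preserved. By the previous paragraph this subset equals $\Gamma$ on one side and $\Gamma'$ on the other, so $\Gamma=\Gamma'$ as subsets of $\mathbb{R}$, as claimed.
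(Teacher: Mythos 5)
Your overall architecture (defer to \cite{xinlambda} for the $K$-groups, get the trace from unique ergodicity, then read off rigidity from the pairing) is reasonable, and in fact the paper's own proof is even terser: it simply states that Corollary 3.3 and Proposition 3.6 of \cite{xinlambda} generalise directly, deferring both the $K$-theory and the trace-range computation to Li. Your unique ergodicity argument and the inclusion $\Gamma \subseteq \tau(K_0)$ are fine. However, there is a genuine gap in your computation of the trace range, and it sits exactly at the point the rigidity conclusion depends on.

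The problematic sentence is: ``Since $K_0$ is generated by the classes of the projections $1_{[a_+,b_-]}$ and their translates, the range $\tau(K_0)$ is the subgroup of $\mathbb{R}$ generated by $\{b-a\}$.'' This generation claim is false in general, and the paper's own results show it. All such projections lie in the image of the canonical map $H_0(\Gamma \ltimes_\alpha [0_+,1_-]) \rightarrow K_0$, and $H_0(\Gamma \ltimes_\alpha [0_+,1_-]) \cong \Gamma$ (the isomorphism being integration against the unique invariant measure, so this map has rank equal to the rational rank of $\Gamma$). But by Theorem \ref{hk conjecture}, $K_0(C^*_r(\Gamma\ltimes_\alpha[0_+,1_-])) \cong \bigoplus_{i\geq 1} H_{2i-1}(\Gamma)$, which strictly contains the $H_1(\Gamma)=\Gamma$ summand as soon as $\Gamma$ has rational rank at least $3$: for $\Gamma = \mathbb{Z}+\lambda_1\mathbb{Z}+\lambda_2\mathbb{Z}$ one gets $K_0 \cong \mathbb{Z}^4$ (your own consistency check confirms this), while the compact-open projection classes span only a rank-$3$ subgroup. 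So your argument establishes $\Gamma \subseteq \tau(K_0)$ but not $\tau(K_0) \subseteq \Gamma$, and it is the latter inclusion that makes $\tau(K_0)$ recover $\Gamma$ exactly and hence drives the implication $C([0_+,1_-])\ltimes_\alpha\Gamma \cong C([0_+,1_-])\ltimes_\alpha\Gamma' \implies \Gamma = \Gamma'$. Closing it requires evaluating the trace on the ``extra'' classes coming from higher homology (in the noncommutative-torus picture, the exterior-power generators), which is precisely the content of Li's Proposition 3.6; it does not follow from unique ergodicity alone and cannot be reduced to projections over $C([0_+,1_-])$.
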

\begin{proof}
This proof directly generalises via Corollary 3.3 and Proposition 3.6 of \cite{xinlambda}. 
\end{proof}
Since $Ell (C^*_r(\G_\Gamma))$ recovers $\Gamma$ as a subset of $\mathbb{R}$, each of the C*-algebras are pairwise nonisomorphic. In turn then, so are the groups $IE(\Gamma)$:
\begin{theorem}
Let $\Gamma$ be a dense countable subgroup of $\mathbb{R}$ with $1 \in \Gamma$. Then, the following are equivalent:
\begin{enumerate}
   
    \item $\Gamma =\Gamma'$ (as subsets of $\mathbb{R}$)
    \item $IE(\Gamma)\cong IE(\Gamma')$ (as groups)
    \item $\mathsf{D}(IE(\Gamma))\cong\mathsf{D}(IE(\Gamma')) $ (as groups)
    \item $[0_+,1_-] \ltimes_\alpha \Gamma \cong [0_+,1_-] \ltimes_\alpha \Gamma'$ (as groupoids)
    \item $C^*_r([0_+,1_-] \ltimes_\alpha \Gamma)\cong C^*_r([0_+,1_-] \ltimes_\alpha \Gamma')$ (as C$^*$ -algebras)
    \item $Ell(C^*_r([0_+,1_-] \ltimes_\alpha \Gamma)) \cong Ell([0_+,1_-] \ltimes_\alpha \Gamma')) $
\end{enumerate}
\label{classification of the groups}
\end{theorem}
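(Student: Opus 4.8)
The plan is to prove the six conditions equivalent by establishing a single cycle of implications that routes through the groupoid as the central object, and then to attach the two group-theoretic conditions $(2),(3)$ via the Matui--Rubin machinery. Concretely, I would prove
$$(1) \Rightarrow (4) \Rightarrow (5) \Rightarrow (6) \Rightarrow (1), \qquad (4) \Leftrightarrow (2) \Leftrightarrow (3),$$
since once $(1),(4),(5),(6)$ are shown mutually equivalent and $(2),(3)$ are tied to $(4)$, all six coincide. The heavy lifting is already done in the preceding lemmas; what remains is to assemble them and to check that the relevant regularity hypotheses are met.

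First, $(1) \Rightarrow (4)$ is immediate: if $\Gamma = \Gamma'$ as subsets of $\mathbb{R}$ then the partial actions $\alpha$ are literally the same, so the groupoids $[0_+,1_-]\ltimes_\alpha \Gamma$ and $[0_+,1_-]\ltimes_\alpha \Gamma'$ coincide. Next, $(4) \Rightarrow (5)$ is functoriality of the reduced groupoid C$^*$-algebra construction: an isomorphism of \'etale groupoids induces an isomorphism of their reduced C$^*$-algebras. The implication $(5) \Rightarrow (6)$ is formal, since the Elliott invariant is an isomorphism invariant of C$^*$-algebras. The step carrying genuine content is $(6) \Rightarrow (1)$: here I invoke Lemma \ref{elliot invariant}, whose output is precisely that the trace pairing recovers $\tau(K_0) = \Gamma$ as a subset of $\mathbb{R}$, so an isomorphism of Elliott invariants forces $\Gamma = \Gamma'$ on the nose. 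This closes the cycle and makes $(1),(4),(5),(6)$ equivalent.

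To bring in $(2)$ and $(3)$, I would apply the Matui--Rubin isomorphism theorem (Theorem \ref{matui isomorphism theorem}). Its hypotheses---essentially principal, \'etale, minimal Cantor groupoids---are verified in Corollary \ref{groupoid regularity}, where $[0_+,1_-]\ltimes_\alpha \Gamma$ is shown minimal, principal (hence essentially principal) and amenable, with Cantor unit space $[0_+,1_-]$. The theorem then gives $(4) \Leftrightarrow \mathsf{F}(\G_\Gamma)\cong \mathsf{F}(\G_{\Gamma'}) \Leftrightarrow \mathsf{D}(\G_\Gamma)\cong \mathsf{D}(\G_{\Gamma'})$. Finally, Lemma \ref{IE as a partial action} identifies $\mathsf{F}(\G_\Gamma) \cong IE(\Gamma)$, and since the derived subgroup is preserved under any group isomorphism, $\mathsf{D}(\G_\Gamma) \cong D(IE(\Gamma))$; translating gives exactly $(4) \Leftrightarrow (2) \Leftrightarrow (3)$.

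The main obstacle is entirely localised in the input Lemma \ref{elliot invariant}: the formal steps above only prove that the six data determine one another, but the arithmetic fact that the invariant sees $\Gamma$ as a concrete subset of $\mathbb{R}$ (not merely up to abstract group isomorphism) is what makes the classification sharp, and it rests on the trace computation $\tau(K_0) = \Gamma$. The secondary point requiring care is checking that the classification hypotheses of Theorem \ref{classification}---simple, unital, separable, nuclear, $\mathcal{Z}$-stable, and the UCT---hold, so that the C$^*$-algebraic conditions are genuinely equivalent to their invariants; this is supplied by Lemma \ref{its classifiable}, though for the cycle as arranged only the trivial direction $(5)\Rightarrow(6)$ and the recovery $(6)\Rightarrow(1)$ are strictly needed.
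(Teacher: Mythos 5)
Your proposal is correct and follows essentially the same route as the paper: the only difference is bookkeeping, since the paper strings all six conditions into a single cycle $1 \Rightarrow 2 \Rightarrow 3 \Rightarrow 4 \Rightarrow 5 \Rightarrow 6 \Rightarrow 1$ (invoking only the direction of Matui--Rubin from a $\mathsf{D}$-isomorphism to a groupoid isomorphism), whereas you run the shorter cycle $(1)\Rightarrow(4)\Rightarrow(5)\Rightarrow(6)\Rightarrow(1)$ and attach $(2),(3)$ via the full biconditional of Theorem \ref{matui isomorphism theorem}. Both arguments rest on exactly the same two nontrivial inputs, namely the Matui--Rubin isomorphism theorem and the trace computation of Lemma \ref{elliot invariant} recovering $\Gamma$ as a subset of $\mathbb{R}$, so the mathematical content is identical.
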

\begin{proof}
The implications $1. \Rightarrow 2. \Rightarrow 3.$, $4. \Rightarrow 5. \Rightarrow 6.$ are straightforward. $6 \Rightarrow 1.$ follows from Lemma \ref{elliot invariant}, since part of the Elliott invariant is the unique trace that recovers $\Gamma$. $3. \Rightarrow 4.$ is the Matui-Rubin isomorphism Theorem (Theorem \ref{matui isomorphism theorem}). 
\end{proof}
\begin{rmk}
This classification result can already be seen for the case of $\Gamma$ finitely generated as a Corollary of work by Matte-Bon [\cite{bon2018rigidity}, Theorem 10.3].
\end{rmk}
Note that this also identifies, via Theorem \ref{classification} many of the associated crossed products with concrete C*-algebras.
The K-Theory and tracial data of UHF algebras is computed in [\cite{davidson1996c}, Chapter III]. Note also that it was recently shown that UHF algebras have unique AF Cartan subalgebras [\cite{raad2023cdiagonals}, Theorem D], and hence the identification here is actually an identification on the groupoid level, due to Renault's reconstruction theorem [\cite{raad2022generalization}, Theorem 1.1]. 
\begin{corollary} \label{id with uhf gpd}
    Let $\Gamma \subset \mathbb{Q}$. Let $\{k(n)\}_{n \in \mathbb{N}}$ be a strictly increasing sequence of natural numbers such that $k(n) | k(n+1) $ and $\{1/(k(n))\}$ is a generating set of $\Gamma$. Then, $\Gamma \ltimes [0_+,1_-]$ is conjugate to the UHF groupoid associated with the Bratelli diagram
    $$ k(1) \xrightarrow{k(2)/k(1)} k(2) \xrightarrow{k(3)/k(2)} k(3) \xrightarrow{k(4)/k(3)} \hdots \xrightarrow{k(n)/k(n-1)} k(n) \xrightarrow{k(n+1)/k(n)} \hdots $$
    As in Example \ref{uhf groupoids}. This limit is independent of the choice of generating set, therefore such inductive limits of symmetric/alternating groups are classified by their supernatural numbers $\prod_{i=1}^\infty k_i$. \label{identification with UHF}
\end{corollary}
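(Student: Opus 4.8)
The plan is to realise $\Gamma \ltimes_\alpha [0_+,1_-]$ \emph{directly} as the increasing union of elementary subgroupoids matching the stated Bratteli diagram, and then to treat the independence/classification claim separately. Since $\Gamma\subset\mathbb{Q}$ is generated by $\{1/k(n)\}$ with $k(n)\mid k(n+1)$, we have $\Gamma=\bigcup_n \tfrac{1}{k(n)}\mathbb{Z}$ as an increasing union. For each $n$ set
$$\mathcal{S}_n:=\{(c,x)\in \Gamma\ltimes_\alpha[0_+,1_-]\; :\; c\in\tfrac1{k(n)}\mathbb{Z}\},$$
so that $\Gamma\ltimes_\alpha[0_+,1_-]=\bigcup_n \mathcal{S}_n$. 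First I would check that each $\mathcal{S}_n$ is a compact open principal subgroupoid: only the finitely many $c=\ell/k(n)$ with $|\ell|<k(n)$ occur (the translation must keep both endpoints inside $[0,1]$), so $\mathcal{S}_n$ is a finite union of basic compact open bisections.

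Next I would exhibit the elementary structure of each level. Translation by $1/k(n)$ partitions $[0_+,1_-]$ into the $k(n)$ clopen pieces $I_j^{(n)}:=[(j/k(n))_+,((j+1)/k(n))_-]$ for $0\le j<k(n)$, with $\alpha(\ell/k(n))$ carrying $I_j^{(n)}$ homeomorphically onto $I_{j+\ell}^{(n)}$; here the doubling of the boundary points $a_\pm$ is exactly what makes this a genuine clopen partition. Consequently $\mathcal{S}_n$ is the equivalence relation $x\sim y\iff x-y\in\tfrac1{k(n)}\mathbb{Z}$, which is groupoid-isomorphic to $\mathcal{R}_{k(n)}\times I_0^{(n)}$ (the full relation on the $k(n)$ pieces, times the unit groupoid on a single piece). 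The crux is the refinement step: from $1/k(n)=k_{n+1}/k(n+1)$ one gets $I_j^{(n)}=\bigsqcup_{m=0}^{k_{n+1}-1} I_{jk_{n+1}+m}^{(n+1)}$, and the arrow $j\to j+\ell$ of $\mathcal{S}_n$ (translation by $\ell/k(n)$) decomposes into the arrows $jk_{n+1}+m\to(j+\ell)k_{n+1}+m$ of $\mathcal{S}_{n+1}$ for $0\le m<k_{n+1}$. This is precisely the connecting map $\mathcal{R}_{k(n)}\hookrightarrow\mathcal{R}_{k(n+1)}$ of Example \ref{uhf groupoids}, so the inductive system $(\mathcal{S}_n)$ reproduces the stated Bratteli diagram and $\bigcup_n\mathcal{S}_n$ is the corresponding UHF groupoid.

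The step I expect to require the most care is the identification of the unit spaces: I must match $[0_+,1_-]$ with the Bratteli path space $\varprojlim_n(\mathbb{Z}/k(n)\mathbb{Z})$ in a way compatible with the groupoid structure. Concretely, a point $x$ determines the coherent sequence $(j_n(x))_n$ of indices with $x\in I_{j_n(x)}^{(n)}$, satisfying $j_n=\lfloor j_{n+1}/k_{n+1}\rfloor$, and $x\mapsto(j_n(x))_n$ is the required homeomorphism onto the path space, conjugating $\alpha(\ell/k(n))$ to the level-$n$ shift. The bookkeeping at the doubled points $(j/k(n))_\pm$ — where $(j/k(n))_-$ lies in $I_{j-1}^{(n)}$ while $(j/k(n))_+$ lies in $I_j^{(n)}$ — is the fiddly part, but it is routine given the order topology.

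Finally, for independence of the generating set and the classification by supernatural numbers: the supernatural number $\mathfrak{n}=\prod_i k_i=\sup_n k(n)$ is the l.c.m.\ of all denominators occurring in $\Gamma$, so it is intrinsic to $\Gamma$ as a subgroup of $\mathbb{Q}$; any two admissible sequences $\{k(n)\},\{k'(n)\}$ are cofinal with the same supremum and hence define the same $\mathfrak{n}$. That UHF groupoids with equal $\mathfrak{n}$ are conjugate can be seen either by interleaving the two Bratteli diagrams into a common refinement, or — as indicated before the statement — by computing via Lemma \ref{elliot invariant} that $Ell(C^*_r(\G_\Gamma))$ coincides with that of the UHF algebra $M_\mathfrak{n}$ (the $K_0$-group being $\Gamma\subset\mathbb{Q}$ with its induced order and class of the unit, with $K_1$ vanishing since $H_{\ge 2}(\Gamma)=0$ for $\Gamma\subset\mathbb{Q}$), applying Theorem \ref{classification} to obtain a C*-isomorphism, and upgrading it to a groupoid conjugacy through uniqueness of the AF Cartan subalgebra together with Renault's reconstruction theorem.
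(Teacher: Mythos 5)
Your proposal is correct, but it proves the corollary by a genuinely different route than the paper. The paper's own argument is operator-algebraic: it computes the Elliott invariant (Lemma \ref{elliot invariant}), uses classifiability (Lemma \ref{its classifiable} together with Theorem \ref{classification}) to identify $C^*_r(\G_\Gamma)$ with the UHF algebra of supernatural number $\prod_i k_i$, and then upgrades this C*-isomorphism to a groupoid conjugacy via uniqueness of AF Cartan subalgebras in UHF algebras [\cite{raad2023cdiagonals}, Theorem D] and Renault's reconstruction theorem [\cite{raad2022generalization}, Theorem 1.1]; the paper also records a second proof, after Lemma \ref{ generating set when rational }, which identifies $IE(\Gamma)$ with $\bigcup_n S_{k(n)}$ and then invokes the Matui--Rubin theorem (Theorem \ref{matui isomorphism theorem}) to pass back to groupoids. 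You instead construct the conjugacy by hand: the filtration $\mathcal{S}_n$ by slopes in $\frac{1}{k(n)}\mathbb{Z}$, the identification $\mathcal{S}_n \cong \mathcal{R}_{k(n)} \times I_0^{(n)}$ coming from the clopen partition $[0_+,1_-]=\bigsqcup_j I_j^{(n)}$ (where the doubled points $a_\pm$ indeed make this an honest partition), and the check that the inclusions $\mathcal{S}_n \subset \mathcal{S}_{n+1}$ reproduce exactly the connecting maps of Example \ref{uhf groupoids}. This is in effect the groupoid-level analogue of the paper's second proof, carried out one level deeper, and it buys you elementarity: no classification machinery, no Cartan rigidity, no reconstruction theorem, an explicit conjugacy, and independence of the generating set for free (every admissible diagram's groupoid is conjugate to the single groupoid $\Gamma \ltimes [0_+,1_-]$). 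What the paper's route buys is brevity given its earlier results, plus the simultaneous identification of the C*-algebra. Two minor points to tidy: your $\varprojlim_n(\mathbb{Z}/k(n)\mathbb{Z})$ must be taken with the floor-division bonding maps $j_{n+1}\mapsto\lfloor j_{n+1}/k_{n+1}\rfloor$ (as you state), not the ring quotient maps, since those give a different identification of the path space; and for the full classification claim you need not only that equal supernatural numbers give conjugate groupoids (your interleaving argument) but that distinct ones give non-conjugate groupoids, which requires $K$-theory (Glimm's theorem or Lemma \ref{elliot invariant}), as your last sentence correctly indicates.
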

In \cite{modindex}, Li constructs $F^\lambda$ as in \cite{xinlambda} and \cite{modindex} as particular examples of our groupoids in the case where $\Gamma=\mathbb{Z}[\lambda,\lambda^{-1}]$. 
 \begin{corollary}
     \label{id with putnams cstar}
     Let $\Gamma=\mathbb{Z}[\lambda,\lambda^{-1}]$, where $\lambda \in \mathbb{R}$. Then, $C([0_+,1_-]) \ltimes_{\alpha} \Gamma \cong F^\lambda$ as in \cite{xinlambda} and \cite{modindex}. 
 \end{corollary}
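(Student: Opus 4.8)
The plan is to deduce the C$^*$-algebraic identification from a groupoid-level identification, most of which has already been recorded earlier in the text. Recall from the Remark following Lemma \ref{IE as a partial action} that for every dense countable $\Gamma$ the canonical homeomorphism $f : \mathbb{R}_\Gamma \to O_{\Gamma^+ \subseteq \Gamma}$ conjugates the partial additive action $\alpha$ of $\Gamma$ on $[0_+,1_-]$ to Li's groupoid model $\Gamma \ltimes O_{\Gamma^+ \subseteq \Gamma}|_{N(\Gamma^+)}^{N(\Gamma^+)}$ underlying $\mathcal{F}^\lambda$ in \cite{xinlambda}, Section 2.3. First I would make this intertwining explicit at the level of bisections, checking that $f$ restricts to a homeomorphism $[0_+,1_-] \to N(\Gamma^+)$ and carries the basic compact open bisections $(c,[a_+,b_-])$ of $\Gamma \ltimes_\alpha [0_+,1_-]$ to the basic compact open bisections of Li's restricted transformation groupoid; this upgrades the homeomorphism of unit spaces to an isomorphism of \'etale groupoids.

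Next I would apply functoriality of the reduced groupoid C$^*$-algebra: an isomorphism of second countable \'etale groupoids induces an isomorphism of their reduced C$^*$-algebras. Here the two sides are identified with the crossed product $C([0_+,1_-]) \ltimes_\alpha \Gamma$ and with $C^*_r\big(\Gamma \ltimes O_{\Gamma^+ \subseteq \Gamma}|_{N(\Gamma^+)}^{N(\Gamma^+)}\big)$ respectively. Specializing to $\Gamma = \mathbb{Z}[\lambda,\lambda^{-1}]$, the latter groupoid is by construction precisely the one whose reduced C$^*$-algebra is denoted $F^\lambda$ in \cite{xinlambda} and \cite{modindex}, which completes the identification. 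Note that no new analytic input is needed beyond the conjugacy of the Remark and the functoriality of $C^*_r$.

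The main obstacle I anticipate is purely one of reconciling definitions across the two references. In \cite{modindex} the algebra $F^\lambda$ may be presented by a different but equivalent route — for instance directly as a Putnam-type algebra or via a semigroup C$^*$-algebra construction rather than as the reduced algebra of the restricted transformation groupoid of Section 2.3 of \cite{xinlambda}. I would therefore need to verify that the restriction set and partial action used to define $F^\lambda$ in \cite{modindex} agree, under $f$, with $N(\Gamma^+)$ and $\alpha$, and that both references assign the same meaning to $F^\lambda$ in the ring case. Once this bookkeeping across \cite{xinlambda} and \cite{modindex} is settled, the isomorphism follows immediately from the steps above.
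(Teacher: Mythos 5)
Your proposal is correct and is essentially the paper's own (implicit) argument: the corollary follows from the Remark after Lemma \ref{IE as a partial action}, which records that $\Gamma \ltimes_\alpha [0_+,1_-]$ is conjugate to Li's groupoid model of $\mathcal{F}^\lambda$ from \cite{xinlambda}, Section 2.3, combined with Li's result that the reduced C$^*$-algebra of that model is the $F^\lambda$ of \cite{modindex}. The bisection-level verification and the cross-reference bookkeeping you outline are exactly the details the paper leaves implicit; note also that your route is preferable to the Elliott-classification framing suggested by the surrounding text, since classifiability of $F^\lambda$ (Li's $\mathcal{Z}$-stability question) is only established via this very identification.
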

\section{Concrete Generating Sets In The Polycyclic Case}
\label{finiteness properties section}
When $\Gamma$ is finitely generated, so is $D(IE(\Gamma))$. The aim of this section is to find a concrete generating set for $D(IE(\Gamma))$ in the case when $\Gamma $ is polycyclic. Let us first show that indeed the derived subgroup is finitely generated. 
\begin{lemma} Let $\Gamma$ be a dense countable subgroup of $\mathbb{R}$ with $1 \in \Gamma$. The action 
$\hat{\alpha}:\Gamma/\mathbb{Z} \acts [0_+,1_-]$ as in Theorem \ref{IE as a global action} is an expansive action in the sense of Definition \ref{expansive action}. 
\label{expansive action IE}
\end{lemma}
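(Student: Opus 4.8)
The plan is to produce an explicit expansivity constant $\epsilon$ together with a finite family of group elements that separate distinct points, directly from the definition. Recall that $[0_+,1_-]$ is identified with a Cantor set, and its topology is generated by the clopen intervals $[a_+,b_-]$ with $a,b \in \Gamma$, $a<b$. The action $\hat{\alpha}$ is that of $\Gamma/\mathbb{Z}$ by rotations on this circle-like space. The key intuition is that two distinct points $x \neq y$ in $[0_+,1_-]$ fail to be separated only if they remain ``close'' under every rotation; but since the rotations by $\Gamma/\mathbb{Z}$ form a dense subgroup of the circle, I expect that a \emph{single} well-chosen rotation suffices to push any two distinct points a definite distance apart.

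First I would fix a metric on $[0_+,1_-]$ compatible with its topology. A natural choice is to transport a metric via the quotient map $q$ to the circle $\mathbb{R}/\mathbb{Z}$, declaring $d(x,y)$ to be the circular distance between $q(x)$ and $q(y)$ when these differ, and handling the split points $a_+, a_-$ (which have $q(a_+) = q(a_-)$) by a small auxiliary term recording the sign data. With this metric, I would set $\epsilon = 1/4$, say. Then I would take two distinct points $x,y$ and split into cases. In the generic case $q(x) \neq q(y)$, the two points sit at distinct positions on the circle; since $\Gamma/\mathbb{Z}$ acts by dense rotations, there exists $[c] \in \Gamma/\mathbb{Z}$ rotating the arc between $q(x)$ and $q(y)$ so that their images straddle the cut point $0 \equiv 1$, forcing $d(\hat{\alpha}([c])x, \hat{\alpha}([c])y) > \epsilon$. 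In the degenerate case $q(x)=q(y)$ with $x=a_+$, $y=a_-$ for some $a \in \Gamma$, I would rotate by a $[c]$ sending $a$ near the cut so that $\hat{\alpha}([c])(a_+)$ and $\hat{\alpha}([c])(a_-)$ land on opposite sides of $0_+ \equiv 1_-$, again separating them by a definite distance exploiting the $+/-$ splitting at the endpoint.

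The main obstacle, and the step requiring the most care, is the treatment of the split points and the behaviour at the cut $0 \equiv 1$: the metric must be chosen so that $a_+$ and $a_-$ are genuinely at positive distance yet still topologically close, and I must verify that some rotation in the \emph{dense but not full} group $\Gamma/\mathbb{Z}$ actually realises the separation rather than merely a limiting rotation. Density of $\Gamma$ in $\mathbb{R}$ (hence of $\Gamma/\mathbb{Z}$ in $\mathbb{R}/\mathbb{Z}$) is exactly what guarantees I can land the relevant arc across the cut with an admissible $[c]$, so I would lean on the convergence rules in $\mathbb{R}_\Gamma$ established earlier to control the one-sided limits. Once a uniform $\epsilon$ works for all pairs across both cases, expansivity in the sense of Definition \ref{expansive action} follows immediately, and I would note that this, combined with minimality and essential freeness (Corollary \ref{groupoid regularity}), feeds into Theorem \ref{finite generatedness theorem} to yield finite generation of $\mathsf{A}(\Gamma \ltimes_\alpha [0_+,1_-]) = D(IE(\Gamma))$ whenever $\Gamma$ is finitely generated.
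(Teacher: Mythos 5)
Your guiding intuition --- rotate any two distinct points until they straddle a distinguished cut --- is the right one, and it is the same idea the paper uses. But the metric you build breaks the argument at its key step, and this is a genuine gap, not a technicality. You declare $d(x,y)$ to be the \emph{circular} distance between $q(x)$ and $q(y)$, corrected by a ``small auxiliary term'' at split points. First, no such metric is compatible with the Cantor topology on $[0_+,1_-]$: for any $a \in \Gamma \cap (0,1)$ the sets $[0_+,a_-]$ and $[a_+,1_-]$ are disjoint compact clopen sets, hence must be at positive distance in any compatible metric, yet points $t_n \uparrow a$ and $t_n' \downarrow a$ with $t_n, t_n' \notin \Gamma$ lie on opposite sides of this clopen partition while your $d(t_n,t_n') \to 0$. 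Expansivity is a statement about a metric inducing the actual topology of $X$; proving it for an incompatible metric proves nothing. Second, and fatally for your Case 1, every element of $\Gamma/\mathbb{Z}$ acts as an \emph{isometry} of the circular pseudometric, so $d(\hat{\alpha}([c])x,\hat{\alpha}([c])y)$ can never exceed $d(x,y)$ by more than the small sign term: if $q(x)$ and $q(y)$ are at circular distance much less than $1/4$, no group element ever forces the distance above your $\epsilon = 1/4$. (Note also that $q(0_+)=q(1_-)$ on the circle, so ``straddling the cut $0 \equiv 1$'' makes the circular distance \emph{small}, not large.) Third, the sign term cannot be made uniformly large to compensate: as $a \to t$ with $t \notin \Gamma$, both $a_+ \to t$ and $a_- \to t$ in the topology of $[0_+,1_-]$, so every compatible metric satisfies $\inf_a d(a_+,a_-) = 0$. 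Your remark that $a_+,a_-$ must be ``genuinely at positive distance yet still topologically close'' reflects exactly this misunderstanding: each fixed pair is separated by a clopen partition, but the separation cannot be uniform over all $a$.

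The missing idea --- and the way the paper proceeds --- is to extract the expansivity constant from the topology rather than from a hand-built circle metric. Fix $\lambda \in \Gamma \cap (0,1)$, take \emph{any} metric compatible with the Cantor topology, and set $\epsilon = d\bigl([0_+,\lambda_-],[\lambda_+,1_-]\bigr) > 0$, positive because these are two fixed disjoint compact sets. Then density of $\Gamma$ lets you translate any two distinct points into opposite halves: if $q(x) < q(x')$, choose $c \in \Gamma$ with $0 < c < q(x')-q(x)$ and $c' \in \Gamma$ with $\lambda - c < q(x) - c' < \lambda$, so that the corresponding translation puts $x$ in $[0_+,\lambda_-]$ and $x'$ in $[\lambda_+,1_-]$; if instead $x = a_-$ and $x' = a_+$, translating $a$ onto $\lambda$ sends them to $\lambda_-$ and $\lambda_+$ respectively. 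This is your ``straddle a cut'' picture made rigorous: the uniform separation is witnessed by two fixed disjoint clopen sets, which any compatible metric must keep apart, whereas the circular metric you chose is structurally blind to it.
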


\begin{proof} Let $\lambda \in \Gamma \cap (0,1) $. Let $x,x' \in [0_+,1_-]$ be distinct. Let $\epsilon =d([0_+,(\lambda)_-],[(\lambda)_- , 1_+ ])$ We separate into two cases:
\begin{itemize}
    \item If $q(x) \neq q(x')$, suppose wlog $q(x)<q(x')$. By density, suppose the difference of $q(x')-q(x)>c>0$ for some $c \in \Gamma$. Also there exists some $c' \in \Gamma$ such that $\lambda -c <q(x)-c'< \lambda$. Hence, we have that $q(c' x) =q(x)-c'<\lambda $ and $q(c'x')=q(x')-c'>q(x)-c'+c>\lambda-c+c=\lambda$. 
    \item If $q(x)=q(x) \in \Gamma$, then $c'=q(x)-\lambda$ will separate the two characters into $[0_+,\lambda_-], [\lambda_+, 1_-]$.
\end{itemize}

\end{proof}
Let us remark that this was observed already in \cite{extensiveamenability}.
Also, it is clear that $\Gamma/\mathbb{Z}$ if finitely generated iff $\Gamma$ is finitely generated. Hence by Theorem \ref{a finitely generated iff} we obtain:
\begin{corollary}
  Let $\Gamma$ be a dense countable subgroup of $\mathbb{R}$ with $1 \in \Gamma$. The derived subgroup of $IE(\Gamma)$ is finitely generated iff $\Gamma$ is finitely generated as a group. \label{finitely generated iff}  
\end{corollary}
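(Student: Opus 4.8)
The plan is to recognise this statement as essentially a direct application of Corollary \ref{a finitely generated iff} to the global action $\hat{\alpha}$, once the relevant objects are identified. First I would reduce the question about $\Gamma$ to one about $\Gamma/\mathbb{Z}$. Since $\Gamma$ fits into a short exact sequence $\mathbb{Z} \hookrightarrow \Gamma \twoheadrightarrow \Gamma/\mathbb{Z}$ with $\mathbb{Z}$ finitely generated, finite generation passes both up and down: a quotient of a finitely generated group is finitely generated, and an extension of a finitely generated group by a finitely generated group is finitely generated. Hence $\Gamma$ is finitely generated if and only if $\Gamma/\mathbb{Z}$ is. This is the observation recorded just before the statement.

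Next I would verify that the action $\hat{\alpha}: \Gamma/\mathbb{Z} \acts [0_+,1_-]$ meets all the hypotheses of Corollary \ref{a finitely generated iff}. The group $\Gamma/\mathbb{Z}$ is discrete and countable, and $[0_+,1_-]$ is a Cantor set by the basis lemma for $\mathbb{R}_\Gamma$. The action is minimal and free, hence essentially free, by the regularity lemma for $\alpha$ together with the groupoid identification $\Gamma/\mathbb{Z} \ltimes_{\hat{\alpha}} [0_+,1_-] \cong \Gamma \ltimes_\alpha [0_+,1_-]$ of Corollary \ref{groupoid regularity} and its consequences; minimality on an infinite space also forces all orbits to be infinite. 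Expansiveness is exactly the content of Lemma \ref{expansive action IE}. With every hypothesis in place, Corollary \ref{a finitely generated iff} yields that $\Gamma/\mathbb{Z}$ is finitely generated if and only if $\mathsf{A}(\Gamma/\mathbb{Z} \ltimes_{\hat{\alpha}} [0_+,1_-])$ is finitely generated.

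Finally I would transport this back to $IE(\Gamma)$. Using the groupoid isomorphism $\Gamma/\mathbb{Z} \ltimes_{\hat{\alpha}} [0_+,1_-] \cong \Gamma \ltimes_\alpha [0_+,1_-]$, the alternating groups $\mathsf{A}$ of the two groupoids coincide, and by Corollary \ref{derived ie is simple} this alternating group is precisely $\mathsf{D}(IE(\Gamma)) = D(IE(\Gamma))$. Chaining the equivalences gives $\Gamma$ finitely generated $\iff$ $\Gamma/\mathbb{Z}$ finitely generated $\iff$ $\mathsf{A}(\Gamma \ltimes_\alpha [0_+,1_-])$ finitely generated $\iff$ $D(IE(\Gamma))$ finitely generated.

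I do not expect a serious computational obstacle here, since the genuine work has already been done: the expansivity estimate in Lemma \ref{expansive action IE} and the abstract equivalence in Corollary \ref{a finitely generated iff}. The only point requiring care is the bookkeeping of identifications — in particular that $\mathsf{A}(\G)$ really equals the derived subgroup $D(IE(\Gamma))$, which relies on almost finiteness and minimality through Theorem \ref{d simple}, and that the conjugacy of the $\alpha$- and $\hat{\alpha}$-groupoids lets one apply a result phrased for \emph{global} actions to the partial-action model of $IE(\Gamma)$.
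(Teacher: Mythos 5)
Your proposal is correct and follows exactly the paper's route: reduce finite generation of $\Gamma$ to that of $\Gamma/\mathbb{Z}$, invoke the expansivity of $\hat{\alpha}$ from Lemma \ref{expansive action IE}, apply Corollary \ref{a finitely generated iff}, and identify $\mathsf{A}(\Gamma/\mathbb{Z}\ltimes_{\hat{\alpha}}[0_+,1_-])$ with $D(IE(\Gamma))$ via Corollary \ref{derived ie is simple}. The paper states this almost without proof, so your verification of the hypotheses (minimality, essential freeness, infinite orbits, Cantor unit space) simply makes explicit the bookkeeping the paper leaves implicit.
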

The fact that $\Gamma$ finitely generated implies that $D(IE(\Gamma))$ is finitely generated was observed in \cite{extensiveamenability}.
But recall that due to Remark \ref{local embedding}, any of these finitely generated groups in particular embed into polycyclic groups. So we look to focus on the case where $\Gamma \cong \mathbb{Z}^n$ and 
 obtain a finite generating set in this case. Abstractly speaking, Lemma \ref{expansive action IE}  establishes that the dynamical system $\alpha: \Gamma/\mathbb{Z} \acts [0_+,1_-]$ is a subshift in the case when $\Gamma$ is finitely generated. This is a classical result in dynamics, reproven by Nekrashevych in (\cite{nekrashevych2019simple}, Proposition 5.5). Let us describe now how to obtain this picture concretely. \ \\
We are inspired by so-called Sturmian subshifts, a classical object in dynamics. See \cite{blanchard2000topics}, Chapter 1. for a discussion of Sturmian subshifts and the paper \cite{brix2021sturmian} in which Brix studied the associated C$^*$ -algebras. 
Let $\Gamma$ be a dense additive subgroup of $\mathbb{R}$ containing 1. Let $\lambda \in \Gamma \cap (0,1)$ be arbitrary. 

For $t \in [0,1) $, set:
$$x_t: \Gamma/\mathbb{Z} \rightarrow \{0,1\} \quad b \mapsto \begin{cases}
    1 & 0 \leq t+b  < \lambda<1 \ \\
    0 & \text{else} 
\end{cases} $$
Where $t+b$ is taken modulo $\mathbb{Z}$. Let $$X_{\Gamma,\lambda}=\overline{\{x_t \; : \;  t \in [0,1) \}} \subset \{0,1 \}^{\Gamma/\mathbb{Z}}$$
Let $\mathcal{E}_{\Gamma,a}=(X_{\Gamma,a},\sigma)$.
The shift for $c \in \Gamma/\mathbb{Z}$ is given by:
$$\sigma_c: X_\Gamma \rightarrow X_\Gamma \quad   \sigma_c(x)(b)=x(b+c)  $$
Note that $\sigma_c(x_t)=x_{t+c}$ (where addition is taken mod $ \mathbb{Z}$) since
$$\sigma_c(x_t)(b)=x_t(b+c)= \begin{cases}
    1 & 0 \leq t + b + c < \lambda< 1 \ \\ 
    0 &\text{else}
\end{cases} $$
Let us examine further $X_\Gamma$. 
\begin{lemma}
Let $\Gamma$ be a dense countable subgroup of $\mathbb{R}$ with $1 \in \Gamma$. Let $\lambda \in \Gamma \cap (0,1)$ be arbitrary. Then
$X_\Gamma=\{x_t \; : \; t \in [0,1) \}\sqcup \{\hat{x}_t:=\lim_{n \rightarrow \infty} x_{t-1/n} \; : \; t \in (0,1] \cap \Gamma \}  $. The topology is generated by basic open sets of the form:
$$ \overline{\{x_t \; : \; t \in [a,b) \}}=\{x_t \; : \; t \in [a,b) \}\sqcup \{ \hat{x_t} \; : \; t \in (a,b] \} $$
\end{lemma}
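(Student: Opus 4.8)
The plan is to read each coding $x_t$ as the family of indicator values $x_t(b)=\mathbf 1\big[(t+b)\bmod 1\in[0,\lambda)\big]$, so that for fixed $b$ the map $t\mapsto x_t(b)$ is the indicator of the half-open arc $[-b,\lambda-b)$ in $\mathbb{R}/\mathbb{Z}$, which is right-continuous with jumps only at $t\equiv -b$ and $t\equiv\lambda-b\pmod 1$. First I would record the two basic limit computations in the product topology, where convergence means coordinatewise eventual constancy. Right-continuity of every coordinate gives $x_{t_n}\to x_t$ whenever $t_n\downarrow t$, while taking left limits coordinatewise yields $\lim_n x_{t-1/n}=\hat x_t$ with $\hat x_t(b)=\mathbf 1\big[(t+b)\bmod 1\in(0,\lambda]\big]$. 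Comparing the two indicators shows $\hat x_t(b)\ne x_t(b)$ exactly when $(t+b)\equiv 0$ or $\lambda\pmod 1$; since $\lambda\in\Gamma$, this occurs for some $b\in\Gamma/\mathbb{Z}$ iff $t\bmod 1\in\Gamma/\mathbb{Z}$, i.e. iff $t\in\Gamma$. Hence the genuinely new points produced as left limits are parametrised by $t\in(0,1]\cap\Gamma$ (the values $t=0$ and $t=1$ yield the same point, so the representative $1$ is kept), which establishes the inclusion $\supseteq$ in the set description.

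For the reverse inclusion I would show that nothing further arises in the closure. Take any convergent sequence $x_{t_n}$ with $t_n\in[0,1)$; by compactness of $[0,1]$ and passing to a subsequence that is either non-increasing or strictly increasing to some $t_\ast\in[0,1]$, the limit is $x_{t_\ast}$ in the first case (by right-continuity, with $t_\ast\in[0,1)$) and $\hat x_{t_\ast}$ in the second (by the left-limit computation, with $t_\ast\in(0,1]$). When $t_\ast\notin\Gamma$ the two coincide and give a point $x_{t_\ast}$ with $t_\ast\in(0,1)$ already in the first family, and when $t_\ast\in\Gamma$ we land in the second family. Thus every limit point lies in $\{x_t:t\in[0,1)\}\cup\{\hat x_t:t\in(0,1]\cap\Gamma\}$, so the closure is contained in this union and the set description follows. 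Disjointness of the union and injectivity of both parametrisations I would deduce from density of $\Gamma/\mathbb{Z}$ in $\mathbb{R}/\mathbb{Z}$: for two distinct points of the doubled circle one can choose $b\in\Gamma/\mathbb{Z}$ placing an arc boundary $\{0\}$ or $\{\lambda\}$ strictly between them, forcing the codings to differ in coordinate $b$.

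For the topology statement I would localise the same limit analysis to an interval $[a,b)$ with $a,b\in\Gamma$: sequences drawn from $[a,b)$ accumulate only at right limits $x_t$ with $t\in[a,b)$ and at left limits $\hat x_t$ with $t\in(a,b]$. The left limit at $a$ cannot occur, since the supremum of a strictly increasing sequence in $[a,b)$ exceeds $a$, whereas $\hat x_b$ does occur because $b\in\Gamma$; this yields the displayed formula for $\overline{\{x_t:t\in[a,b)\}}$. That these clopen sets form a basis is cleanest to obtain by transport of structure: the map $t_+\mapsto x_t$, $t_-\mapsto\hat x_t$, and $t\mapsto x_t$ for $t\notin\Gamma$ is a bijection $[0_+,1_-]\to X_\Gamma$ each of whose coordinates is the indicator of a clopen subset of $\mathbb{R}_\Gamma$, hence continuous; being a continuous bijection between compact Hausdorff spaces it is a homeomorphism, and it carries the known clopen basis $\{[a_+,b_-]:a<b\text{ in }\Gamma\}$ of $[0_+,1_-]$ precisely onto the sets above.

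I expect the main obstacle to be the reverse inclusion together with the endpoint bookkeeping: making the monotone-subsequence argument airtight across the eventually-constant and strictly-monotone cases and the coincidence $\hat x_{t_\ast}=x_{t_\ast}$ when $t_\ast\notin\Gamma$, and correctly matching the half-open conventions so that the left limit at $a$ is excluded while $\hat x_b$ is included. Verifying injectivity and disjointness from density of $\Gamma/\mathbb{Z}$ is the other delicate point, though it becomes routine once the separating coordinate $b$ is produced.
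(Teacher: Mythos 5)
Your proposal is correct, and it differs from the paper's argument in two of its three steps, in ways worth recording. For the forward inclusion and distinctness, the paper works with the single point $\hat{x}_1$: it shows $\hat{x}_1\neq x_t$ by a shift--contradiction argument (comparing $\sigma_c(x_t)(0)$ with $\sigma_c(\hat{x}_1)(0)$), and then obtains every other $\hat{x}_t$ as $\sigma_t(\hat{x}_1)$ using continuity and equivariance of the shift, with distinctness following from shift-invariance of $\{x_{t'}\}$. You instead compute coordinatewise, identifying $x_t$ and $\hat{x}_t$ as right- and left-continuous codings of the arc $[0,\lambda)$, which has the added benefit of showing exactly when $\hat{x}_t=x_t$ (namely iff $t\notin\Gamma$, using $\lambda\in\Gamma$) --- a fact the paper leaves implicit --- and you get injectivity from a separating coordinate supplied by density of $\Gamma/\mathbb{Z}$. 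For the reverse inclusion both arguments are the monotone-limit analysis, but your version (extract a non-increasing or strictly increasing subsequence of $t_n$, then use that the full limit equals the subsequential limit) is actually tighter than the paper's, which asserts that convergence of $x_{t_n}$ forces $t_n$ itself to be eventually monotone; that assertion fails when the limit lies outside $\Gamma$ (where $t_n$ may oscillate harmlessly), though the paper's conclusion is unaffected. For the topology, the paper decomposes the cylinder sets $C(a,0)$, $C(b,1)$ directly into unions of the sets $\overline{\{x_t : t\in[a,b)\}}$ and conversely; you instead build the bijection $[0_+,1_-]\to X_\Gamma$, check each coordinate is the indicator of a clopen subset, and invoke the compact-Hausdorff rigidity of continuous bijections to transport the known clopen basis $\{[a_+,b_-]\}$. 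This is clean and legitimate (it relies on the compactness of $[0_+,1_-]$ established earlier in the paper via Brouwer's theorem), and it effectively front-loads the unit-space homeomorphism $\phi$ that the paper only constructs in the subsequent Lemma \ref{IE as a subshift}; the trade-off is that the paper's cylinder-set computation stays entirely inside the subshift and exhibits the basic open sets as explicit finite unions/intersections of cylinders, which is the form used later when analysing patches.
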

\begin{proof}
First let us show that $\{x_t \; : \; t \in [0,1) \}\sqcup \{ \hat{x_t} \; : \; t \in (0,1] \} \subset X_\Gamma $. Consider first $\lim_{n \rightarrow \infty } x_{1-1/n}$. One has that this is convergent (for all $t' \in \Gamma/\mathbb{Z}$ there exists some $N_{t'}\in \mathbb{N}$ such that $n>N_{t'} \implies x_{1-1/n}({t'})=x_{1-1/N_{t'}}(t')$, hence $(\lim_{n \rightarrow \infty } x_{1-1/n})(t')=x_{1-1/N_g}(t')$ ).  At the same time, different from $x_{t'}$ for any $t'$. 

Set $\hat{x}_1:=\lim_{n \rightarrow \infty } x_{1-1/n}$. Suppose for contradiction that $\hat{x}_1=x_t$ for some $t$. Note first that for $n>1/(1-\lambda)$, $x_{1-1/n}(0)=0$. Therefore, $\hat{x}_1(0)=0$, hence $t \in [\lambda,1)$.  Let $c \in (0,1-t) \cap (0,\lambda) $, then $\sigma_c(x_t)(0)=x_{t+c}(0)=0$. However, for any $c \in (0,\lambda)$ we have that $\sigma_c(\hat{x}_1)(0)=\sigma_c(\lim_{n \rightarrow \infty} x_{t-1/n})(0)=\lim_{n \rightarrow \infty }x_{c-1/n} (0)=1$. This is a contradiction hence $\hat{x}_1$ is distinct from all the $x_t, t \in [0,1)$. \ \\
Let $t \in (0,1) \cap \Gamma$. Consider $\hat{x}_t=\lim_{n \rightarrow \infty} x_{t-1/n}$. Since $\sigma_t$ is continuous, it is enough to remark that $\sigma_t(\hat{x}_1)=\sigma_t(\lim_{n \rightarrow \infty} x_{1-1/n})=\lim_{n \rightarrow \infty} \sigma_t(x_{1-1/n})=\lim_{n \rightarrow \infty} x_{t-1/n}=\hat{x}_t$. Therefore $\hat{x}_t$ exists and is distinct from any $x_{t'}$ since we saw that $\{x_{t'} \; : \; t' \in [0,1) \}$ is invariant under $\sigma_t$. 
\ \\ \ \\
Now let us show that $X_\Gamma \subset \{x_t \; : \; t \in [0,1) \}\sqcup \{ \hat{x_t} \; : \; t \in (0,1] \}$. Let $(t_n)_{n \in \mathbb{N}}$ be sequence in $[0,1)$ such that $\lim_{n \rightarrow \infty} x_{t_n}$ exists. Then, for all $t \in \Gamma/\mathbb{Z}$, there exists some $N \in \mathbb{N}$ such that $x_{t_{n+N}}(t) $ is constant $(n \in \mathbb{N})$. This implies that for large $n$, $t_n$ is a convergent sequence in $[0,1)$ that is eventually monotone. If $t_n$ is eventually monotone increasing to $t \in \Gamma$, $\lim_{n} x_{t_n}= \hat{x}_t$. If $t_n$ is eventually monotone decreasing to $t \in \Gamma$, $\lim_{n} x_{t_n}= x_t$, finally if the limit of $t_n$ is some $t \nin \Gamma$, regardless of the direction, $\lim_{n} x_{t_n}=x_t$.  
\ \\ \ \\
Now let us describe the topology on $X_\Gamma$. The topology is generated by cylinder sets. These sets come in two forms $C(a,0):=\{ x \in X_\Gamma \; : \; x(a)=0 \}, C(b,1)=\{x \in X_\Gamma \; x(b)=1 \} $ where $a,b \in \Gamma \cap [0,1)$ are arbitrary. It is clear that $C(a,0)=\overline \{ x_t \; : \; t \in [1-a, 1+\lambda -a) \cup [0,\lambda-a) \}, C(b,1)= \overline \{ x_t \; : t \in [\lambda-b,1-b )\cup [1+\lambda-b,1) \} $. Therefore, for $a<b$, we have that $\overline{\{x_t : t \in [a,b) \}}$ is open, and that cylinder sets can be written $C(a,0):=\overline{\{x_t : t \in [1-a,1+\lambda-a) \}} \cup \overline{\{x_t : t \in [0,\lambda-a) \}}   $, $C(b,1):=\overline{\{x_t : t \in [\lambda-b,1-b) \}} \cup \overline{\{x_t : t \in [1+\lambda-b,1) \}}$. Therefore the topology is generated by the compact open subsets of the form $\overline{\{x_t \; : t \in [a,b)\}}$
\end{proof}
\begin{lemma}
Let $\Gamma$ be a dense countable subgroup of $\mathbb{R}$ with $1 \in \Gamma$. Let $\lambda \in \Gamma \cap (0,1)$. Then, $\mathcal{E}_{\Gamma,\lambda} \cong \Gamma/\mathbb{Z} \ltimes [0_+,1_-]$ (they are conjugate as groupoids). In particular, the groupoid conjugacy class is independent of our choice of $\lambda \in \Gamma \cap (0,1)$. 
    \label{IE as a subshift} 
\end{lemma}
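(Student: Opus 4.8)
The plan is to build an explicit $\Gamma/\mathbb{Z}$-equivariant homeomorphism $\Phi\colon [0_+,1_-] \to X_{\Gamma,\lambda}$ and then invoke the standard fact that such a map induces a conjugacy of transformation groupoids via $(c,x)\mapsto(c,\Phi(x))$. The guiding picture is that both unit spaces are the same ``doubled circle'' $\mathbb{R}_\Gamma/\mathbb{Z}$: on the one side $[0_+,1_-]$ carries a right point $a_+$ and a left point $a_-$ at each $a\in\Gamma$ and a single point $t$ for each $t\notin\Gamma$; on the other side, the preceding lemma exhibits $X_{\Gamma,\lambda}=\{x_t : t\in[0,1)\}\sqcup\{\hat{x}_t : t\in(0,1]\cap\Gamma\}$, with one right-continuous code $x_t$ for each $t\in[0,1)$ and one extra left-limit code $\hat{x}_t$ at each $t\in(0,1]\cap\Gamma$. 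These two point structures match, and the coding map realises the identification.

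Concretely I would set, for $a\in\Gamma\cap(0,1)$,
$$\Phi(0_+)=x_0,\quad \Phi(t)=x_t\ (t\notin\Gamma),\quad \Phi(a_+)=x_a,\quad \Phi(a_-)=\hat{x}_a,\quad \Phi(1_-)=\hat{x}_1.$$
Using the structural description of $X_\Gamma$ from the preceding lemma, $\Phi$ is immediately a bijection: the right-continuous codes $x_t$ with $t\in[0,1)$ are exactly the images of $0_+$, of the non-$\Gamma$ points, and of the $a_+$, while the left-limit codes $\hat{x}_t$ with $t\in(0,1]\cap\Gamma$ are exactly the images of the $a_-$ and of $1_-$. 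Equivariance $\Phi\circ\hat{\alpha}([c])=\sigma_c\circ\Phi$ then reduces to the identities $\sigma_c(x_t)=x_{t+c}$ (already recorded) and $\sigma_c(\hat{x}_t)=\hat{x}_{t+c}$, the latter following from continuity of $\sigma_c$ together with $\hat{x}_t=\lim_n x_{t-1/n}$ and $\sigma_t(\hat{x}_1)=\hat{x}_t$; these are checked case by case on $t\notin\Gamma$, on $a_+$, and on $a_-$.

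To upgrade $\Phi$ to a homeomorphism I would match the two clopen bases directly. For $a<b$ in $\Gamma\cap[0,1]$ the basic clopen set $[a_+,b_-]$ consists of $a_+$, all points strictly between $a$ and $b$ (including $c_\pm$ for $c\in\Gamma\cap(a,b)$), and $b_-$; tracking these through $\Phi$ yields exactly $\{x_t : t\in[a,b)\}\sqcup\{\hat{x}_c : c\in(a,b]\cap\Gamma\}$, which is the basic clopen set $\overline{\{x_t : t\in[a,b)\}}$ of the preceding lemma. Hence $\Phi$ carries the basis of $[0_+,1_-]$ bijectively onto the basis of $X_{\Gamma,\lambda}$, and being a bijection of Cantor spaces it is a homeomorphism. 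Equivariance then gives the groupoid isomorphism $\Gamma/\mathbb{Z}\ltimes_{\hat{\alpha}}[0_+,1_-]\cong\mathcal{E}_{\Gamma,\lambda}$, and since the target groupoid $\Gamma/\mathbb{Z}\ltimes[0_+,1_-]$ makes no reference to $\lambda$, all the groupoids $\mathcal{E}_{\Gamma,\lambda}$ are mutually conjugate as $\lambda$ ranges over $\Gamma\cap(0,1)$.

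I expect no deep obstacle: the one place demanding care is the boundary/doubled-point bookkeeping, namely confirming that $\Phi$ misses and repeats no code and that the half-open intervals $[a,b)$ on the subshift side line up correctly with the clopen intervals $[a_+,b_-]$ on the $\mathbb{R}_\Gamma$ side. Both of these are essentially handed over by the explicit description of $X_\Gamma$ established in the preceding lemma, so the argument is a verification of a natural coding rather than a construction requiring a genuinely new idea.
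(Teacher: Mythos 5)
Your proof is correct and takes essentially the same route as the paper: your map $\Phi$ is exactly the inverse of the conjugacy the paper writes down, namely $\phi(c,x_t)=(c,t_+)$ for $t\in\Gamma$, $\phi(c,x_t)=(c,t)$ for $t\notin\Gamma$, and $\phi(c,\hat{x}_t)=(c,t_-)$, and both arguments hinge on the same basis correspondence $[a_+,b_-]\leftrightarrow\overline{\{x_t \,:\, t\in[a,b)\}}$ supplied by the structural description of $X_\Gamma$. The only difference is presentational: you make explicit the bijectivity, equivariance ($\sigma_c(\hat{x}_t)=\hat{x}_{t+c}$), and homeomorphism checks that the paper leaves implicit.
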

\begin{proof}
 
Our groupoid conjugacy is concrete. 
$$\phi: \mathcal{E}_{\Gamma,\lambda} \rightarrow \Gamma/\mathbb{Z} \ltimes U_{0,1} \quad (c,x)  \mapsto \begin{cases}
   (c, t_+) &  x=x_t, \; t \in \Gamma \ \\
   (c, t) & x=x_t, \; t \nin \Gamma \ \\
    (c,t_- ) & x=\hat{x}_t,  \; t \in \Gamma
\end{cases} $$

Note that in particular, $\phi (\overline{\{x_t \; t \in [c,d) \}})=[c_+,d_-]$ so that $\phi$ restricts to a homeomorphism of the unit spaces.
\end{proof}
Our motivation for this description of the groupoid is the ability to apply results from \cite{chornyi2020topological} to obtain a concrete generating set. First let us recall for a subshift the definition of a patch. 
\begin{definition}
    Let $G$ be an abelian group. Let $\mathcal{A}$ be a finite alphabet and $X \subset \mathcal{A}^G$ be a closed $G$-shift invariant subset. Consider the subshift $(G,X)$. 
    \begin{itemize}
        \item A \textit{patch} is a map $\pi:S \rightarrow \mathcal{A}$ where $\{s_1,...,s_n\}=S \subset G$ is a finite subset. 
        \item With each \textit{patch} we associate a cylinder set $W_\pi:= \{x \in X \; : x|_S=\pi \}$. 
        \item For each patch we say the transformation $T_{\pi}$ is well defined if $W_\pi$ is nonempty and $\{sW_{\pi} \}_{s \in S}$ are pairwise disjoint. 
        \item If $T_{\pi}$ is well defined, then $T_{\pi} \in Homeo(X)$ is the homeomorphism:
        $$ T_{\pi}(x)=\begin{cases} (s_{i+1}-s_{i})x & x \in s_i W_{\pi}, \; i\leq n-1 \ \\
        (s_1-s_{n})x & x \in s_{n} W_{\pi} \ \\
        x & \text{else}\end{cases}$$
        i.e. the homeomorphism of order 3 cyclically permuting the $sW_{\pi}$ in the canonical way. 
    \end{itemize}
\end{definition}
\begin{rmk}
Let $X \subset \{0,1\}^{\mathbb{Z}^d}$ a $\mathbb{Z}^d$ subshift and
 a patch $\pi^i:\{0,e_i,-e_i\} \rightarrow \{0,1\}$, we say $T_{\pi}$ is well defined if $W_\pi, e_i W_{\pi}, -e_i W_{\pi}$ are nonempty and pairwise disjoint. In this case $T_{\pi}$ is the homeomorphism of $X$ given by:
 $$ T_{\pi}(x)=\begin{cases}
     x+e_i & x \in W_{\pi}, -e_i W_{\pi} \ \\
     x-2e_i & x \in e_i W_{\pi} \ \\
     x & \text{ else }
 \end{cases}$$
\end{rmk}
\begin{theorem}[Nekrashevych-Juschenko-Chornyi]
Let $X \subset \{0,1\}^{G}$ be a $G$-subshift, where $G$ is a finitely generated abelian group with generators $e_1,...,e_d$. Then the derived subgroup of the groupoid $\G_{X}$, $D(\G_{X})$ is generated by $T_{\pi^i}$, where $i=1,...,d$ and $\pi^i$ ranges over all $\pi^i$ such that $T_{\pi^i}$ is well defined. 
    \label{nekthm} [\cite{chornyi2020topological}, Proposition 8]
\end{theorem}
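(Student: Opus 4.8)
The plan is to prove the two inclusions $\langle T_{\pi^i}\rangle \subseteq D(\G_X)$ and $D(\G_X)\subseteq \langle T_{\pi^i}\rangle$, the first being immediate and the second carrying all the difficulty. For the easy inclusion, observe that each well-defined $T_{\pi^i}$ cyclically permutes the three pairwise disjoint cylinder sets $W_{\pi^i}$, $e_iW_{\pi^i}$, $-e_iW_{\pi^i}$ and fixes everything else; letting $B_1$ be the compact open bisection implementing the shift from $W_{\pi^i}$ to $e_iW_{\pi^i}$ and $B_2$ the shift from $e_iW_{\pi^i}$ to $-e_iW_{\pi^i}$, one reads off directly that $T_{\pi^i}=\gamma_{B_1,B_2}$. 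Hence each $T_{\pi^i}\in\mathsf{A}(\G_X)$. Since the subshift groupoid is effective and, in the minimal setting relevant here, Theorem \ref{d simple} gives $D(\G_X)=\mathsf{A}(\G_X)$, we obtain $\langle T_{\pi^i}\rangle\subseteq D(\G_X)$.

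For the reverse inclusion I would argue in two reductions. Using $D(\G_X)=\mathsf{A}(\G_X)$, the group is generated by the elements $\gamma_{B_1,B_2}$. Every compact open bisection of a subshift groupoid is a finite disjoint union of basic pieces $(U,g,gU)$ with $U$ a cylinder and $g\in G$; splitting $\gamma_{B_1,B_2}$ over such a decomposition and refining the cylinders to a common one, it suffices to generate every elementary $3$-cycle that cyclically permutes a triple $W,\,gW,\,hW$ of pairwise disjoint translates of a single cylinder $W$ by group elements $g,h$ — that is, every well-defined patch transformation $T_\pi$ with $|S|=3$. The heart of the proof is then to write such a $T_\pi$ as a product of the nearest-neighbour generators $T_{\pi^j}$. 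The mechanism is transport by conjugation: each $T_{\pi^j}$ carries the cylinder $W_{\pi^j}$ to its $e_j$-translate, so, writing $g=\sum_j n_j e_j$ as a word in the generators, a product of such moves transports a small cylinder from $W$ to $gW$, and conjugating a single nearest-neighbour $3$-cycle by this transport (and its analogue for $h$) produces the target $3$-cycle on $\{W,gW,hW\}$.

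The main obstacle is well-definedness along the way: to apply $T_{\pi^j}$ at an intermediate step one needs the relevant cylinder and its $\pm e_j$-translates to be pairwise disjoint, which need not hold for the cylinders as given. This is resolved by exploiting the subshift structure: since $X\subseteq\{0,1\}^G$ with $G$ acting by the shift, for any finite family of distinct elements of $G$ one may pass to a sufficiently long patch — enlarging the support $S$, hence shrinking $W$ — so that all translates occurring along the chosen path become pairwise disjoint, while the value of the target transformation is unchanged, since it decomposes as a disjoint union over the refinement. After this separation every intermediate move is a genuine $T_{\pi^j}$, and the standard fact that $3$-cycles generate the alternating group (together with commutator identities to remain among even permutations) assembles the required element. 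Carrying out the patch bookkeeping and verifying disjointness at each elementary move is the only technical point; the remainder is a direct application of the machinery of Matui and Nekrashevych recalled above.
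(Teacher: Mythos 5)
First, a point of comparison: the paper itself contains no proof of Theorem \ref{nekthm}. It is quoted from [\cite{chornyi2020topological}, Proposition 8], with the extension from $\mathbb{Z}^d$ to general finitely generated abelian groups delegated to the author's thesis \cite{mythesis}; the mathematical core of the cited proof is Nekrashevych's expansivity machinery [\cite{nekrashevych2019simple}, Theorem 5.6]. So your proposal has to stand on its own, and judged that way it contains a genuine gap. Your easy inclusion is correct (indeed $T_{\pi^i}=\gamma_{B_1,B_2}=[\gamma_{B_1},\gamma_{B_2}]$ is already a commutator, so it lies in $\mathsf{D}(\G_X)$ without any appeal to minimality), and so is the standard reduction of $\mathsf{D}(\G_X)=\mathsf{A}(\G_X)$ to elementary $3$-cycles on triples $W$, $gW$, $hW$ of pairwise disjoint translates of a single cylinder. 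The gap is in the last step, and it is not mere ``patch bookkeeping''.

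The generating set in the statement is \emph{finite}: for each $i$ there are at most eight patches $\pi^i\colon\{0,e_i,-e_i\}\to\{0,1\}$, so the group $H=\langle T_{\pi^i}\rangle$ which you must show contains $\mathsf{A}(\G_X)$ is generated by at most $8d$ homeomorphisms, each supported on the union of three \emph{fixed} radius-one cylinders. Your mechanism for reaching a target $3$-cycle is conjugation of ``a single nearest-neighbour $3$-cycle'' by a transport element. If that phrase means one of the generators $T_{\pi^j}$, the mechanism cannot work: $G$ is amenable, so $X$ carries a shift-invariant probability measure $\mu$, and every element of $\mathsf{F}(\G_X)$ preserves $\mu$ because it is a piecewise shift; hence any conjugate $\tau T_{\pi^j}\tau^{-1}$ with $\tau\in H$ has support of measure exactly $3\mu(W_{\pi^j})$, whereas the refinement in your own second reduction forces you to produce $3$-cycles supported on triples of cylinders of strictly smaller (indeed arbitrarily small) measure in any infinite minimal subshift. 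Conjugation never shrinks supports. If instead the phrase means the $3$-cycle on $(V,e_jV,-e_jV)$ for the refined cylinder $V$ itself, then that element is not a generator and is not yet known to lie in $H$, so the argument is circular. The same objection applies to your transport elements: an element of $H$ restricting to translation by $e_j$ on a prescribed small cylinder must itself be manufactured out of the $T_{\pi^i}$, which is again the statement being proved. What is missing is precisely the content of the cited proposition: an induction on the length of patches showing, via explicit product and commutator identities among \emph{overlapping} $3$-cycles (not conjugation alone), that each $T_\pi$ attached to a longer patch lies in the group generated by those attached to shorter ones, the induction being anchored by expansivity of the cover of $\G_X$ by radius-one cylinder bisections. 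That shrinking step is the heart of [\cite{nekrashevych2019simple}, Theorem 5.6] and of [\cite{chornyi2020topological}, Proposition 8], and your proposal replaces it by an assumption. A secondary point: both directions of your argument quietly use effectiveness, minimality and almost finiteness (through Theorem \ref{d simple}) although the statement as printed carries no such hypotheses; these do hold where the paper applies the theorem (the free minimal systems of Lemma \ref{IE as a subshift}), but they should be flagged or built into the statement.
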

Note that as stated in \cite{chornyi2020topological}, the Theorem is only for the case $G=\mathbb{Z}^d$. However, extending this proof for arbitrary finitely generated abelian groups is relatively straightforward, as explained in the proof of [\cite{chornyi2020topological}, Proposition 18], and appears in the PhD thesis of the author in this more general form \cite{mythesis}. 
Applying the above Theorem, we first look to obtain a generating set of $D(IE(\Gamma))$ whenever $\Gamma/\mathbb{Z} \cong \mathbb{Z}^d$, $d>2$. Our first step is to notice we may assume our generators are in a prescribed subset $(a,1/2)$:
\begin{lemma}
Let $\Gamma$ be a dense countable subgroup of $\mathbb{R}$ with $1 \in \Gamma$.   Suppose $\Gamma/\mathbb{Z} \cong \mathbb{Z}^d$, with $d >1$. Then for any $a \in (0,1/2) \cap \mathbb{Q}$, $\Gamma/\mathbb{Z}$ is generated by a set $\{\lambda_i\}_{i=1}^d$, where $a<\lambda_1<\lambda_2<...<\lambda_d<1/2$. \label{prescribed generating set}
\end{lemma}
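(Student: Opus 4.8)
The plan is to start from an arbitrary $\mathbb{Z}$-basis of $\Gamma/\mathbb{Z} \cong \mathbb{Z}^d$ and to transform it, one generator at a time, by elementary (unimodular) moves into a basis whose canonical representatives all lie in $(a,1/2)$ and are strictly increasing. Two facts make this work. First, if $\mu_1,\dots,\mu_d$ is a $\mathbb{Z}$-basis and we replace $\mu_j$ by $\mu_j + v$ where $v$ lies in the subgroup $\langle \mu_i : i \neq j\rangle$ generated by the remaining elements, the result is again a $\mathbb{Z}$-basis, since the change-of-basis matrix is unipotent and hence lies in $GL_d(\mathbb{Z})$. Second, for $d \geq 2$ that remaining subgroup is free abelian of rank $d-1 \geq 1$, hence an infinite subgroup of the circle $\mathbb{R}/\mathbb{Z}$; and every infinite subgroup of $\mathbb{R}/\mathbb{Z}$ is dense, because its closure is a closed, infinite subgroup of $\mathbb{R}/\mathbb{Z}$ and therefore all of $\mathbb{R}/\mathbb{Z}$.

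Concretely, I would first fix $d$ pairwise disjoint nonempty open subintervals $I_1 < I_2 < \dots < I_d$ of $(a,1/2)$, each of length less than $1$; placing one generator in each $I_j$ will automatically yield the ordering $a < \lambda_1 < \dots < \lambda_d < 1/2$ with all $\lambda_i$ distinct. Let $\mu_1,\dots,\mu_d$ be any $\mathbb{Z}$-basis of $\Gamma/\mathbb{Z}$, regarded as elements of $\mathbb{R}/\mathbb{Z}$ via the embedding $\Gamma/\mathbb{Z} \hookrightarrow \mathbb{R}/\mathbb{Z}$.

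Then I would induct on $j = 1,\dots,d$. Assume $\mu_1,\dots,\mu_{j-1}$ have already been adjusted so that their representatives lie in $I_1,\dots,I_{j-1}$ and the whole collection is still a basis. Set $H_j := \langle \mu_i : i \neq j\rangle$; since the current collection is a basis, $H_j$ has rank $d-1 \geq 1$, so it is infinite and hence dense in $\mathbb{R}/\mathbb{Z}$. The translate $I_j - \mu_j$ is a nonempty open arc, so by density there is $v \in H_j$ with $v \in I_j - \mu_j$, i.e. $\mu_j + v$ has its canonical representative in $I_j$. Replacing $\mu_j$ by $\mu_j + v$ changes only the $j$-th element, leaves $\mu_1,\dots,\mu_{j-1}$ untouched, and by the first fact preserves the basis property. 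After the $d$-th step, the representatives $\lambda_i \in I_i$ of the resulting basis form the desired generating set of $\Gamma/\mathbb{Z}$.

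The only step needing genuine care is the density input, and this is exactly where the hypothesis $d > 1$ enters: the subgroup $H_j$ of ``other'' generators has rank $d-1$, which is positive precisely when $d \geq 2$. For $d = 1$ this subgroup is trivial, the single generator cannot be moved, and the statement can indeed fail. The remaining content is bookkeeping, namely choosing the $I_j$ in increasing order and recording that unimodular moves preserve generation.
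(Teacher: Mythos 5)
Your proposal is correct and takes essentially the same route as the paper: the paper also modifies an arbitrary basis one generator at a time, replacing a generator $\hat{\lambda}_j$ by $\hat{\lambda}_j+\gamma$ with $\gamma$ in the subgroup generated by $\mathbb{Z}$ and the remaining generators, and using density of that subgroup in $\mathbb{R}$ to place the new representative in a prescribed subinterval of $(a,1/2)$. The differences are organizational only: you fix disjoint target intervals up front and sweep once over the indices (which handles the ordering automatically), and you derive the density input from the fact that infinite subgroups of $\mathbb{R}/\mathbb{Z}$ are dense, whereas the paper inducts on $d$ and invokes density of $\mathbb{Z}\oplus\lambda\mathbb{Z}$ in $\mathbb{R}$ for irrational $\lambda$.
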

\begin{proof}
Let us prove this by induction on $d$. If $d=2$, suppose we have a algebraically independent generating set $\hat{\lambda_1},\hat{\lambda_2}$. Then, since $\mathbb{Z} \oplus \lambda_1 \mathbb{Z}$ is dense in $\mathbb{R}$, there exists $n,m \in \mathbb{Z}$ such that $\lambda_2=\hat{\lambda_2}+n+m\hat{\lambda}_1 \in (a,1/2)$. Then it is clear that $\hat{\lambda_1},\lambda_2$ form an algebraically independent generating set for $\Gamma/\mathbb{Z}$. Since $\mathbb{Z} \oplus \lambda_2 \mathbb{Z}$ is dense in $\mathbb{R}$, there exists $n,m \in \mathbb{Z}$ such that 
$\lambda_1=n \lambda_2 +m + \hat{\lambda_1} \in (a,\lambda_2) $. Then again, it is clear that $\lambda_1, \lambda_2$ form an algebraically independent generating set of $\Gamma/\mathbb{Z}$ of the required form. \ \\
Assume the statement is true for $d$ and let us suppose that $\Gamma/\mathbb{Z} \cong \mathbb{Z}^{d+1}$. Then let us assume by the inductive hypothesis that our generators of $\Gamma/\mathbb{Z}$ are of the form $1/3<\lambda_1<...<\lambda_d<1/2$ and $\hat{\lambda}_{d+1}$. Since $\mathbb{Z} \oplus \bigoplus_{i=1}^d\lambda_i \mathbb{Z}$ is dense in $\mathbb{R}$, we may find some $\gamma \in \mathbb{Z} \oplus \bigoplus_{i=1}^d\lambda_i \mathbb{Z}$ such that $\lambda_{d+1}=\gamma + \hat{\lambda}_{d+1} \in (\lambda_d,1/2)$. Then $\{\lambda_i\}_{i=1}^n$ is a generating set of the required form. 
\end{proof}

Let us fix notation for our patch maps, and look to remove patches $\pi$ such that $W_\pi$ is nonempty. 
For $(0,e_i,-e_i)$ let the patch $\pi^i_{a,b,c}$ with $a,b,c \in \{0,1\}$ be the patch such that $\pi^i_{a,b,c}(0)=a, \; \pi^i_{a,b,c}(e_i)=b, \pi^i_{a,b,c}(-e_i)=c$.

\begin{lemma}
  Let $\Gamma$ be a dense countable subgroup of $\mathbb{R}$ with $1 \in \Gamma$.  Suppose further that $\Gamma/\mathbb{Z} \cong \mathbb{Z}^d, d>1$ be generated by $\lambda_i$ with $1/3<\lambda_1<\lambda_2<...<\lambda_d<1/2$. Take $\lambda=\lambda_1$ in our construction of the subshift (Lemma \ref{IE as a subshift}). Then for all $i$ we have that,
    $$ W_{\pi^i_{1,1,0}}=W_{\pi^i_{1,0,1}}=W_{\pi^i_{1,1,1}}=W_{\pi^i_{1,1,0}}=W_{\pi^i_{0,0,0}}= \emptyset$$
  
\end{lemma}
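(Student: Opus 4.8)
The plan is to reduce each statement $W_{\pi^i_{a,b,c}}=\emptyset$ to an elementary question about three arcs of the circle $[0,1)$. Each patch set $W_{\pi^i_{a,b,c}}=\{x\in X_\Gamma : x(0)=a,\ x(e_i)=b,\ x(-e_i)=c\}$ is a finite intersection of cylinder sets, hence clopen, and by construction $\{x_t : t\in[0,1)\}$ is dense in $X_\Gamma=\overline{\{x_t\}}$. Therefore a nonempty $W_{\pi^i_{a,b,c}}$ must already contain some $x_t$, and it suffices to decide for which patterns $(a,b,c)$ there is a $t\in[0,1)$ with $(x_t(0),x_t(e_i),x_t(-e_i))=(a,b,c)$; the limit points $\hat{x}_t$ never have to be treated separately.

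First I would read off, for a fixed $i$ with $\mu:=\lambda_i$ (so $1/3<\lambda=\lambda_1\le\mu<1/2$), the three subsets of $[0,1)$ on which the relevant coordinates equal $1$. Straight from $x_t(b)=1\iff t+b\in[0,\lambda)\pmod 1$ and the modular reduction, one obtains
\[
\{x_t(0)=1\}=[0,\lambda),\qquad \{x_t(-e_i)=1\}=[\mu,\mu+\lambda),\qquad \{x_t(e_i)=1\}=[1-\mu,\,1-\mu+\lambda).
\]
The only subtlety here is the wrap-around: because $\mu\in(1/3,1/2)$, the quantities $t\pm\mu$ leave $[0,1)$ and must be reduced mod $1$, which is exactly what pushes the $e_i$-arc into the right half of the circle and starts the $-e_i$-arc at $\mu$.

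Next I would use the inequalities $1/3<\lambda\le\mu<1/2$. Since $\lambda<1/2$ the arc $[0,\lambda)$ lies in $[0,1/2)$, while $1-\mu>1/2$ places the $e_i$-arc inside $(1/2,1)$ and $\mu\ge\lambda$ places the $-e_i$-arc strictly to the right of $[0,\lambda)$. Hence $\{x_t(0)=1\}$ is disjoint from both $\{x_t(e_i)=1\}$ and $\{x_t(-e_i)=1\}$, so $x_t(0)=1$ forces $x_t(e_i)=x_t(-e_i)=0$; the only realizable pattern with first coordinate $1$ is $(1,0,0)$, which gives $W_{\pi^i_{1,1,0}}=W_{\pi^i_{1,0,1}}=W_{\pi^i_{1,1,1}}=\emptyset$. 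For the all-zero patch I would instead show the three arcs cover $[0,1)$: using $3\lambda>1$ they tile the circle precisely when $\mu=\lambda$, since then they become $[0,\lambda),[\lambda,2\lambda),[1-\lambda,1)$ with $2\lambda>1-\lambda$, leaving no $t$ with pattern $(0,0,0)$.

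I expect the main obstacle to be purely the combinatorial bookkeeping of the endpoints. The $e_i$- and $-e_i$-arcs genuinely overlap (from $1/3<\lambda\le\mu$ one gets $2\mu+\lambda>1$, so $[\mu,\mu+\lambda)\cap[1-\mu,1-\mu+\lambda)\ne\emptyset$), so to list all realizable patterns one must order the six endpoints $0,\lambda,\mu,1-\mu,\mu+\lambda,1-\mu+\lambda$ as functions of $\lambda,\mu$ and track them through the inequalities. Establishing the disjointness and covering claims cleanly from $1/3<\lambda\le\mu<1/2$ is the only delicate step; the potential topological difficulty coming from the limit points $\hat{x}_t$ is dissolved at the very start by the clopen-plus-density reduction.
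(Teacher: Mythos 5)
Your reduction (the cylinder sets are clopen, $\{x_t : t\in[0,1)\}$ is dense in $X_\Gamma$, so emptiness of $W_{\pi^i_{a,b,c}}$ can be tested on the points $x_t$ alone) is exactly the reduction the paper uses, and your three arcs
\[
\{t : x_t(0)=1\}=[0,\lambda),\qquad \{t : x_t(-e_i)=1\}=[\mu,\mu+\lambda),\qquad \{t : x_t(e_i)=1\}=[1-\mu,\,1-\mu+\lambda)
\]
are computed correctly, with the mod-$1$ wrap-around handled properly; your disjointness argument then settles $(1,1,0)$, $(1,0,1)$, $(1,1,1)$ completely. The gap is the all-zero patch, and you have in effect flagged it yourself: you observe that the three arcs tile the circle \emph{precisely when} $\mu=\lambda$. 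That observation is correct, but it means your covering argument only proves $W_{\pi^i_{0,0,0}}=\emptyset$ for $i=1$, whereas the lemma asserts it for all $i$. For $i>1$ one has $\mu=\lambda_i>\lambda=\lambda_1$, the union of your arcs is $[0,\lambda)\cup[\mu,\,1-\mu+\lambda)$, and the leftover set $[\lambda,\mu)\cup[1-\mu+\lambda,1)$ is nonempty. Every $t$ in it realizes the pattern $(0,0,0)$: for $t\in[\lambda,\mu)$ the three numbers $t$, $t+\mu\in[\lambda+\mu,2\mu)$ and $(t-\mu)+1\in[1+\lambda-\mu,1)$ all lie in $[\lambda,1)$, hence outside $[0,\lambda)$ (numerically: $\lambda\approx 0.4$, $\mu\approx 0.45$, $t=0.42$ gives $t+\mu\equiv 0.87$ and $t-\mu\equiv 0.97$ mod $1$). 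So $W_{\pi^i_{0,0,0}}\neq\emptyset$ for $i>1$.

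This gap therefore cannot be closed: the lemma as stated is false for $i>1$, and your (correct) bookkeeping is what exposes it. The paper's own proof makes exactly the wrap-around error you avoided: from $x_t(0)=x_t(\lambda_i)=x_t(-\lambda_i)=0$ it infers ``$t,\,t\pm\lambda_i\in[\lambda_1,1)$'' and then orders these as real numbers, $\lambda_1<t-\lambda_i<t<t+\lambda_i<1$, which silently assumes $t-\lambda_i\geq 0$ and $t+\lambda_i<1$; the counterexamples live precisely in the two wrap-around windows $[\lambda_1,\lambda_i)$ and $[1+\lambda_1-\lambda_i,1)$. The damage to the paper is local rather than fatal: for $i>1$ the nonempty set $W_{\pi^i_{0,0,0}}$ meets its own translate by $\lambda_i$ (the shift by $\lambda_i$ carries $\{x_t : t\in[1+\lambda_1-\lambda_i,1)\}$ onto $\{x_t : t\in[\lambda_1,\lambda_i)\}$, and both families lie in $W_{\pi^i_{0,0,0}}$), so $T_{\pi^i_{0,0,0}}$ is still not well defined; consequently the conclusion of Lemma \ref{only cylinders of interest} and the generating sets of Theorem \ref{generating set theorem} survive, though their proofs need this repair. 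If you want a correct statement to prove, either restrict the $(0,0,0)$ clause to $i=1$, or replace ``$W_{\pi^i_{0,0,0}}=\emptyset$'' by ``$T_{\pi^i_{0,0,0}}$ is not well defined,'' and prove the latter by the translate-overlap computation above.
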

\begin{proof} 
Let us begin with $i$ arbitrary. We show that $W_{\pi^i_{a,b,c}}=\emptyset$ by showing that there exists no $x_t$ such that $x_t \in W_{\pi^i_{a,b,c}}$. Suppose first $x_t \in W_{\pi^i_{1,1,0}}\bigcup W_{\pi^i_{1,1,1}}$. Then in particular $x_t(0)=x_t(\lambda_i)=1$. Hence $t \in [0,\lambda_1)$ and $t+\lambda_i \in [0,\lambda_1)$. This is a contradiction since then 
$\lambda_1 \leq \lambda_i \leq t+\lambda_i <\lambda_1$. Therefore $W_{\pi^i_{1,1,0}}\bigcup W_{\pi^i_{1,1,1}}=\emptyset$. Now suppose $x_t \in W_{\pi^i_{1,0,1}}$. Then in particular $x_t(0)=x_t(-\lambda_i)=1$. It follows that $t,t-\lambda_i \in [0,\lambda_1)$. But then $\lambda_1 \leq (t-\lambda_i)+\lambda_1<t+(-\lambda_i +\lambda_1) \leq t<\lambda_1$. This is a contradiction, hence $W_{\pi^i_{1,0,1}}$ is empty. Now let us suppose $x_t \in W_{\pi^i_{0,0,0}}$. Then we have that $x_t(0)=x_t(\lambda_i)=x_{t}(-\lambda_i)=0$. Then $t,t \pm \lambda_i \in [\lambda_1,1)$. Note in particular then, $\lambda_1<t-\lambda_i<t<t+\lambda_i<1$. This is impossible since $1-\lambda_1<2/3$ but $2\lambda_i>2/3$.
\end{proof}
\begin{lemma}
  Let $\Gamma$ be a dense countable subgroup of $\mathbb{R}$ with $1 \in \Gamma$.  Suppose further that $\Gamma/\mathbb{Z} \cong \mathbb{Z}^d, d>1$ be generated by $\lambda_i$ with $1/3<\lambda_1<\lambda_2<...<\lambda_d<1/2$. Take $\lambda=\lambda_1$ in our construction of the subshift (Lemma \ref{IE as a subshift}).  
Then for all $i$ we have:
\begin{enumerate}
    \item $ x_t \in W_{\pi^i_{1,0,0}} \iff t \in [0,\lambda_1)$
    \item $x_t \in W_{\pi^i_{0,1,0}} \iff t \in [\lambda_1+\lambda_i,1+\lambda_1-\lambda_i)$
    
    \item $x_t \in W_{\pi^i_{0,1,1}} \iff t \in [1-\lambda_i, \lambda_i+\lambda_1)$
    \item $x_t \in W_{\pi^i_{0,0,1}} \iff t \in [\lambda_i,1-\lambda_i)$
\end{enumerate}
\label{intervals lemma}
\end{lemma}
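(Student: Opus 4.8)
The plan is to reduce all four equivalences to a single computation: the values of $x_t$ at the three positions $0$, $e_i = \lambda_i$ and $-e_i = -\lambda_i$ of $\Gamma/\mathbb{Z} \cong \mathbb{Z}^d$ (recall that the generators $e_i$ are the $\lambda_i$, and that we have taken $\lambda = \lambda_1$ in the subshift). From the definition $x_t(b) = 1 \iff t + b \in [0,\lambda_1) \pmod 1$ I read off at once that $x_t(0) = 1 \iff t \in [0,\lambda_1)$, and the task is to pin down the two nontrivial coordinates as explicit subintervals of $[0,1)$.

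First I would compute $\{t : x_t(\lambda_i) = 1\}$. Reducing $t + \lambda_i \pmod 1$, the unwrapped branch $t + \lambda_i < 1$ would force $t + \lambda_i < \lambda_1$, which is impossible since $\lambda_i \ge \lambda_1$; hence only the wrapped branch $t \ge 1 - \lambda_i$ contributes and yields $x_t(\lambda_i) = 1 \iff t \in [1-\lambda_i,\, 1 + \lambda_1 - \lambda_i)$. Symmetrically, for $x_t(-\lambda_i)$ the wrapped branch $t < \lambda_i$ would force $t < \lambda_1 + \lambda_i - 1 < 0$, so only $t \ge \lambda_i$ contributes, giving $x_t(-\lambda_i) = 1 \iff t \in [\lambda_i,\, \lambda_i + \lambda_1)$. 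Both reductions use only the standing hypothesis $1/3 < \lambda_1 \le \lambda_i < 1/2$.

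With these two intervals in hand, each patch becomes a Boolean combination of intervals whose geometry is governed by three inequalities forced by the hypotheses: $1 - \lambda_i > 1/2$, $\lambda_i + \lambda_1 < 1$, and --- crucially using $\lambda_1 > 1/3$ --- the estimate $2\lambda_i > 2/3 > 1 - \lambda_1$, so that $1 - \lambda_i < \lambda_i + \lambda_1$. The last of these shows that the two value-$1$ intervals overlap in exactly $[1-\lambda_i,\, \lambda_i + \lambda_1)$, which is precisely the assertion (3) (one checks $x_t(0) = 0$ automatically there, as the interval lies in $[\lambda_1,1)$). Then (2) is the set-difference $[1-\lambda_i,\,1+\lambda_1-\lambda_i) \setminus [1-\lambda_i,\,\lambda_i+\lambda_1) = [\lambda_1+\lambda_i,\,1+\lambda_1-\lambda_i)$, and (4) is $[\lambda_i,\,\lambda_i+\lambda_1) \setminus [1-\lambda_i,\,\lambda_i+\lambda_1) = [\lambda_i,\,1-\lambda_i)$, in each case with $x_t(0)=0$ holding automatically on the resulting interval. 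Finally (1) follows since on $[0,\lambda_1)$ both value-$1$ intervals are avoided (as $1 - \lambda_i > 1/2 > \lambda_1$ and $\lambda_i \ge \lambda_1$), so both $x_t(\lambda_i)$ and $x_t(-\lambda_i)$ vanish while $x_t(0) = 1$.

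The argument is entirely elementary, and I expect the only real care to be bookkeeping: keeping straight which branch of the mod-$1$ reduction is active across the three coordinates, and verifying that the hypothesis $\lambda_1 > 1/3$ (rather than merely $\lambda_1 > 0$) is exactly what makes the overlap interval $[1-\lambda_i,\,\lambda_i+\lambda_1)$ nonempty and places the four nonempty patches as stated. There is no genuine obstacle beyond this organised casework.
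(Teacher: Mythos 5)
Your proof is correct and takes essentially the same route as the paper: both arguments reduce each patch condition to explicit subintervals of $[0,1)$ for the three coordinates $0,\lambda_i,-\lambda_i$ (with the wrapped/unwrapped mod-$1$ branches eliminated by $\lambda_1 \le \lambda_i < 1/2$) and then intersect, using exactly the inequalities $1-\lambda_i > 1/2 > \lambda_i$ and $2\lambda_i + \lambda_1 > 1$ forced by $\lambda_1 > 1/3$. The only difference is organizational --- you compute the two level sets $\{t : x_t(\pm\lambda_i)=1\}$ once and obtain all four cases as Boolean combinations, whereas the paper redoes the interval bookkeeping case by case --- which is a mild streamlining, not a different argument.
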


\begin{proof}
    \begin{enumerate}
        \item If $x_t \in W_{\pi^i_{1,0,0}}$, $x_t(0)=1 \implies t \in [0,\lambda_1)$. Moreover, if $t \in [0,\lambda_1)$ then $x_t(0)=0$, $t+\lambda_i \in [\lambda_i,\lambda_1+\lambda_i) \subset [\lambda_1,1)$ hence $x_t(\lambda_i)=0$. $t-\lambda_i \in [1-\lambda_i,1+\lambda_1-\lambda_i) \subset [\lambda_1,1)$ hence $x_t(-\lambda_i)=0$. 
        \item $x_t(0) \iff t \in [\lambda_1,1)$. $x_t(\lambda_i)=1 \iff t \in [1-\lambda_i,1+\lambda_1-\lambda_i)$, $x_t(-\lambda_i)=0 \iff t \in [\lambda_1+\lambda_i,1) $. Note that $1-\lambda_i<2/3<2\lambda_1<\lambda_1+\lambda_i$, hence the intersection of these sets is $[\lambda_1+\lambda_i,1+\lambda_1-\lambda_i)$. 
        \item $x_t(0) \iff t \in [\lambda_1,1)$. $x_t(\lambda_i)=1 \iff t \in [1-\lambda_i,1+\lambda_1-\lambda_i)$, $x_t(-\lambda_i)=1 \iff t \in [\lambda_i,\lambda_1+\lambda_i)$. $\lambda_i<1/2<1-\lambda_i$. Also note that $\lambda_i+\lambda_1<1+\lambda_1-\lambda_i$. Hence, the intersection of these sets is $[1-\lambda_i, \lambda_i+\lambda_1)$. 
    \item
        $x_t(0) \iff t \in [\lambda_1,1)$. $x_t(\lambda_i)=0 \iff t \in [\lambda_1, 1-\lambda_i)$, $x_t(-\lambda_i)=1 \iff t \in [\lambda_i,\lambda_1+\lambda_i)$. Noting that $ 1-\lambda_i<2/3<2\lambda_1\leq \lambda_1+\lambda_i$, it is clear the intersection of these sets is $[\lambda_i,1-\lambda_i)$.
        
    \end{enumerate}
\end{proof}

\begin{lemma}
  Let $\Gamma$ be a dense countable subgroup of $\mathbb{R}$ with $1 \in \Gamma$.  Suppose further that $\Gamma/\mathbb{Z} \cong \mathbb{Z}^d, d>1$ be generated by $\lambda_i$ with $2/5<\lambda_1<\lambda_2<...<\lambda_d<1/2$. Take $\lambda=\lambda_1$ in our construction of the subshift (Lemma \ref{IE as a subshift}). Then for all $i$ the only $a,b,c$ such that $W_{\pi^i_{a,b,c}}$ is nonempty and the sets $W_{\pi^i_{a,b,c}}, \lambda_i W_{\pi^i_{a,b,c}}, -\lambda_i W_{\pi^i_{a,b,c}}$ are pairwise disjoint are $W_{\pi^i_{0,1,0}}$ and $W_{\pi^i_{0,0,1}}$. 
\label{only cylinders of interest}
\end{lemma}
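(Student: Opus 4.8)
The plan is to reduce the statement to a finite comparison of half-open subintervals of $[0,1)$, one bookkeeping check for each patch. There are eight patches $\pi^i_{a,b,c}$ with $(a,b,c)\in\{0,1\}^3$, and by the preceding Lemma the four patches $(0,0,0),(1,0,1),(1,1,0),(1,1,1)$ already satisfy $W_{\pi^i_{a,b,c}}=\emptyset$, so they are discarded at once. It then suffices to treat the four nonempty patches $(1,0,0),(0,0,1),(0,1,0),(0,1,1)$, whose parameter sets $\{t : x_t\in W_{\pi^i_{a,b,c}}\}$ are recorded in Lemma \ref{intervals lemma}. Throughout I use $\sigma_c(x_t)=x_{t+c}$, so that translating a clopen set $\{x_t : t\in I\}$ by $c\in\Gamma/\mathbb{Z}$ yields $\{x_t : t\in I+c\}$ (mod $\mathbb{Z}$); in particular $\lambda_i W$ corresponds to $I+\lambda_i$ and $-\lambda_i W$ to $I-\lambda_i$. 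Since the $x_t$ are dense in $X_\Gamma$ and the sets $W_{\pi^i_{a,b,c}}$ and their translates are clopen, both emptiness and overlap can be tested on the parameter intervals alone (a clopen set is empty iff it contains no $x_t$, and any nonempty intersection of clopen sets contains some $x_t$).

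For the two patches claimed to survive I would compute the three parameter intervals and show they land in well-separated windows. For $(0,0,1)$, Lemma \ref{intervals lemma} gives $W\leftrightarrow[\lambda_i,1-\lambda_i)$, hence $\lambda_i W\leftrightarrow[2\lambda_i,1)$ and $-\lambda_i W\leftrightarrow[0,1-2\lambda_i)$; using $2/5<\lambda_1\le\lambda_i<1/2$ one has $1-2\lambda_i<1/5$, $\lambda_i>2/5$, $1-\lambda_i<3/5$ and $2\lambda_i>4/5$, so the three intervals sit inside $[0,1/5)$, $(2/5,3/5)$ and $(4/5,1)$ and are pairwise disjoint. For $(0,1,0)$, $W\leftrightarrow[\lambda_1+\lambda_i,1+\lambda_1-\lambda_i)\subseteq(4/5,1]$, and reducing modulo $\mathbb{Z}$ gives $\lambda_i W\leftrightarrow[\lambda_1+2\lambda_i-1,\lambda_1)$ and $-\lambda_i W\leftrightarrow[\lambda_1,1+\lambda_1-2\lambda_i)$; these two abut exactly at $\lambda_1$ (one ends where the other begins), and since $1+\lambda_1-2\lambda_i<7/10<4/5$ neither meets $W$. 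Thus both $(0,1,0)$ and $(0,0,1)$ give pairwise-disjoint translates.

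It remains to eliminate $(1,0,0)$ and $(0,1,1)$, and this is where the sharpened hypothesis $\lambda_1>2/5$ is essential. For $(1,0,0)$, with $W\leftrightarrow[0,\lambda_1)$, one gets $\lambda_i W\leftrightarrow[\lambda_i,\lambda_i+\lambda_1)$ and $-\lambda_i W\leftrightarrow[1-\lambda_i,1+\lambda_1-\lambda_i)$, which meet iff $1-\lambda_i<\lambda_i+\lambda_1$, i.e. $2\lambda_i+\lambda_1>1$ — immediate from $\lambda_1,\lambda_i>1/3$. For $(0,1,1)$, with $W\leftrightarrow[1-\lambda_i,\lambda_i+\lambda_1)$, reduction gives $\lambda_i W\leftrightarrow[0,2\lambda_i+\lambda_1-1)$ and $-\lambda_i W\leftrightarrow[1-2\lambda_i,\lambda_1)$, whose intersection is $[1-2\lambda_i,2\lambda_i+\lambda_1-1)$; this is nonempty exactly when $1-2\lambda_i<2\lambda_i+\lambda_1-1$, i.e. $4\lambda_i+\lambda_1>2$. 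Under $\lambda_1,\lambda_i>2/5$ one has $4\lambda_i+\lambda_1>8/5+2/5=2$, so the overlap is forced; and this inequality is genuinely delicate, as it is exactly sharp (for $i=1$ it reads $5\lambda_1>2$) and would fail under the $1/3$ bound used in the earlier lemmas. Hence in both remaining cases two of the three translates meet, so the patches are not well defined, and $(0,1,0)$ and $(0,0,1)$ are the only nonempty patches with pairwise-disjoint translates.

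The only real obstacle is carrying out the modular reductions of the shifted intervals consistently across all $i$ (keeping track of which translate wraps around $1$); once the threshold $4\lambda_i+\lambda_1>2$ is isolated as the precise condition producing the $(0,1,1)$ overlap, the remaining arithmetic is routine and explains why the hypothesis is upgraded from $\lambda_1>1/3$ to $\lambda_1>2/5$.
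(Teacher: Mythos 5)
Your proof is correct and follows essentially the same route as the paper: both arguments reduce each patch to its parameter interval via Lemma \ref{intervals lemma} and then settle disjointness by the same modular-interval arithmetic, with the same decisive (and sharp) inequality $4\lambda_i+\lambda_1>2$ eliminating the patch $(0,1,1)$. The only cosmetic differences are that you exclude $(1,0,0)$ by exhibiting an explicit overlap of $\lambda_i W$ and $-\lambda_i W$ where the paper uses a pigeonhole argument (three disjoint intervals of length $>1/3$ cannot fit in the circle), and your $(0,1,1)$ case states the overlap criterion with the inequality oriented correctly, whereas the paper's wording reverses it (its conclusion being nonetheless the same).
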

\begin{proof} Note the following equivalences, since intersections of cylinder sets are cylinder sets:
\begin{itemize}
    \item $W_{\pi^i_{a,b,c}}, \pm \lambda_i W_{\pi^i_{a,b,c}}$ are pairwise disjoint. 
    \item There exists no $t$ such that $x_t $ is in the intersection of one of the above sets. 
    \item By Lemma \ref{intervals lemma}, $x_t \in W_{\pi^i_{a,b,c}}$ iff $t \in [d,f)$ for some $d,f \in \Gamma$. It follows that $[d,f), [d+\lambda_i,f+\lambda_i), [d-\lambda_i,f-\lambda_i)$ are disjoint intervals in $[0,1] \mod \mathbb{Z}$.  
\end{itemize}
Note that if the interval in Lemma \ref{intervals lemma} is greater than $1/3$, these three sets cannot be disjoint, since this would imply there are 3 disjoint subintervals of $[0,1]$ all of have Lebesgue measure greater than $1/3$.  Therefore $W_{\pi^i_{1,0,0}}$ can be excluded for all $i$, since we assumed $\lambda_1>1/3$. \ \\ The other intervals are all less than 1/3 in length. In particular then, when we translate by $1/2>\lambda_i>1/3$, we have that $\lambda_i W_{\pi_{a,b,c}^i} \cap W_{\pi_{a,b,c}^i}=\emptyset, -\lambda_i W_{\pi_{a,b,c}^i} \cap W_{\pi_{a,b,c}^i}=\emptyset $. It is for this reason enough to check if $\lambda_i W_{\pi_{a,b,c}^i} \cap -\lambda_i W_{\pi_{a,b,c}^i}=\emptyset$. 
\begin{itemize}
    \item If $(a,b,c)=(0,1,0)$, the sets to compare are $[\lambda_1,1+\lambda_1-\lambda_i)$ and $[\lambda_1+2\lambda_i-1,\lambda_1)$. These sets are disjoint clearly. 
    \item If $(a,b,c)=(0,1,1)$, the sets to compare are $(1-2\lambda_i,\lambda_1]$ and $(0,2\lambda_i+\lambda_1-1]$. These have nonempty intersection $\iff 1-2\lambda_i>2\lambda_i+\lambda_1-1 \iff 2>4 \lambda_i+\lambda_1>5\lambda_1$. We assumed that $\lambda_1>2/5$, hence this never occurs and these sets always have nonempty intersection. 
    \item If $(a,b,c)=(0,0,1)$, the sets to compare are $(0,1-2\lambda_i]$ and $(2\lambda_i,1]$. These sets are disjoint since $1-2\lambda_i<1/3<2/3<2\lambda_i$.
\end{itemize}

\end{proof}
We are now ready to apply Theorem \ref{nekthm} to obtain a generating set.

\begin{theorem}
  Let $\Gamma$ be a dense countable subgroup of $\mathbb{R}$ with $1 \in \Gamma$.  Suppose further that $\Gamma/\mathbb{Z} \cong \mathbb{Z}^d, d>1$. Then there exists a generating set $\{\lambda_i\}_{i=1}^d$ of $\Gamma/\mathbb{Z}$ with $2/5<\lambda_1<\lambda_i<\lambda_{i+1}<\lambda_d<1/2$. Then, $D(IE(\Gamma))$ is generated by 
$$\sigma_i(t)=\begin{cases}
   t+\lambda_i & t \in (\lambda_1,1-2\lambda_i+\lambda_1] \ \\
   t+\lambda_i-1 & t \in (\lambda_1+\lambda_i,1+\lambda_1-\lambda_i] \ \\
    t+1-2\lambda_i & t \in  (\lambda_1+2\lambda_i-1,\lambda_1] \ \\
    t & \text{ else } 
\end{cases}, \quad \hat{\sigma}_i(t)=\begin{cases}
    t+\lambda_i & t \in (0,1-2\lambda_i]\sqcup(\lambda_i,1-\lambda_i] \ \\
    t-2\lambda_i & t \in  (2 \lambda_i,1] \ \\
    t & \text{ else }
\end{cases}$$
\label{ti defined}
Where $i=1,..,d$. 
\label{generating set theorem}
\end{theorem}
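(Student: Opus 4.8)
The plan is to combine the Sturmian subshift model of Lemma \ref{IE as a subshift} with the Nekrashevych--Juschenko--Chornyi generation result (Theorem \ref{nekthm}) and the case analysis already carried out in Lemmas \ref{intervals lemma} and \ref{only cylinders of interest}, and finally to identify the resulting abstract generators $T_{\pi^i}$ with the explicit interval maps $\sigma_i,\hat\sigma_i$ in the statement.

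First I would fix the generating set. Applying Lemma \ref{prescribed generating set} with $a=2/5 \in (0,1/2)\cap\mathbb{Q}$ produces generators $\{\lambda_i\}_{i=1}^d$ of $\Gamma/\mathbb{Z}$ with $2/5<\lambda_1<\cdots<\lambda_d<1/2$, as required. I then set $\lambda=\lambda_1$ and form the subshift $\mathcal{E}_{\Gamma,\lambda_1}$. By Lemma \ref{IE as a subshift}, together with the identifications $IE(\Gamma)=\mathsf{F}(\Gamma/\mathbb{Z}\ltimes[0_+,1_-])$ (Lemma \ref{IE as a global action}) and $D(IE(\Gamma))=\mathsf{A}(\Gamma\ltimes_\alpha[0_+,1_-])$ (Corollary \ref{derived ie is simple}), the groupoid conjugacy $\phi$ carries $\mathsf{D}(\mathcal{E}_{\Gamma,\lambda_1})$ isomorphically onto $D(IE(\Gamma))$.

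Next I would apply Theorem \ref{nekthm}, taking the generators of $\Gamma/\mathbb{Z}\cong\mathbb{Z}^d$ to be $e_i=\lambda_i$: this shows $\mathsf{D}(\mathcal{E}_{\Gamma,\lambda_1})$ is generated by those $T_{\pi^i}$ (over $i=1,\dots,d$ and all patches $\pi^i$ on $\{0,e_i,-e_i\}$) for which $T_{\pi^i}$ is well defined, i.e.\ for which $W_{\pi^i}$ is nonempty and $W_{\pi^i},\lambda_i W_{\pi^i},-\lambda_i W_{\pi^i}$ are pairwise disjoint. By Lemma \ref{only cylinders of interest}, for this choice of generators the only surviving patches are $\pi^i_{0,1,0}$ and $\pi^i_{0,0,1}$. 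Hence $D(IE(\Gamma))$ is generated by $\{T_{\pi^i_{0,1,0}}, T_{\pi^i_{0,0,1}} : 1\le i\le d\}$, a set of $2d$ elements.

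It remains to compute these transformations explicitly, and this is the step I expect to require the most care. Using the three regions identified in Lemma \ref{intervals lemma} (for instance $x_t\in W_{\pi^i_{0,1,0}}\iff t\in[\lambda_1+\lambda_i,1+\lambda_1-\lambda_i)$) together with the order-three cyclic rule defining $T_\pi$, I would push each region through the conjugacy $\phi$ and the embedding $q^*$ into a half-open subinterval of $[0,1]$, tracking the mod-$\mathbb{Z}$ wraparound so that each shift $x\mapsto x+\lambda_i$, $x\mapsto x-2\lambda_i$ becomes one of the affine maps $t\mapsto t+\lambda_i$, $t\mapsto t+\lambda_i-1$, $t\mapsto t+1-2\lambda_i$, $t\mapsto t-2\lambda_i$. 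Matching these against the displayed formulas then yields $\sigma_i=T_{\pi^i_{0,1,0}}$ and $\hat\sigma_i=T_{\pi^i_{0,0,1}}$. The only genuine subtlety is bookkeeping: the identification $x_t\leftrightarrow t_+$, $\hat x_t\leftrightarrow t_-$ forces the closed and open endpoints of the subshift intervals to switch sides when passing to the right-continuous interval picture, which is exactly what accounts for the half-open conventions $(\cdot,\cdot]$ appearing in the statement.
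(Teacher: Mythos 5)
Your proposal is correct and follows essentially the same route as the paper: fix the generators via Lemma \ref{prescribed generating set}, pass to the subshift model of Lemma \ref{IE as a subshift}, apply Theorem \ref{nekthm} together with Lemma \ref{only cylinders of interest} to isolate the patches $\pi^i_{0,1,0}$ and $\pi^i_{0,0,1}$, and then identify $\varphi\circ\phi(T_{\pi^i_{0,1,0}})=\sigma_i$ and $\varphi\circ\phi(T_{\pi^i_{0,0,1}})=\hat\sigma_i$ by direct computation. The final bookkeeping step, which the paper dismisses as a straightforward computation, is exactly the endpoint/wraparound matching you describe, and your pairing of patches with the maps $\sigma_i,\hat\sigma_i$ agrees with the intervals in Lemma \ref{intervals lemma}.
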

\begin{proof}
 First, note that the generating set $\lambda_i$ of $\Gamma/\mathbb{Z}$ occurs via Lemma \ref{prescribed generating set}. We have for all $i$, the only patches $\pi^i:\{0,\lambda_i,-\lambda_i\} \rightarrow \{0,1\}$ such that $T_\pi$ is defined are $\pi^i_{0,1,0}$ and  $\pi^i_{0,0,1}$ by Lemma \ref{only cylinders of interest}. Hence, the result follows by Theorem \ref{nekthm} that $D(IE(\Gamma))$ is generated by $T_{\pi^i_{0,1,0}}, T_{\pi^i_{0,0,1}} $. It is a straightforward computation to check that $T_{1,i}=\varphi \circ \phi ( T_{\pi^i_{0,0,1}} ), \; T_{1,i}=\varphi \circ \phi ( T_{\pi^i_{0,1,0}})$.

\end{proof}

\begin{figure}[h!]
    \centering

\tikzset{every picture/.style={line width=0.75pt}} 

\begin{tikzpicture}[x=0.65pt,y=0.65pt,yscale=-1,xscale=1]

\draw    (410.87,80.79) -- (410.73,300.41) ;
\draw    (410.73,300.41) -- (630.59,300.41) ;
\draw  [dash pattern={on 0.84pt off 2.51pt}]  (394.95,240.24) -- (523.91,241.23) -- (630.59,241.23) ;
\draw  [dash pattern={on 0.84pt off 2.51pt}]  (410.03,225.77) -- (629.9,225.77) ;
\draw  [dash pattern={on 0.84pt off 2.51pt}]  (396.92,166.27) -- (630.24,166.59) ;
\draw  [dash pattern={on 0.84pt off 2.51pt}]  (410.53,139.96) -- (630.39,139.96) ;
\draw  [dash pattern={on 0.84pt off 2.51pt}]  (410.87,80.79) -- (630.73,80.79) ;
\draw  [dash pattern={on 0.84pt off 2.51pt}]  (470.02,81.77) -- (469.88,318.16) ;
\draw  [dash pattern={on 0.84pt off 2.51pt}]  (544.96,80.79) -- (542.84,321.12) ;
\draw  [dash pattern={on 0.84pt off 2.51pt}]  (486.78,79.8) -- (486.64,299.42) ;
\draw  [dash pattern={on 0.84pt off 2.51pt}]  (571.58,80.79) -- (571.47,251.09) -- (571.44,300.41) ;
\draw  [dash pattern={on 0.84pt off 2.51pt}]  (630.73,80.79) -- (630.59,300.41) ;
\draw    (410.03,225.77) -- (468.9,166.27) ;
\draw    (486.09,140.29) -- (544.96,80.79) ;
\draw    (571.44,300.41) -- (630.3,240.91) ;
\draw    (467.32,241.23) -- (485.66,224.46) ;
\draw    (545.21,166.27) -- (571.44,141.61) ;
\draw    (88.87,70.79) -- (88.73,290.41) ;
\draw    (88.73,290.41) -- (308.59,290.41) ;
\draw  [dash pattern={on 0.84pt off 2.51pt}]  (108.73,70.8) -- (107.73,336.8) ;
\draw  [dash pattern={on 0.84pt off 2.51pt}]  (138.73,69.8) -- (139.73,306.8) ;
\draw  [dash pattern={on 0.84pt off 2.51pt}]  (167.73,70.8) -- (167.73,356.8) ;
\draw  [dash pattern={on 0.84pt off 2.51pt}]  (269.73,69.8) -- (270.73,304.8) ;
\draw  [dash pattern={on 0.84pt off 2.51pt}]  (299.73,68.8) -- (297.73,342.8) ;
\draw  [dash pattern={on 0.84pt off 2.51pt}]  (87.75,270.82) -- (307.75,270.78) ;
\draw  [dash pattern={on 0.84pt off 2.51pt}]  (86.74,240.82) -- (308.73,240.8) ;
\draw  [dash pattern={on 0.84pt off 2.51pt}]  (87.74,211.82) -- (309.73,210.8) ;
\draw  [dash pattern={on 0.84pt off 2.51pt}]  (86.72,109.82) -- (309.73,109.8) ;
\draw  [dash pattern={on 0.84pt off 2.51pt}]  (85.72,79.82) -- (309.73,78.8) ;
\draw    (108.73,240.8) -- (137.73,211.8) ;
\draw    (139.73,108.8) -- (168.73,79.8) ;
\draw    (270.73,268.8) -- (299.73,239.8) ;
\draw    (168.73,210.8) -- (269.73,108.8) ;
\draw    (300.73,77.8) -- (309.73,70.8) ;
\draw    (88.73,290.41) -- (108.73,270.8) ;

\draw (443.16,319.61) node [anchor=north west][inner sep=0.75pt]    {$1-2\lambda _{i}$};
\draw (477.91,301.86) node [anchor=north west][inner sep=0.75pt]    {$\lambda _{i}$};
\draw (560.67,302.85) node [anchor=north west][inner sep=0.75pt]    {$2\lambda _{i}$};
\draw (523.08,323.56) node [anchor=north west][inner sep=0.75pt]    {$1-\lambda _{i}$};
\draw (340.62,232.82) node [anchor=north west][inner sep=0.75pt]    {$1-2\lambda _{i}$};
\draw (348.57,156.87) node [anchor=north west][inner sep=0.75pt]    {$1-\lambda _{i}$};
\draw (389.18,214.08) node [anchor=north west][inner sep=0.75pt]    {$\lambda _{i}$};
\draw (383.2,131.23) node [anchor=north west][inner sep=0.75pt]    {$2\lambda _{i}$};
\draw (494,45.4) node [anchor=north west][inner sep=0.75pt]    {$\hat{\sigma_i}$};
\draw (172,35.4) node [anchor=north west][inner sep=0.75pt]    {$\sigma_{i}$};
\draw (71.16,339.61) node [anchor=north west][inner sep=0.75pt]    {$\lambda _{1} +2\lambda _{i} -1$};
\draw (131.91,306.86) node [anchor=north west][inner sep=0.75pt]    {$\lambda _{1}$};
\draw (131.16,362.33) node [anchor=north west][inner sep=0.75pt]    {$\lambda _{1} +1-2\lambda _{i}$};
\draw (241.91,309.86) node [anchor=north west][inner sep=0.75pt]    {$\lambda _{1} +\lambda _{i}$};
\draw (258.91,349.86) node [anchor=north west][inner sep=0.75pt]    {$1+\lambda _{1} -\lambda _{i}$};
\draw (2.91,72.33) node [anchor=north west][inner sep=0.75pt]    {$1+\lambda _{1} -\lambda _{i}$};
\draw (28.91,103.86) node [anchor=north west][inner sep=0.75pt]    {$\lambda _{1} +\lambda _{i}$};
\draw (0.16,203.33) node [anchor=north west][inner sep=0.75pt]    {$\lambda _{1} +1-2\lambda _{i}$};
\draw (63.91,233.86) node [anchor=north west][inner sep=0.75pt]    {$\lambda _{1}$};
\draw (-0.84,261.33) node [anchor=north west][inner sep=0.75pt]    {$\lambda _{1} +2\lambda _{i} -1$};

\end{tikzpicture}

    \caption{A pictoral representation of $\sigma_i,\hat{\sigma}_i$ as defined in Theorem \ref{ti defined}}
    \label{fig:generators for Zd}
\end{figure}
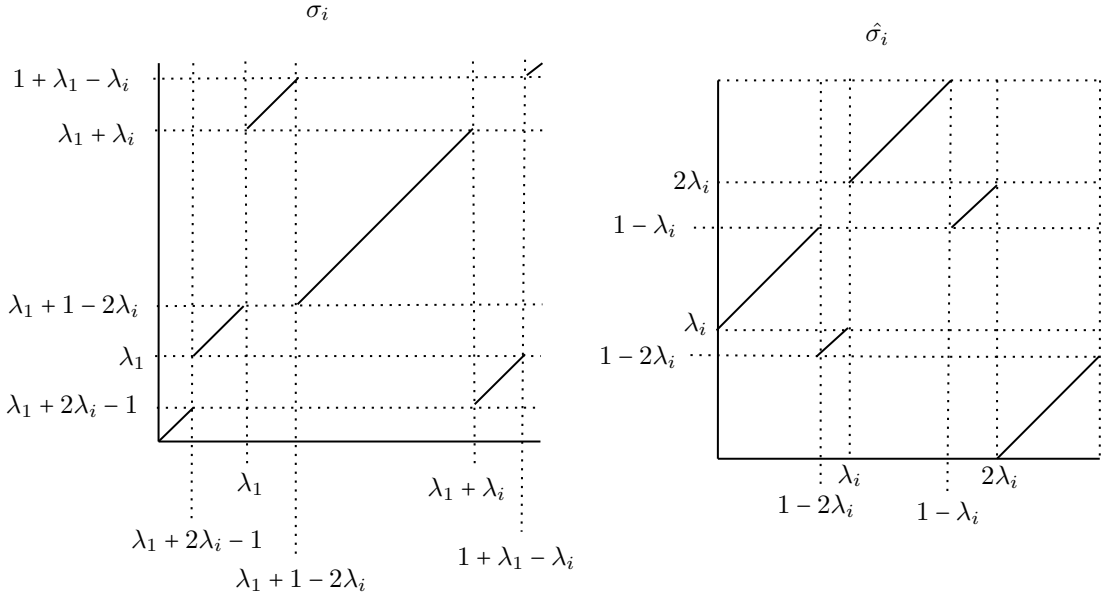
\begin{example}
    \label{concrete rank 2 generating set}
    Let $\Gamma \cong \mathbb{Z}^3$, and suppose $\Gamma \cong \mathbb{Z} \oplus \lambda_1 \mathbb{Z} \oplus \lambda_2 \mathbb{Z}$ (where wlog we assume $2/5<\lambda_i<1/2$ and that $\lambda_i$ are rationally independent). Then the main result of \cite{extensiveamenability} says that $D(IE(\Gamma))$ is a simple finitely generated amenable group. Moreover, we have an explicit generating set given by:
    $$\sigma_1(t)=\begin{cases}
   t+\lambda_1 & t \in (\lambda_1,1-\lambda_1] \ \\
   t+\lambda_1-1 & t \in (2\lambda_1,1] \ \\
    t+1-2\lambda_1 & t \in  (3\lambda_1-1,\lambda_1] \ \\
    t & \text{ else } 
\end{cases}, \quad \hat{\sigma}_1(t)=\begin{cases}
    t+\lambda_1 & t \in (0,1-2\lambda_1]\sqcup(\lambda_1,1-\lambda_1] \ \\
    t-2\lambda_1 & t \in  (2 \lambda_1,1] \ \\
    t & \text{ else }
\end{cases}$$
$$\sigma_2(t)=\begin{cases}
   t+\lambda_2 & t \in (\lambda_1,1-2\lambda_2+\lambda_1] \ \\
   t+\lambda_2-1 & t \in (\lambda_1+\lambda_2,1+\lambda_1-\lambda_2] \ \\
    t+1-2\lambda_2 & t \in  (\lambda_1+2\lambda_2-1,\lambda_1] \ \\
    t & \text{ else } 
\end{cases}, \quad \hat{\sigma}_2(t)=\begin{cases}
    t+\lambda_2 & t \in (0,1-2\lambda_2]\sqcup(\lambda_2,1-\lambda_2] \ \\
    t-2\lambda_2 & t \in  (2 \lambda_2,1] \ \\
    t & \text{ else }
\end{cases}$$
It would be interesting to find independent proof that this group is amenable by using this concrete picture of its generators.
\end{example}

\begin{theorem}
  Let $\Gamma$ be a dense countable subgroup of $\mathbb{R}$ with $1 \in \Gamma$. Suppose $\Gamma \cong \mathbb{Z}^{d+1}$, $d>1$. Suppose further that $\Gamma/\mathbb{Z} \cong \mathbb{Z}^{d}\oplus \mathbb{Z}_k, d>1$, where $k>9$. Using Lemma \ref{prescribed generating set}, we may take the generators of $\Gamma/\mathbb{Z}$ to be $1/k$ and irrational numbers $\lambda_i$, with $2/5<\lambda_1<...<\lambda_d<1/2$. Take $\lambda=\lambda_1$ in our construction of the subshift (Lemma \ref{IE as a subshift}). Then a generating set for $D(IE(\Gamma))$ is given by:
$$ S=\{\sigma_i,\hat{\sigma}_i, r_{k,a} \; : \; i \in \{ 1,2,..,d\}  \; \; a \in \{\lambda_1-1/k, \lambda_1-2/k, 1-2/k,1-1/k\} \}$$
Where $\sigma_i, \hat{\sigma}_i$ are as in Theorem \ref{generating set theorem}, and $r_{k,a}$ is given by:
$$ r_{k,a}(t)=\begin{cases}
    t+1/k \mod{\mathbb{Z}} \; & t \in (a,a+2/k] \ \\
    t-2/k \mod{\mathbb{Z}} \; & t \in (a+2/k,a+3/k]\ \\
    t & \text{ else }
\end{cases}$$
\label{generating set k}
\end{theorem}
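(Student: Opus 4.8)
The plan is to run exactly the machine of Theorem~\ref{generating set theorem}, but now over the group $G=\Gamma/\mathbb{Z}\cong\mathbb{Z}^d\oplus\mathbb{Z}_k$, which differs only by carrying one extra, torsion, generator. First I would fix generators of $G$ of the prescribed shape: a mild adaptation of Lemma~\ref{prescribed generating set} (applied to the free part, with the $\lambda_i$ shifted into $(2/5,1/2)$ by integer translations) lets me take the free part of $G$ generated by irrationals $\lambda_1<\cdots<\lambda_d$ with $2/5<\lambda_1$ and $\lambda_d<1/2$, while the $\mathbb{Z}_k$ factor is generated by $1/k$, which has order $k$ in $G$ since $1=k\cdot(1/k)$ and $1\in\mathbb{Z}$. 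Taking $\lambda=\lambda_1$, Lemma~\ref{IE as a subshift} realises $\Gamma/\mathbb{Z}\ltimes[0_+,1_-]$ as the subshift $\mathcal{E}_{\Gamma,\lambda_1}$ over $G$, and I then invoke the finitely-generated-abelian form of Theorem~\ref{nekthm}: $D(IE(\Gamma))$ is generated by the well-defined patch transformations $T_{\pi^i}$, with $i$ now ranging over the $d+1$ generators $\lambda_1,\dots,\lambda_d,1/k$.

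For the $d$ free generators $\lambda_i$ there is nothing new to compute. The interval computations in Lemmas~\ref{intervals lemma} and~\ref{only cylinders of interest} involve only the threshold $\lambda_1$, the generator $\lambda_i$, and the parameter $t\in[0,1)$; they never see the torsion factor, so they apply verbatim. Hence for each $i$ the only well-defined patches remain $\pi^i_{0,1,0}$ and $\pi^i_{0,0,1}$, and the corresponding transformations are exactly $\sigma_i$ and $\hat\sigma_i$ of Theorem~\ref{generating set theorem}.

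The genuinely new work is the torsion generator $g=1/k$. I would enumerate the eight patches $\pi\colon\{0,g,-g\}\to\{0,1\}$ and compute each $W_\pi$ as a union of half-open $t$-intervals from $x_t(0)=1\iff t\in[0,\lambda_1)$, $x_t(g)=1\iff t\in[1-g,1)\cup[0,\lambda_1-g)$, and $x_t(-g)=1\iff t\in[g,\lambda_1+g)$. The hypothesis $k>9$ makes $g$ small compared to $\lambda_1>2/5$, forcing the ordering $0<g<\lambda_1-g<\lambda_1<\lambda_1+g<1-g<1$; reading off the subintervals this determines shows exactly six patches are nonempty, of which the two ``long'' ones $\pi_{1,1,1}$ (on $[g,\lambda_1-g)$) and $\pi_{0,0,0}$ (on $[\lambda_1+g,1-g)$) have $W_\pi$ of length exceeding $g$. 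For well-definedness I must verify that $W_\pi$, $gW_\pi$ and $-gW_\pi$ are pairwise disjoint, i.e.\ that the defining interval $I$ and its translates $I\pm g$ (mod $1$) are disjoint; this fails precisely for the two long patches and holds for the remaining four, whose $W_\pi$ is a single interval of length $g$. Thus the surviving patches are $\pi_{1,1,0},\pi_{1,0,1},\pi_{0,0,1},\pi_{0,1,0}$.

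Finally I would transport each surviving $T_\pi$ through the conjugacy $\varphi\circ\phi$ into an explicit interval exchange. Each such $T_\pi$ three-cycles three consecutive length-$g$ intervals in the order $I-g\to I\to I+g\to I-g$, and under $q,q^*$ the half-open $t$-interval $[c,d)$ becomes the interval $(c,d]$ of $[0,1]$; matching the base point $a$ to the left endpoint of the lowest interval $I-g$ yields $a=1-1/k$, $\lambda_1-2/k$, $\lambda_1-1/k$, $1-2/k$ for the four patches above, so $T_\pi=r_{k,a}$ for exactly $a\in\{\lambda_1-1/k,\lambda_1-2/k,1-2/k,1-1/k\}$. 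Assembling the two families gives $S$ with $|S|=2d+4$. The main obstacle is the bookkeeping in this last step: tracking the wrap-around mod $1$, the open/closed conventions coming from $q^*$, and the direction of the three-cycle, so that it lands on the stated $r_{k,a}$ rather than its inverse or a relabelled shift; I would also confirm that it is precisely $k>9$ that places the six interval endpoints in the claimed order and forces the two long patches to fail disjointness.
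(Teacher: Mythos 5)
Your proposal is correct and follows essentially the same route as the paper's proof: the free generators $\lambda_i$ are handled verbatim by Theorem \ref{generating set theorem}, and for the torsion generator $1/k$ the paper likewise enumerates the eight patches on $\{0,1/k,-1/k\}$, finds $W_{\pi_{1,0,0}}=W_{\pi_{0,1,1}}=\emptyset$, discards $\pi_{1,1,1}$ and $\pi_{0,0,0}$ because their translated cylinder sets fail to be disjoint, and identifies the four surviving $T_\pi$ with $r_{k,a}$, $a\in\{\lambda_1-1/k,\lambda_1-2/k,1-2/k,1-1/k\}$, before invoking Theorem \ref{nekthm}. Your length-based reason why the two long patches fail disjointness (an interval of length exceeding $1/k$ must meet its own translate by $1/k$) is a clean, slightly more robust substitute for the paper's explicit witness points $x_{1/k}$ and $x_{1-2/k}$.
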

\begin{proof}
Let $\pi_{a,b,c}: \{0,1/k,-1/k\} \rightarrow \{0,1\}$ be the patch such that $\pi_{a,b,c}(0)=a, \pi_{a,b,c}(1/k)=b, \pi_{a,b,c}(-1/k)=c$. As before we fix the picture of this dynamical as a subshift where $x_t(0)=1 \iff t \in [0,\lambda_1)$.  
\begin{itemize}
    \item $x_t \in \sqcup_{b,c \in \{0,1\}} W_{\pi_{1,b,c}} \iff t \in [0,\lambda_1)$
    \item $x_t \in \sqcup_{a,c \in \{0,1\}} W_{\pi_{a,1,c}} \iff t \in [0,\lambda_1-1/k)\sqcup [1-1/k,1)$
    \item $x_t \in \sqcup_{a,b \in \{0,1\}} W_{\pi_{a,b,1}} \iff t \in [1/k,1/k + \lambda_1)$
\end{itemize}
Therefore
$\emptyset= W_{\pi_{1,0,0}}=W_{\pi_{0,1,1}}$. Also we have that
Note some of the translated cylinder sets are also not disjoint--
$x_{1/k} \in W_{\pi_{1,1,1}} \cap -1/k W_{\pi_{1,1,1}}$  and
$x_{1-2/k} \in W_{\pi_{0,0,0}} \cap +1/k W_{\pi_{0,0,0}}$
The other corresponding $T_{\pi_{a,b,c}}$ are well defined. It is straightforward to verify that:
$$ \varphi \circ \phi (T_{\pi_{0,1,0}})=r_{k, 1-2/k }, \quad \varphi \circ \phi (T_{\pi_{1,1,0}})=r_{k, 1-1/k },$$
$$\varphi \circ \phi (T_{\pi_{1,0,1}})=r_{k, \lambda_1-2/k }, \quad \varphi \circ \phi (T_{\pi_{0,0,1}})=r_{k, \lambda_1-1/k }.$$

Then, the proof follows via Theorem \ref{generating set theorem} and Theorem \ref{nekthm}. 
\end{proof}
Let us remark how these generating sets behave with respect to inclusions $\Gamma' \subset \Gamma$:
\begin{corollary}
    \label{nested generating sets}
    Let $\Gamma' \subset \Gamma$ be dense, nested cyclic subgroups, finitely generated subgroups of $\mathbb{R}$ containing 1. Given a subset $P \subset \mathbb{R}$, Let $Rank_\mathbb{Q}(P)$ denote the number of rationally independent numbers in $P$. Then there exists generating sets $S,S'$ of $\Gamma$, $\Gamma'$ such that:
    \begin{itemize}
        \item $S' \subset S$
        \item $|S \setminus S'| \leq  2Rank_{\mathbb{Q}}(\Gamma'\setminus \Gamma)+4 $. 
        \item $|S| \leq 2 Rank_\mathbb{Q}(\Gamma) +4 $
    \end{itemize}

\end{corollary}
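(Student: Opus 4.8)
The plan is to build the two generating sets from a single \emph{nested} choice of generating data for the quotient groups $\Gamma'/\mathbb{Z} \subseteq \Gamma/\mathbb{Z}$, exploiting the fact that each generator $\sigma_i,\hat{\sigma}_i$ (and each $r_{k,a}$) produced in Theorem \ref{generating set theorem} and Theorem \ref{generating set k} depends only on the window $\lambda=\lambda_1$ fixed in the subshift model (Lemma \ref{IE as a subshift}) and on the single generating element $\lambda_i$ (respectively $1/k$) to which it is attached. First I would record the containment $D(IE(\Gamma')) \subseteq D(IE(\Gamma))$: since $\Gamma' \subseteq \Gamma$, every element of $IE(\Gamma')$ has its breakpoints and translation lengths in $\Gamma'\subseteq\Gamma$, so $IE(\Gamma')\subseteq IE(\Gamma)$ and hence the derived subgroups are nested. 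This is what makes the requirement $S'\subseteq S$ meaningful in the first place.

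Next I would fix the common data. Both $\Gamma'/\mathbb{Z}$ and $\Gamma/\mathbb{Z}$ are finitely generated subgroups of $\mathbb{R}/\mathbb{Z}$, hence of the form $\mathbb{Z}^{d'}\oplus\mathbb{Z}_{k'}$ and $\mathbb{Z}^{d}\oplus\mathbb{Z}_{k}$ with $d'\le d$ and $k'\mid k$. Using Lemma \ref{prescribed generating set} I would choose a generating set $\lambda_1<\dots<\lambda_{d'}$ of $\Gamma'/\mathbb{Z}$ (together with the torsion element $1/k'$) lying in $(2/5,1/2)$, and then apply the same lemma to $\Gamma/\mathbb{Z}$ so as to \emph{extend} this set, adjoining further elements $\lambda_{d'+1}<\dots<\lambda_{d}$ of $\Gamma/\mathbb{Z}$ in $(\lambda_{d'},1/2)$ with $\{\lambda_1,\dots,\lambda_d\}$ (plus $1/k$) generating $\Gamma/\mathbb{Z}$. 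The point is that $\lambda_1$ is kept as the common smallest element, and I use the single window $\lambda=\lambda_1\in\Gamma'/\mathbb{Z}$ in both subshift constructions.

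With this compatible data in hand, Theorem \ref{generating set theorem} and Theorem \ref{generating set k} output $S'=\{\sigma_i,\hat{\sigma}_i : i\le d'\}$ (together with the $r_{k',a}$) for $D(IE(\Gamma'))$ and $S=\{\sigma_i,\hat{\sigma}_i : i\le d\}$ (together with the $r_{k,a}$) for $D(IE(\Gamma))$. Because the defining piecewise formulas for $\sigma_i$ and $\hat{\sigma}_i$ involve only $\lambda_1$ and $\lambda_i$, which are identical in the two choices whenever $i\le d'$, these homeomorphisms are literally the same elements of $D(IE(\Gamma))\supseteq D(IE(\Gamma'))$; thus $S'\subseteq S$ up to the torsion generators. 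The difference $S\setminus S'$ then consists of the $2(d-d')$ new pairs $\sigma_i,\hat{\sigma}_i$ with $d'<i\le d$ together with at most four elements $r_{k,a}$. The last estimate is exactly the count of Theorem \ref{generating set k}, since $|S|=2d+4=2\,Rank_{\mathbb{Q}}(\Gamma)+2\le 2\,Rank_{\mathbb{Q}}(\Gamma)+4$ (using $Rank_{\mathbb{Q}}(\Gamma)=d+1$); and since for a proper dense inclusion $\Gamma\setminus\Gamma'$ contains $g+\Gamma'$ for any $g\in\Gamma\setminus\Gamma'$, whose $\mathbb{Q}$-span is all of $\Gamma$, one has $Rank_{\mathbb{Q}}(\Gamma\setminus\Gamma')=Rank_{\mathbb{Q}}(\Gamma)$, so the middle estimate $|S\setminus S'|\le |S|\le 2\,Rank_{\mathbb{Q}}(\Gamma\setminus\Gamma')+4$ follows as well.

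The main obstacle is the torsion. When $k'\ne k$ the generators $r_{k',a}$ of $S'$ are built from the window size $1/k'$ rather than $1/k$, so a priori they need not appear verbatim among the $r_{k,a}$ of $S$; matching them forces one either to arrange $k'=k$ (by a careful choice of torsion generator) or to rewrite the torsion part of $S'$ in terms of $1/k$, and this is where the slack $+4$ in the stated bound is absorbed. The second delicate point, which I would verify directly, is that Lemma \ref{prescribed generating set} can be run so as to \emph{enlarge} an already-chosen generating set of $\Gamma'/\mathbb{Z}$ into one of $\Gamma/\mathbb{Z}$ while keeping the old generators (and in particular $\lambda_1$) fixed, rather than merely producing some generating set supported in $(2/5,1/2)$; this compatibility is exactly what guarantees the verbatim coincidence of the $\sigma_i,\hat{\sigma}_i$ and hence the inclusion $S'\subseteq S$.
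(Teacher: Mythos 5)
Your core strategy --- fix the window $\lambda=\lambda_1$ once and for all, choose the generators of $\Gamma'/\mathbb{Z}$ and of $\Gamma/\mathbb{Z}$ in a nested fashion inside $(2/5,1/2)$, and observe that the formulas in Theorem \ref{generating set theorem} for $\sigma_i,\hat{\sigma}_i$ involve only $\lambda_1$ and $\lambda_i$, so that the set produced for $\Gamma'$ sits verbatim inside the set produced for $\Gamma$ --- is exactly the argument this corollary implicitly rests on (the paper supplies no written proof), and your reading of the misprints (that $S,S'$ generate $D(IE(\Gamma)),D(IE(\Gamma'))$, and that the middle bound concerns $\Gamma\setminus\Gamma'$) is the right one. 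The proof is complete in the two clean cases: $\Gamma'/\mathbb{Z}$ torsion free, or $\Gamma\cap\mathbb{Q}=\Gamma'\cap\mathbb{Q}$. However, the torsion mismatch you flag is a genuine gap, and neither of your proposed repairs can work. Writing $\Gamma\cap\mathbb{Q}=\tfrac{1}{k}\mathbb{Z}$ and $\Gamma'\cap\mathbb{Q}=\tfrac{1}{k'}\mathbb{Z}$, the integers $k,k'$ are intrinsic invariants of the subgroups, so when $1<k'<k$ you cannot ``arrange $k'=k$ by a careful choice of torsion generator''. Nor can the torsion part of $S'$ be ``rewritten in terms of $1/k$'': every element of $D(IE(\Gamma'))$ has all of its angles in $\Gamma'$, and $1/k\notin\Gamma'$. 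The situation is in fact worse: since angles compose additively, the only elements of the Theorem \ref{generating set k} set for $\Gamma$ that lie in $IE(\Gamma')$ are the $\sigma_i,\hat{\sigma}_i$ with $i\le d'$, and the subgroup they generate has all angles in $\mathbb{Z}+\sum_{i\le d'}\lambda_i\mathbb{Z}$, which misses $r_{k',a}$; hence \emph{no} subset of that set generates $D(IE(\Gamma'))$, and the $+4$ slack --- which lives in the cardinality bounds --- cannot restore the containment $S'\subseteq S$. A genuine fix must enlarge $S$ itself, e.g.\ $S=S'\cup\{\sigma_i,\hat{\sigma}_i : d'<i\le d\}\cup\{r_{k,1-1/k},r_{k,1-2/k}\}$, which has size exactly $2\,Rank_{\mathbb{Q}}(\Gamma)+4$; but then one must prove generation, say by showing that the two omitted elements $r_{k,\lambda_1-1/k}, r_{k,\lambda_1-2/k}$ are conjugates of the two included ones by elements of $D(IE(\Gamma'+\sum_{i>d'}\lambda_i\mathbb{Z}))$ (conjugating $r_{k,a}$ by an element acting as translation by $\lambda_1$ near $(a,a+3/k]$ yields $r_{k,a+\lambda_1}$). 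No argument of this kind appears in your proposal.

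The second point you defer also hides a failure mode rather than a routine verification. Extending a generating set of $\Gamma'/\mathbb{Z}$ to one of $\Gamma/\mathbb{Z}$ by adjoining only $d-d'$ elements is possible precisely when $\Gamma/\Gamma'$ is torsion free; in general the number of new elements needed is the minimal number of generators of the quotient $\Gamma/\Gamma'$. Take $\Gamma'=\mathbb{Z}+2\lambda_1\mathbb{Z}+\cdots+2\lambda_d\mathbb{Z}\subset\Gamma=\mathbb{Z}+\lambda_1\mathbb{Z}+\cdots+\lambda_d\mathbb{Z}$: then $\Gamma/\Gamma'\cong\mathbb{Z}_2^{d}$, so any nested choice of generating sets for the acting groups has sizes at least $d$ and $2d$, and your construction outputs $|S|=4d$, violating the required bound $2\,Rank_{\mathbb{Q}}(\Gamma)+4=2d+6$ as soon as $d\ge 4$. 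In this regime your argument (and, as far as I can see, the corollary's implicit one) does not establish the stated cardinality bound; repairing it would require either relations among the $\sigma_i,\hat{\sigma}_i$ attached to rationally dependent generators or a genuinely different generating scheme.
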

Let us remark that IET locally embeds to groups of the form $\Gamma=1/k \mathbb{Z}\oplus \bigoplus_{i=1}^d \lambda_i \mathbb{Z}$, where $d>1, k>9$. This is just Remark \ref{local embedding}, and letting $k$ being arbitrarily large enough to contain all the rational angles of a given finite subset of IET. Note also that $D(IE(\Gamma))$ is amenable iff $IE(\Gamma)$ is amenable. Hence, we have the following reductions of Question \ref{question intro}:
\begin{corollary}
The following are equivalent:
\begin{itemize}
    \item IET is nonamenable. 
    \item There exists $k>9$, and a finite collection of irrational numbers $\lambda_1,...,\lambda_d \in [2/5,1/2)$ such that the group generated by $S$ as in Theorem \ref{generating set k} is nonamenable.
\end{itemize}
\end{corollary}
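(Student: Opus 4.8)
The plan is to prove the two implications separately, leaning on the fact that amenability is a local property of groups — a group is amenable iff all of its finitely generated subgroups are — together with the local embedding of IET into the groups $IE(\Gamma)$. First I would record two facts. Every element of $IE(\Gamma)$ is an interval exchange whose translation lengths lie in $\Gamma \subset \mathbb{R}$, so $IE(\Gamma) \le \mathrm{IET}$ for each $\Gamma$; and by Theorem \ref{generating set k} the set $S$ generates $D(IE(\Gamma))$ precisely for $\Gamma = \tfrac1k\mathbb{Z} \oplus \bigoplus_{i=1}^{d}\lambda_i\mathbb{Z}$ with $k>9$, $d>1$ and the $\lambda_i \in [2/5,1/2)$ irrational, where moreover $D(IE(\Gamma))$ is amenable iff $IE(\Gamma)$ is.

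The reverse implication is then immediate: since $\langle S\rangle = D(IE(\Gamma)) \le IE(\Gamma) \le \mathrm{IET}$ and subgroups of amenable groups are amenable, nonamenability of $\langle S\rangle$ forces nonamenability of IET.

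For the forward implication, suppose IET is nonamenable. By the local property of amenability it contains a finitely generated nonamenable subgroup $H = \langle \gamma_1,\dots,\gamma_m\rangle$. As the set $\{\gamma_j t - t : t\in[0,1],\ 1\le j\le m\}$ is finite, Remark \ref{local embedding} (and the remark preceding this corollary) places $H$ inside some $IE(\Gamma)$ with $\Gamma$ of the form $\tfrac1k\mathbb{Z}\oplus\bigoplus_{i=1}^d\lambda_i\mathbb{Z}$ and $1\in\Gamma$. Enlarging $\Gamma$ if needed — raising $k$ above $9$ and adjoining rationally independent irrational generators — only enlarges $IE(\Gamma)$, so we may assume $k>9$ and $d>1$, and by Lemma \ref{prescribed generating set} that the $\lambda_i$ lie in $(2/5,1/2)$. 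Then $IE(\Gamma) \ge H$ is nonamenable, hence so is $D(IE(\Gamma)) = \langle S\rangle$, which yields the required $k$ and $\lambda_i$.

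The only delicate point is normalising the ambient group to the exact form demanded by Theorem \ref{generating set k}: the local embedding supplies a polycyclic $\Gamma_0 \cong \mathbb{Z}^n$, and one must enlarge it so that its rational part is $\tfrac1k\mathbb{Z}$ with $k>9$, its free rank exceeds $1$, and its irrational generators sit in $(2/5,1/2)$, all while keeping $H$ inside. Each such enlargement is harmless because $\Gamma_0 \subseteq \Gamma$ gives $IE(\Gamma_0) \subseteq IE(\Gamma)$ and amenability passes to subgroups; this bookkeeping, rather than any conceptual difficulty, is where I expect the work to lie.
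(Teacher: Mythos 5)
Your proposal is correct and follows essentially the same route as the paper: the paper's argument is exactly the combination of the local embedding of IET into groups $\Gamma=\tfrac{1}{k}\mathbb{Z}\oplus\bigoplus_{i=1}^{d}\lambda_i\mathbb{Z}$ (with $k$ taken large enough to absorb the rational angles of any finite subset of IET), the locality of amenability, and the facts that $\langle S\rangle = D(IE(\Gamma))$ and that $D(IE(\Gamma))$ is amenable iff $IE(\Gamma)$ is. The normalisation bookkeeping you flag at the end is precisely what the paper disposes of in the remark preceding the corollary, so nothing further is needed.
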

And similarly for the existence of a nonamenable free subgroup. 

\section{Homology of Interval Exchange Groups}

Recently there have been impressive developments in the understanding of groupoid homology, and the connections to K-Theory of the reduced C*-algebra (e.g. \cite{bonicke2021dynamic}) and the group homology of Topological full groups \cite{li2022}. We look to apply such results to further our understanding of the interval exchange groups. 
Let us first compute the homology of the underlying groupoids. Let us remark that as with all globalisable partial actions we may take via 
[\cite{matui2012homology}, Theorem 3.6] that:
   $$ H_*([0_+,1_-]\ltimes_\alpha \Gamma ) \cong H_*( \Gamma \ltimes \mathbb{R}_\Gamma |_{[0_+1_-]}^{[0_+1_-]})\cong H_*(\Gamma \ltimes \mathbb{R}_\Gamma) \cong H_*(\Gamma,C_c(\mathbb{R}_\Gamma,\mathbb{Z}))$$
In other words, the groupoid homology with coefficients in $\mathbb{Z}$ reduces to the group homology with coefficients in $C_c(\mathbb{R}_\Gamma,\mathbb{Z})$. It is for this reason we did not introduce groupoid homology. In fact, in our case the groupoid homology is just a shifted version of the homology of $\Gamma$:
\begin{lemma}
  Let $\Gamma$ be a dense additive subgroup of $\mathbb{R}$ containing $1$. Then, $k \geq 0$ $H_k(C([0_+,1_-])\ltimes_\alpha \Gamma ) \cong H_{k+1}(\Gamma)$ \label{gpd hom}
\end{lemma}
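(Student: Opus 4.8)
The plan is to reduce everything to a computation in group homology and then realise the coefficient module as the augmentation ideal of $\mathbb{Z}[\Gamma]$. By the identification recorded just before the statement,
$$H_*([0_+,1_-]\ltimes_\alpha \Gamma) \cong H_*(\Gamma, C_c(\mathbb{R}_\Gamma, \mathbb{Z})),$$
so it suffices to understand the $\Gamma$-module $C_c(\mathbb{R}_\Gamma, \mathbb{Z})$, where $\Gamma$ acts by translation. First I would analyse this module directly. Every element is a $\mathbb{Z}$-linear combination of indicators $1_{[a_+, b_-]}$ with $a, b \in \Gamma$, and such a function is locally constant with jump discontinuities only at points of $\Gamma$, since the doubled points $a_\pm$ are precisely where a clopen interval can begin or end.

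This suggests defining a \emph{jump} map $d\colon C_c(\mathbb{R}_\Gamma, \mathbb{Z}) \to \mathbb{Z}[\Gamma]$ by $d(f) = \sum_{a \in \Gamma}\bigl(f(a_+) - f(a_-)\bigr)\,\delta_a$; concretely $d(1_{[a_+,b_-]}) = \delta_a - \delta_b$. I would then check three things: (i) $d$ is $\Gamma$-equivariant, where $\Gamma$ acts on $\mathbb{Z}[\Gamma]$ by left translation $c\cdot\delta_b = \delta_{b+c}$, which is immediate since translation commutes with recording jumps; (ii) $d$ is injective, because a compactly supported locally constant function with no jumps is globally constant, hence zero; (iii) the image of $d$ is exactly the augmentation ideal $I\Gamma = \ker\bigl(\epsilon\colon \mathbb{Z}[\Gamma]\to\mathbb{Z}\bigr)$, since the jumps of a compactly supported function telescope to zero, while conversely any zero-sum configuration of jumps is realised by an explicit step function. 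Thus $d$ is an isomorphism $C_c(\mathbb{R}_\Gamma, \mathbb{Z}) \cong I\Gamma$ of $\Gamma$-modules.

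With this in hand I would invoke the fundamental short exact sequence of $\Gamma$-modules
$$0 \to I\Gamma \to \mathbb{Z}[\Gamma] \xrightarrow{\epsilon} \mathbb{Z} \to 0,$$
together with the fact that $\mathbb{Z}[\Gamma]$ is free, so $H_n(\Gamma, \mathbb{Z}[\Gamma]) = 0$ for $n \geq 1$ and $H_0(\Gamma, \mathbb{Z}[\Gamma]) = \mathbb{Z}$. The associated long exact sequence in homology then forces the connecting maps $H_{n+1}(\Gamma, \mathbb{Z}) \to H_n(\Gamma, I\Gamma)$ to be isomorphisms for all $n \geq 1$. For $n = 0$ I would read off from the tail of the sequence, using that $\epsilon$ induces an isomorphism on coinvariants $\mathbb{Z}[\Gamma]_\Gamma \cong \mathbb{Z}$, that $H_0(\Gamma, I\Gamma) \cong H_1(\Gamma, \mathbb{Z})$. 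Combining these gives $H_k(\Gamma, C_c(\mathbb{R}_\Gamma,\mathbb{Z})) \cong H_{k+1}(\Gamma)$ for every $k \geq 0$, which is the claim.

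The main obstacle is step (iii): pinning down the image of the jump map and verifying the module isomorphism cleanly, in particular handling the doubled-point topology of $\mathbb{R}_\Gamma$ so that \emph{the jump at $a$} is unambiguous and genuinely $\Gamma$-equivariant. Once the identification $C_c(\mathbb{R}_\Gamma,\mathbb{Z}) \cong I\Gamma$ is established, the dimension shift is formal, being the standard relation $H_n(\Gamma, I\Gamma) \cong H_{n+1}(\Gamma, \mathbb{Z})$ coming from $\mathbb{Z}[\Gamma]$ sitting in degree zero of a free resolution of the trivial module.
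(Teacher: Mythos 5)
Your proof is correct and follows essentially the same route as the paper: there, the short exact sequence $0 \to C_c(\mathbb{R}_\Gamma,\mathbb{Z}) \to C_c(\hat{\mathbb{R}}_\Gamma,\mathbb{Z}) \to \mathbb{Z} \to 0$ is produced by one-point compactifying $\mathbb{R}_\Gamma$ and identifying $C_c(\hat{\mathbb{R}}_\Gamma,\mathbb{Z}) \cong \mathbb{Z}\Gamma$ as a free module via the basis $\{1_{[a_+,\infty]}\}_{a \in \Gamma}$, after which the same long exact sequence argument yields the degree shift. Your jump map is precisely that identification restricted to the kernel (indeed $1_{[a_+,b_-]} = 1_{[a_+,\infty]} - 1_{[b_+,\infty]} \mapsto \delta_a - \delta_b$), so your realisation of $C_c(\mathbb{R}_\Gamma,\mathbb{Z})$ as the augmentation ideal $I\Gamma$ is the paper's construction in algebraic rather than geometric dress.
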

\begin{proof}

We follow a similar technique to that used in (\cite{xinlambda}, Proposition 5.8).
     Let $\hat{\mathbb{R}}_\Gamma=\mathbb{R}_\Gamma \bigcup \{+\infty \}$ be the one sided compactification of $\mathbb{R}_\Gamma$, i.e. the basic open sets in $\hat{\mathbb{R}}_\Gamma$ are $[a_+,b_-], [a_+,\infty], a,b \in \Gamma$ . Let $\Gamma \acts \hat{\mathbb{R}}_\Gamma $ by acting as $\alpha$ on $\mathbb{R}_\Gamma$ and by fixing $\infty$. 
     
    Because $\{1_{[a_+, \infty]} \; : a \in \Gamma \}$ forms a $\mathbb{Z}$-basis of $C_c(\hat{\mathbb{R}}_\Gamma,\mathbb{Z})$ that $\Gamma$ acts freely and transitively on. Hence, $C_c(\hat{\mathbb{R}}_\Gamma,\mathbb{Z}) \cong \mathbb{Z}\Gamma$ as $\mathbb{Z}\Gamma$ modules. 
     $$ H_*(\Gamma,C_c(\hat{\mathbb{R}}_\Gamma,\mathbb{Z})) \cong \begin{cases} \mathbb{Z} & *=0 \ \\ 0 & \text{else} \end{cases}.$$
     Then note that 
     $$0 \rightarrow C_c(\mathbb{R}_\Gamma, \mathbb{Z}) \rightarrow C_c (\hat{\mathbb{R}}_\Gamma, \mathbb{Z}) \rightarrow \mathbb{Z} \rightarrow 0 $$
Is a short exact sequence of $\mathbb{Z} \Gamma$ modules. By Proposition 6.1, Chapter III of Brown \cite{brown2012cohomology}, we get that there is a long exact sequence:
$$...\rightarrow H_1(\Gamma, C_c(\hat{\mathbb{R}}_\Gamma,\mathbb{Z}))\rightarrow H_1(\Gamma) \rightarrow H_0(\Gamma,C_c(\mathbb{R}_\Gamma,\mathbb{Z})) \rightarrow  H_0(\Gamma, C_c(\hat{\mathbb{R}}_\Gamma,\mathbb{Z}))\rightarrow H_0(\Gamma) \rightarrow 0$$
But we already know that $H_1(\Gamma)=\Gamma_{ab}=\Gamma, H_0(\Gamma)=\mathbb{Z}$, $H_0(\Gamma, C_c(\hat{\mathbb{R}}_\Gamma,\mathbb{Z}))=\mathbb{Z}$ and for $H_i(\Gamma, C_c(\hat{\mathbb{R}}_\Gamma,\mathbb{Z}))=0$ for $i \geq 1$. Hence we get that there is a long exact sequence of the form:
$$ \rightarrow 0 \rightarrow  H_{i+1}(\Gamma) \rightarrow H_i(C([0_+,1_-])\ltimes_\alpha \Gamma ) \rightarrow 0 \rightarrow ...  $$
$$...  \rightarrow 0 \rightarrow \Gamma \rightarrow H_0(\Gamma,C_c(\mathbb{R}_\Gamma,\mathbb{Z})) \rightarrow  \mathbb{Z} \rightarrow \mathbb{Z} \rightarrow 0 $$
The section around $H_i(C([0_+,1_-])\ltimes_\alpha \Gamma )$ looks like this for  all $i \geq 0$, forcing an isomorphism $ H_{i+1}(\Gamma) \cong H_i(C([0_+,1_-])\ltimes_\alpha \Gamma )$. Around $H_0(\Gamma,C_c(\mathbb{R}_\Gamma,\mathbb{Z}))$, exactness forces the map $H_0(\Gamma,C_c(\mathbb{R}_\Gamma,\mathbb{Z}) \rightarrow \mathbb{Z}$ to be $0$: 
$$H_0(\Gamma,C_c(\mathbb{R}_\Gamma,\mathbb{Z}) \xrightarrow{0} \mathbb{Z} \xrightarrow{\cong} \mathbb{Z} \xrightarrow{0} 0 $$ hence, there is also an isomorphism $\Gamma \cong H_0(\Gamma,C_c(\mathbb{R}_\Gamma,\mathbb{Z})$.  The conclusion follows. 
\end{proof}
Note in particular that $H_0([0_+,1_-] \ltimes_\alpha \Gamma)=H_1(\Gamma)=\Gamma_{ab}=\Gamma$. 
\begin{corollary}
Let $\Gamma$ be a dense additive subgroup of $\mathbb{R}$ containing $1$. Then, $H_0([0_+,1_-] \ltimes_\alpha \Gamma) \cong \Gamma$
\end{corollary}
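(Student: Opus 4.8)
The plan is to read this off directly from Lemma \ref{gpd hom}, of which it is simply the degree-zero instance. Specialising the isomorphism $H_k(C([0_+,1_-])\ltimes_\alpha \Gamma ) \cong H_{k+1}(\Gamma)$ to $k=0$ gives
$$H_0([0_+,1_-] \ltimes_\alpha \Gamma) \cong H_1(\Gamma).$$
So the only remaining task is to identify $H_1(\Gamma)$.

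First I would invoke the standard fact from group homology that for any group $G$ the first integral homology group is the abelianisation, $H_1(G) \cong G_{ab}$ (see e.g. Brown \cite{brown2012cohomology}, the same reference already used in Lemma \ref{gpd hom}). Applying this to $G = \Gamma$ yields $H_1(\Gamma) \cong \Gamma_{ab}$.

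Finally I would use that $\Gamma$ is by hypothesis an additive subgroup of $\mathbb{R}$, hence abelian, so that $\Gamma_{ab} = \Gamma$. Chaining the three identifications gives $H_0([0_+,1_-] \ltimes_\alpha \Gamma) \cong \Gamma$, as required.

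There is no real obstacle here: all the work is contained in the proof of Lemma \ref{gpd hom} (the construction of the one-point compactification $\hat{\mathbb{R}}_\Gamma$, the identification $C_c(\hat{\mathbb{R}}_\Gamma,\mathbb{Z}) \cong \mathbb{Z}\Gamma$, and the resulting long exact sequence). The corollary merely records the boundary case, and indeed the sentence immediately preceding it already notes the chain $H_0([0_+,1_-] \ltimes_\alpha \Gamma)=H_1(\Gamma)=\Gamma_{ab}=\Gamma$. I would therefore keep the proof to a single line citing Lemma \ref{gpd hom} and the abelianness of $\Gamma$.
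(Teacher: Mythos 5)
Your proposal is correct and matches the paper's own argument exactly: the paper derives the corollary by specialising Lemma \ref{gpd hom} to degree zero and noting $H_0([0_+,1_-] \ltimes_\alpha \Gamma)=H_1(\Gamma)=\Gamma_{ab}=\Gamma$, precisely the chain of identifications you give. Nothing further is needed.
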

Note that for this class of groupoids, the HK conjecture holds, that is the K -theory of reduced groupoid C$^*$ -algebra can be related to the groupoid homology; the HK Conjecture holds for $[0_+ ,1_- ] \ltimes_\alpha \Gamma$.
\begin{theorem}
Let $\Gamma$ be a dense countable subgroup of $\mathbb{R}$ containing 1. 
Then, the HK conjecture holds for $[0_+,1_-] \ltimes_\alpha \Gamma$, i.e.
$$K_0(C_r^*(C([0_+,1_-])\ltimes_\alpha \Gamma )) \cong K_1(C_r^*(\Gamma)) \cong \bigoplus_{i =0}^\infty H_{2i}([0_+,1_-] \ltimes_\alpha \Gamma)\cong \bigoplus_{i =0}^\infty H_{2i+1}(\Gamma) $$ 
$$K_1(C_r^*(C([0_+,1_-])\ltimes_\alpha \Gamma )) \cong K_0(C_r^*(\Gamma))/\mathbb{Z}[1]_0 \cong \bigoplus_{i =0}^\infty H_{2i+1}([0_+,1_-] \ltimes_\alpha \Gamma)\cong \bigoplus_{i =1}^\infty H_{2i}(\Gamma) $$ \label{hk conjecture}
\end{theorem}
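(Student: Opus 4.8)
The plan is to verify the HK conjecture by computing both sides independently and observing that they agree, assembling the four-term chain on each line from three ingredients, two of which are already available. The first isomorphism on each line, $K_0(\G_\Gamma)\cong K_1(C^*_r(\Gamma))$ and $K_1(\G_\Gamma)\cong K_0(C^*_r(\Gamma))/\mathbb{Z}[1]_0$, is precisely the content of the Elliott-invariant computation in Lemma \ref{elliot invariant}. The last isomorphism on each line is purely formal from Lemma \ref{gpd hom}: since $H_k(\G_\Gamma)\cong H_{k+1}(\Gamma)$, reindexing gives $\bigoplus_{i\geq 0}H_{2i}(\G_\Gamma)\cong\bigoplus_{i\geq 0}H_{2i+1}(\Gamma)$ and $\bigoplus_{i\geq 0}H_{2i+1}(\G_\Gamma)\cong\bigoplus_{i\geq 1}H_{2i}(\Gamma)$. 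Thus the only genuinely new input is the middle identification, which reduces to computing the K-theory of the group C*-algebra $C^*_r(\Gamma)$ in terms of the group homology of $\Gamma$, namely
$$K_0(C^*_r(\Gamma))\cong\bigoplus_{i\geq 0}H_{2i}(\Gamma),\qquad K_1(C^*_r(\Gamma))\cong\bigoplus_{i\geq 0}H_{2i+1}(\Gamma).$$

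For this computation I would exploit that $\Gamma$ is a countable torsion-free abelian group, hence a direct limit $\Gamma=\varinjlim_n\Gamma_n$ of finitely generated free abelian subgroups $\Gamma_n\cong\mathbb{Z}^{r_n}$, with $C^*_r(\Gamma)=\varinjlim_n C^*(\Gamma_n)$ and each $C^*(\mathbb{Z}^{r_n})\cong C(\mathbb{T}^{r_n})$. For the torus the K-theory is classical: the Atiyah--Hirzebruch/Chern-character comparison degenerates integrally, giving a natural isomorphism $K_i(C(\mathbb{T}^{r}))\cong\bigoplus_{j\equiv i\,(\mathrm{mod}\,2)}\Lambda^j\mathbb{Z}^{r}\cong\bigoplus_{j\equiv i\,(\mathrm{mod}\,2)}H_j(\mathbb{Z}^{r})$. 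One then passes to the limit: continuity of K-theory gives $K_i(C^*_r(\Gamma))\cong\varinjlim_n K_i(C^*(\Gamma_n))$, while group homology commutes with direct limits of groups, so $H_j(\Gamma)\cong\varinjlim_n H_j(\Gamma_n)$, and since countable filtered colimits commute with the countable direct sum over $j$ the displayed formula follows. As a sanity check, for $\Gamma=\mathbb{Q}=\varinjlim\mathbb{Z}$ the dual maps $z\mapsto z^n$ act as the identity on $K_0$ and as multiplication on $K_1$, yielding $K_0=\mathbb{Z}=H_0(\mathbb{Q})$ and $K_1=\mathbb{Q}=H_1(\mathbb{Q})$.

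Finally I would reconcile the basepoint: the class $[1]_0\in K_0(C^*_r(\Gamma))$ corresponds exactly to the $H_0(\Gamma)=\mathbb{Z}$ summand, so quotienting by $\mathbb{Z}[1]_0$ deletes the $i=0$ term and produces $\bigoplus_{i\geq 1}H_{2i}(\Gamma)$, matching the $K_1(\G_\Gamma)$ line, while the $K_0(\G_\Gamma)$ line needs no such correction. Stringing together Lemma \ref{elliot invariant}, the group-C*-algebra computation, and Lemma \ref{gpd hom} then yields all stated isomorphisms, which is exactly the assertion that the HK conjecture holds for $\G_\Gamma$. I expect the main obstacle to be the integral K-theory computation for $C^*_r(\Gamma)$ together with its naturality under the inclusions $\Gamma_n\hookrightarrow\Gamma_{n+1}$: one must verify that the torus isomorphisms are compatible with the dual covering maps $\mathbb{T}^{r_{n+1}}\to\mathbb{T}^{r_n}$, so that the direct system on K-theory is carried onto the direct system on homology, and confirm that no torsion or extension problems obstruct the \emph{integral} (rather than merely rational) statement---both of which are clean here precisely because $\Gamma$ is torsion-free abelian.
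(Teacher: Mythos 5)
Your proposal is correct, and it shares the paper's overall skeleton (verify the finitely generated case by explicit computation, then pass to direct limits), but the execution is genuinely different and, in one respect, tighter. The paper's proof cites Li's Remark 5.8 of \cite{xinlambda} for the ring case, asserts the polycyclic case as a ``direct computation,'' and then takes the inductive limit at the level of the groupoids themselves, claiming $K_i(C_r^*([0_+,1_-]\ltimes_\alpha\Gamma))\cong\varinjlim_n K_i(C_r^*([0_+,1_-]\ltimes_\alpha\Gamma_n))$ and likewise for groupoid homology; this step quietly glosses over the fact that the unit space $[0_+,1_-]$ is built from $\mathbb{R}_\Gamma$ and therefore changes with $\Gamma_n$, so the groupoids do not literally form an increasing union (one must pass through the crossed-product picture or invoke Lemma \ref{gpd hom} to justify the homology limit). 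You instead factor the theorem through Lemma \ref{elliot invariant} and Lemma \ref{gpd hom} first, which reduces everything to a single statement about the group itself, namely $K_0(C^*_r(\Gamma))\cong\bigoplus_{i\geq 0}H_{2i}(\Gamma)$ and $K_1(C^*_r(\Gamma))\cong\bigoplus_{i\geq 0}H_{2i+1}(\Gamma)$, and you prove that integrally via the classical identification $K^*(\mathbb{T}^r)\cong\Lambda^*\mathbb{Z}^r\cong H_*(\mathbb{Z}^r)$ together with naturality (the Chern character is injective on $K^*(\mathbb{T}^r)$ with image exactly integral cohomology, and the identification $H^1(\widehat{\Gamma}_n)\cong\Gamma_n$ by characters is covariant under inclusions, so the torus isomorphisms intertwine the dual maps $\mathbb{T}^{r_{n+1}}\to\mathbb{T}^{r_n}$ with the homology maps). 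Taking limits at the level of $C^*_r(\Gamma_n)$ and $H_*(\Gamma_n)$ then avoids the changing-unit-space issue entirely, and your bookkeeping of $[1]_0$ as the generator of the $H_0(\Gamma)$ summand correctly accounts for the quotient by $\mathbb{Z}[1]_0$ on the $K_1$ line. In short: what the paper buys by citation and groupoid-level limits, you buy by a self-contained, natural, integral torus computation; both are valid, and your route fills in precisely the two points the paper leaves implicit.
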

\begin{proof}
    This was observed in the case where $\Gamma$ is a ring in \cite{xinlambda}, Remark 5.8. One can verify the case where $\Gamma$ is polycyclic by direct computation. This is in fact enough, since we can write any $\Gamma$ as an inductive limit of polycyclic groups. Set $\Gamma=\text{lim}_{n \rightarrow}\Gamma_n$, where $\Gamma_n$ is polycyclic. It follows that $K_i(C_r^*([0_+,1_-] \ltimes_\alpha \Gamma)) \cong \text{lim}_{n \rightarrow} K_i(C_r^*([0_+,1_-] \ltimes_\alpha \Gamma_n)),   H_i([0_+,1_-] \ltimes_\alpha \Gamma) \cong \text{lim}_{n \rightarrow}  H_i([0_+,1_-] \ltimes_\alpha \Gamma_n)$; both the K-Theory and Groupoid homology decompose as direct limits of this case.
\end{proof} We would also like to understand the homology of $IE(\Gamma)$ in terms of the homology of $\Gamma$. Let us construct certain maps $I,j$ appearing in the AH conjecture. 
Let $B$ be a compact open bisection of $\Gamma \ltimes_\alpha [0_+,1_-]$ with $s(B) \cap r(B) =\emptyset$.   $$j: \Gamma \otimes \mathbb{Z}_2 \cong H_0(\Gamma \ltimes_\alpha [0_+,1_-]) \otimes \mathbb{Z}_2 \rightarrow IE(\Gamma)_{ab} \quad [1_{s(B)}]_{H_0} \rightarrow \gamma_B$$
Where $\gamma_B$ is the generator of the symmetric group, as in \ref{def symmetric}. 
$$I: IE(\Gamma)_{ab} \rightarrow H_1(\Gamma \ltimes \alpha)\cong H_2(\Gamma) \quad \hat{B} \mapsto [1_B]_{H_1}$$

Following \cite{li2022}, we can also determine some information about the homology of $IE(\Gamma)$ in terms of the homology of $[0_+,1_-] \ltimes_\alpha \Gamma$ using $I,j$. First we note that since $[0_+,1_-] \ltimes_\alpha \Gamma$ is a minimal, almost finite groupoid, with a Cantor unit space, [\cite{li2022}, Corollary E] allows us to describe the abelianization of these groups in terms of groupoid homology via the so-called AH conjecture. Substituting what we know, we have that:

\begin{lemma}[\cite{li2022}, Corollary E] Let $\Gamma$ be a dense additive subgroup of $\mathbb{R}$ containing $1$. Then,
 $I$ and $j$ are well defined and moreover there exists a long exact sequence:
    $$H_2(\mathsf{D}(IE(\Gamma)) \rightarrow H_3(\Gamma) \rightarrow \Gamma \otimes \mathbb{Z}_2 \xrightarrow{j} IE(\Gamma)_{ab}\xrightarrow{I} H_2(\Gamma) \rightarrow 0   $$ \label{ah conj}
\end{lemma}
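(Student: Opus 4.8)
The plan is to apply [\cite{li2022}, Corollary E] directly to $\G := [0_+,1_-] \ltimes_\alpha \Gamma$ and then rewrite every term of the resulting abstract exact sequence using the computations already in hand. First I would verify the hypotheses: by Corollary \ref{groupoid regularity} the groupoid $\G$ is minimal and principal (so in particular effective), it is amenable, its unit space is a Cantor set, and we showed above that it is almost finite. This places $\G$ in exactly the class of groupoids to which [\cite{li2022}, Corollary E] applies, yielding an exact sequence
\[
H_2(\mathsf{D}(\G)) \rightarrow H_2(\G) \xrightarrow{\partial} H_0(\G) \otimes \mathbb{Z}_2 \xrightarrow{j} \mathsf{F}(\G)_{ab} \xrightarrow{I} H_1(\G) \rightarrow 0,
\]
in which $I$ is the index map and $j$ is the transposition (``sign'') map of Matui's AH conjecture.

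Next I would substitute. Lemma \ref{IE as a partial action} gives $\mathsf{F}(\G) = IE(\Gamma)$, hence $\mathsf{D}(\G) = \mathsf{D}(IE(\Gamma))$ and $\mathsf{F}(\G)_{ab} = IE(\Gamma)_{ab}$. Lemma \ref{gpd hom} identifies the groupoid homology with a degree shift of the group homology of $\Gamma$, so $H_0(\G) \cong H_1(\Gamma) \cong \Gamma$, $H_1(\G) \cong H_2(\Gamma)$ and $H_2(\G) \cong H_3(\Gamma)$. Feeding these identifications into the displayed sequence turns it into
\[
H_2(\mathsf{D}(IE(\Gamma))) \rightarrow H_3(\Gamma) \rightarrow \Gamma \otimes \mathbb{Z}_2 \xrightarrow{j} IE(\Gamma)_{ab} \xrightarrow{I} H_2(\Gamma) \rightarrow 0,
\]
which is the claimed long exact sequence.

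It then remains to check that the abstract $I$ and $j$ agree with the explicit maps written before the statement, and in particular that they are well defined. For $j$: Li's map sends $[1_U]_{H_0} \otimes 1$, for a compact open $U \subset \G^{(0)}$, to the class in the abelianisation of the order-two element interchanging $U$ with a disjoint translate; taking $U = s(B)$ for a bisection $B$ with $s(B) \cap r(B) = \emptyset$ this element is exactly the generator $\gamma_B$ of Definition \ref{def symmetric}, and since $\gamma_B$ is an involution the assignment descends to $H_0(\G) \otimes \mathbb{Z}_2 \cong \Gamma \otimes \mathbb{Z}_2$, matching $[1_{s(B)}]_{H_0} \mapsto \gamma_B$. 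For $I$: Li's index map sends a full group element, represented by a bisection $\hat B$ with $s(\hat B) = r(\hat B) = \G^{(0)}$, to the homology class $[1_{\hat B}]_{H_1}$ of its indicator function, which is the stated formula. I expect the only genuine bookkeeping to be in reconciling the concrete $H_0$ and $H_1$ classes used here with the isomorphisms $H_0(\G) \cong \Gamma$ and $H_1(\G) \cong H_2(\Gamma)$ coming from the long exact sequence of Lemma \ref{gpd hom}; this tracking of the boundary maps is the main obstacle, though it introduces no new ideas.
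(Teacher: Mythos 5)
Your proposal is correct and follows essentially the same route as the paper: the paper likewise verifies that $[0_+,1_-]\ltimes_\alpha\Gamma$ is a minimal, almost finite groupoid with Cantor unit space so that [\cite{li2022}, Corollary E] applies, and then substitutes $\mathsf{F}(\G)=IE(\Gamma)$ (Lemma \ref{IE as a partial action}) together with the degree shift $H_*(\G)\cong H_{*+1}(\Gamma)$ (Lemma \ref{gpd hom}) into the abstract AH sequence, with the concrete descriptions of $I$ and $j$ matched against Li's maps exactly as you describe. Your write-up is, if anything, slightly more careful than the paper's, which presents the lemma as an immediate citation-plus-substitution without spelling out the bookkeeping.
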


\begin{theorem}
    \label{ie finitely generated}
Let $\Gamma$ be a dense countable subgroup of $\mathbb{R}$ containing $1$. The following are equivalent:
    \begin{itemize}
        \item $\Gamma$ is finitely generated. 
        \item $D(IE(\Gamma))$ is finitely generated. 
        \item $IE(\Gamma)$ is finitely generated. 
    \end{itemize}
\end{theorem}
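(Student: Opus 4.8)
The plan is to prove the three equivalences via the cycle: $\Gamma$ finitely generated $\iff D(IE(\Gamma))$ finitely generated, then $D(IE(\Gamma))$ finitely generated $\implies IE(\Gamma)$ finitely generated, and finally $IE(\Gamma)$ finitely generated $\implies \Gamma$ finitely generated. The first equivalence I would obtain for free: it is exactly Corollary \ref{finitely generated iff}, which already records that $D(IE(\Gamma))$ is finitely generated precisely when $\Gamma$ is (resting on the expansivity of $\hat\alpha$ from Lemma \ref{expansive action IE} together with Corollary \ref{a finitely generated iff} and the identification $D(IE(\Gamma)) = \mathsf{A}(\Gamma \ltimes_\alpha [0_+,1_-])$). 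Consequently, whenever I know $D(IE(\Gamma))$ is finitely generated I may freely assume $\Gamma \cong \mathbb{Z}^k$ for some $k$, since a finitely generated torsion-free abelian group is free.

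For the implication $D(IE(\Gamma))$ finitely generated $\implies IE(\Gamma)$ finitely generated, the key is to control the abelianisation $IE(\Gamma)_{ab}$ using the AH exact sequence of Lemma \ref{ah conj}. With $\Gamma \cong \mathbb{Z}^k$, the group $\Gamma \otimes \mathbb{Z}_2 \cong \mathbb{Z}_2^k$ is finite and $H_2(\Gamma) \cong H_2(\mathbb{Z}^k) \cong \mathbb{Z}^{\binom{k}{2}}$ is finitely generated. Reading off the tail
$$\Gamma \otimes \mathbb{Z}_2 \xrightarrow{j} IE(\Gamma)_{ab} \xrightarrow{I} H_2(\Gamma) \to 0$$
of the sequence in Lemma \ref{ah conj}, exactness exhibits $IE(\Gamma)_{ab}$ as an extension of the finitely generated group $H_2(\Gamma)$ by the finite group $\operatorname{im}(j)$, hence finitely generated. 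Since $IE(\Gamma)$ sits in the extension
$$1 \to D(IE(\Gamma)) \to IE(\Gamma) \to IE(\Gamma)_{ab} \to 1$$
with both ends finitely generated, and an extension of a finitely generated group by a finitely generated group is finitely generated (adjoin lifts of generators of the quotient to generators of the subgroup), $IE(\Gamma)$ is finitely generated.

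For $IE(\Gamma)$ finitely generated $\implies \Gamma$ finitely generated, I would argue directly with translation lengths. For $f \in IE(\Gamma)$ set $T(f) := \{ft - t : t \in [0,1]\}$, a finite subset of $\Gamma$ by definition of $IE(\Gamma)$. The cocycle identity $(fh)t - t = \bigl(f(ht) - ht\bigr) + (ht - t)$ gives $T(fh) \subseteq T(f) + T(h)$ and $T(f^{-1}) = -T(f)$, so if $S$ generates $IE(\Gamma)$ then $T(f)$ lies in the subgroup generated by $\bigcup_{g \in S} T(g)$ for every $f \in IE(\Gamma)$. On the other hand, the union of all translation sets generates $\Gamma$: given $\gamma \in \Gamma \cap (0,1)$, choose $\varepsilon \in \Gamma \cap \bigl(0, \min(\gamma, 1-\gamma)\bigr)$ (possible by density) and let $f$ swap $[0,\varepsilon)$ and $[\gamma, \gamma+\varepsilon)$ by $\pm\gamma$, identity elsewhere; this lies in $IE(\Gamma)$ and realises $\gamma \in T(f)$. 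Since $1 \in \Gamma$ forces $1-\gamma \in \Gamma \cap (0,1)$, we get $1 = \gamma + (1-\gamma)$ inside the subgroup generated by translation lengths, whence that subgroup contains $\mathbb{Z} + (\Gamma \cap (0,1)) = \Gamma$. Combining, if $S$ is finite then the finite set $\bigcup_{g \in S} T(g)$ generates $\Gamma$.

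The main obstacle is the homological input in the middle implication: one must have the AH exact sequence of Lemma \ref{ah conj} available and know that the relevant homology of a finitely generated $\Gamma \cong \mathbb{Z}^k$ is finitely generated. The extension argument and the translation-length bookkeeping for the last implication (ensuring that finitely many generators produce only finitely many primitive translation values, yet these still generate all of $\Gamma$) are elementary once set up carefully.
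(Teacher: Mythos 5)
Your proof is correct and follows essentially the same route as the paper: the equivalence of finite generation for $\Gamma$ and $D(IE(\Gamma))$ via Corollary \ref{finitely generated iff}, the tail of the AH exact sequence of Lemma \ref{ah conj} together with the extension $1 \to D(IE(\Gamma)) \to IE(\Gamma) \to IE(\Gamma)_{ab} \to 1$ to get finite generation of $IE(\Gamma)$, and the angle (translation-length) bookkeeping for $IE(\Gamma)$ finitely generated $\implies$ $\Gamma$ finitely generated. Your explicit swap construction realising each $\gamma \in \Gamma \cap (0,1)$ as an angle of some interval exchange supplies a detail that the paper's proof leaves implicit when it asserts $\Gamma \subset \mathbb{Z}A$, but the argument is the same.
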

\begin{proof}
We also showed already that $\Gamma$ is finitely generated iff $D(IE(\Gamma))$ is finitely generated (Corollary \ref{a finitely generated iff}). If $\Gamma$ is finitely generated, then $H_2(\Gamma)$ is also finitely generated. By extension, $IE(\Gamma)_{ab}$ is finitely generated  by Theorem \ref{ah conj}.  It follows that $IE(\Gamma)$, the extension of $IE(\Gamma)_{ab}$ by $D(IE(\Gamma))$ is finitely generated whenever $\Gamma$ is finitely generated. 

\ \\
  Now suppose $IE(\Gamma)$ is finitely generated. Let $f_1,...,f_n \in IE(\Gamma)$ be the generators. Let $A$ be the union of all the angles in each $f_i$. We then have that $\mathbb{Z}A \subset \Gamma$ since each $f_i$ in $IE(\Gamma)$, conversely, every $g \in IE(\Gamma)$ is a finite string of $f_i$, hence $\Gamma \subset \mathbb{Z}A$. It follows that $A$ is a finite generating set of $\Gamma$.  
\end{proof}

Notice that we can describe the rational homology of $IE(\Gamma)$ in terms of the rational homology of $\Gamma$ by applying [\cite{li2022}, Corollary C]. 
\begin{lemma} \label{rational homology}
Let $\Gamma$ be a dense additive subgroup of $\mathbb{R}$ containing $1$. For a group $G$, let $$H_{*}^{even}(G)=\begin{cases}
    H_*(G) & *  \text{ even } \ \\
    \{0\} & \text{ else }
\end{cases} \quad H_*^{odd}(G)=\begin{cases}
    H_*(G) & * \text{ odd } \ \\
    \{0\} & \text{ else }
\end{cases} $$ 
and, let 
$$H_{*>2}^{even}(G)=\begin{cases}
    H_*(G) & *>2 \text{ even } \ \\
    \{0\} & \text{ else }
\end{cases} $$ 
Then,
$$H_*(IE(\Gamma), \mathbb{Q})\cong Ext(H_{*+1}^{even}(\Gamma, \mathbb{Q})) \otimes Sym(H_{*+1}^{odd}(\Gamma,\mathbb{Q}))$$
$$H_*(D(IE(\Gamma)), \mathbb{Q})\cong Ext(H_{*+1>2}^{even}(\Gamma, \mathbb{Q})) \otimes Sym(H_{*+1}^{odd}(\Gamma,\mathbb{Q}))$$
Where $Ext,Sym$ denote respectively the Exterior and Symmetric algebras in the sense of Multilinear Algebra \cite{greub1978multilinear}. 
\end{lemma}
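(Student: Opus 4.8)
The plan is to invoke [\cite{li2022}, Corollary C] and then feed in the computation of the groupoid homology from Lemma \ref{gpd hom}. Li's result applies to the topological full group, and to its derived subgroup, of a minimal, almost finite, ample groupoid with Cantor unit space, and identifies the rational homology with a free graded-commutative $\mathbb{Q}$-algebra built out of the rational groupoid homology. All of these hypotheses have already been verified for $\mathcal{G}:=[0_+,1_-]\ltimes_\alpha\Gamma$: it is minimal and principal (Corollary \ref{groupoid regularity}), almost finite (shown above), and has Cantor unit space. So Li's corollary applies directly to $\mathsf{F}(\mathcal{G})=IE(\Gamma)$ and to $\mathsf{D}(\mathcal{G})=D(IE(\Gamma))$.

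First I would recall the standard splitting of a free graded-commutative algebra over a field of characteristic zero: on a graded vector space it is the symmetric algebra on the even-degree part tensored with the exterior algebra on the odd-degree part. Applying this with generators drawn from the groupoid homology $H_n(\mathcal{G},\mathbb{Q})$, Li's formula reads
$$H_*(\mathsf{F}(\mathcal{G}),\mathbb{Q}) \cong Sym\left(H^{even}_*(\mathcal{G},\mathbb{Q})\right) \otimes Ext\left(H^{odd}_*(\mathcal{G},\mathbb{Q})\right),$$
where $H^{even}_*$ and $H^{odd}_*$ denote the contributions from even and odd groupoid degrees.

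Next I would substitute Lemma \ref{gpd hom}. Since $\mathbb{Q}$ is flat over $\mathbb{Z}$, the integral isomorphism $H_n(\mathcal{G})\cong H_{n+1}(\Gamma)$ rationalizes to $H_n(\mathcal{G},\mathbb{Q})\cong H_{n+1}(\Gamma,\mathbb{Q})$. The crucial bookkeeping is that the degree shift by $1$ flips parity: even groupoid degrees correspond to odd homological degrees of $\Gamma$, and odd groupoid degrees to even degrees of $\Gamma$. Hence the symmetric factor, built from even groupoid degrees, becomes $Sym(H^{odd}_{*+1}(\Gamma,\mathbb{Q}))$, while the exterior factor becomes $Ext(H^{even}_{*+1}(\Gamma,\mathbb{Q}))$, giving the stated formula for $H_*(IE(\Gamma),\mathbb{Q})$.

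For the derived subgroup, the same corollary of Li yields the same free graded-commutative algebra except that the degree-one groupoid homology $H_1(\mathcal{G},\mathbb{Q})\cong H_2(\Gamma,\mathbb{Q})$ is omitted from the generators; this is precisely the class detected by the index map $I$ in the AH sequence (Lemma \ref{ah conj}), which $\mathsf{D}(\mathcal{G})$ trivializes. Since $H_1(\mathcal{G})$ sits in odd groupoid degree, it contributes to the exterior factor, and removing it replaces $Ext(H^{even}_{*+1}(\Gamma,\mathbb{Q}))$ by $Ext(H^{even}_{*+1>2}(\Gamma,\mathbb{Q}))$, leaving the symmetric factor untouched. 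I expect the main obstacle to be organizational rather than mathematical: pinning down Li's exact grading and truncation conventions so that the two shifts --- the parity flip coming from Lemma \ref{gpd hom} and the low-degree truncation distinguishing $\mathsf{F}(\mathcal{G})$ from $\mathsf{D}(\mathcal{G})$ --- are matched correctly, and confirming that the claimed isomorphism holds in each homological bidegree.
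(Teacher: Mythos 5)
Your proposal is correct and is essentially the paper's own argument: the paper justifies this lemma exactly as you do, by invoking [\cite{li2022}, Corollary C] for the minimal, almost finite, ample Cantor groupoid $\G_\Gamma=[0_+,1_-]\ltimes_\alpha\Gamma$ and substituting the identification $H_*(\G_\Gamma;\mathbb{Q})\cong H_{*+1}(\Gamma;\mathbb{Q})$ from Lemma \ref{gpd hom}. Your bookkeeping also matches the paper's statement: even groupoid degrees feed the symmetric factor and odd ones the exterior factor, the degree shift flips these to the odd/even homology of $\Gamma$, and passing to $D(IE(\Gamma))$ removes the generators coming from $H_1(\G_\Gamma;\mathbb{Q})\cong H_2(\Gamma;\mathbb{Q})$, which is precisely the truncation $*+1>2$ in the exterior factor.
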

\begin{corollary}
   Let $\Gamma,\Gamma'$ be dense additive subgroups of $\mathbb{R}$ containing 1. Then, $H_*(\Gamma, \mathbb{Q}) \cong H_*(\Gamma, \mathbb{Q}) \implies H_*(IE(\Gamma), \mathbb{Q})\cong H_*(IE(\Gamma), \mathbb{Q})$
\end{corollary}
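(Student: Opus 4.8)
The plan is to deduce the corollary directly from the explicit formula for rational homology established in Lemma \ref{rational homology}, which already expresses $H_*(IE(\Gamma),\mathbb{Q})$ entirely in terms of the graded $\mathbb{Q}$-vector space $H_*(\Gamma,\mathbb{Q})$. Since the assertion is merely an isomorphism of rational homologies, and since the formula
$$H_*(IE(\Gamma), \mathbb{Q})\cong Ext(H_{*+1}^{even}(\Gamma, \mathbb{Q})) \otimes Sym(H_{*+1}^{odd}(\Gamma,\mathbb{Q}))$$
is natural in $\Gamma$, the result should follow formally from functoriality.

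First I would spell out that the hypothesis $H_*(\Gamma,\mathbb{Q}) \cong H_*(\Gamma',\mathbb{Q})$ is to be read as an isomorphism of \emph{graded} $\mathbb{Q}$-vector spaces, i.e.\ a family of isomorphisms $\phi_n: H_n(\Gamma,\mathbb{Q}) \to H_n(\Gamma',\mathbb{Q})$, one in each degree $n$. Such a graded isomorphism automatically restricts to isomorphisms of the even-degree and odd-degree graded subspaces, yielding $H_*^{even}(\Gamma,\mathbb{Q}) \cong H_*^{even}(\Gamma',\mathbb{Q})$ and $H_*^{odd}(\Gamma,\mathbb{Q}) \cong H_*^{odd}(\Gamma',\mathbb{Q})$, both as graded vector spaces, and these survive the common degree shift by one.

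Next I would invoke functoriality of the exterior and symmetric algebra constructions. On the category of graded $\mathbb{Q}$-vector spaces, $Ext(-)$ and $Sym(-)$ are functors carrying graded isomorphisms to graded isomorphisms, and the graded tensor product likewise preserves isomorphisms. Applying these to the isomorphisms of the previous step gives
$$Ext(H_{*+1}^{even}(\Gamma, \mathbb{Q})) \otimes Sym(H_{*+1}^{odd}(\Gamma,\mathbb{Q})) \cong Ext(H_{*+1}^{even}(\Gamma', \mathbb{Q})) \otimes Sym(H_{*+1}^{odd}(\Gamma',\mathbb{Q}))$$
as graded vector spaces. By Lemma \ref{rational homology} the two sides compute $H_*(IE(\Gamma),\mathbb{Q})$ and $H_*(IE(\Gamma'),\mathbb{Q})$ respectively, so the conclusion follows.

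The only point requiring genuine care --- the nearest thing to an obstacle --- is bookkeeping of the grading: one must use the isomorphism of $H_*(\Gamma,\mathbb{Q})$ as a \emph{graded} isomorphism and not merely as an ungraded one, because $Ext$ and $Sym$ depend on the internal degrees (these govern the anticommutativity versus commutativity and hence the dimensions in each total degree of the output). Since group homology is intrinsically graded, this is the natural reading of the hypothesis, and under it the argument is purely formal with no genuine obstruction.
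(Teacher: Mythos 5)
Your proof is correct and is essentially the paper's own argument: the paper states this corollary without proof precisely because it follows formally from Lemma \ref{rational homology}, exactly as you deduce it by pushing the (graded) isomorphism $H_*(\Gamma,\mathbb{Q})\cong H_*(\Gamma',\mathbb{Q})$ through the functorial constructions $Ext$, $Sym$, and the tensor product. Your remark that the hypothesis must be read as an isomorphism of graded vector spaces is the right point of care and matches the intended reading.
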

We can also say the following about integral homological stability by applying [\cite{xinlambda}, Theorem F].
\begin{lemma}
Let $\Gamma$ be a dense countable subgroup of $\mathbb{R}$ containing $1$ and some $x \neq 0$. Then, the isomorphism $\Gamma \rightarrow 1/x \Gamma \quad t \mapsto t/x$ induces an isomorphism in homology $H_*(IE(\Gamma)) \cong H_*(IE(1/x \Gamma))$ \label{homological stability}
\end{lemma}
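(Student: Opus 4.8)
The plan is to convert the scaling of $\mathbb{R}$ into a groupoid isomorphism and then reduce the remaining comparison to an application of [\cite{xinlambda}, Theorem F]. Since $\Gamma = -\Gamma$ as a group, we have $(1/x)\Gamma = (1/|x|)\Gamma$ as subsets of $\mathbb{R}$, so $IE(1/x\,\Gamma) = IE(1/|x|\,\Gamma)$ and we may assume $x>0$ throughout. Because $1,x \in \Gamma$, the subgroup $\Lambda := (1/x)\Gamma$ is again dense and countable and contains both $1 = (1/x)\cdot x$ and $\ell := 1/x$, so $IE(\Lambda)$ and the restricted partial action of $\Lambda$ on $[0_+,\ell_-]$ are both defined.

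First I would promote the group isomorphism $\phi_x \colon \Gamma \to \Lambda$, $t \mapsto t/x$, to a groupoid isomorphism. The orientation-preserving bijection $t \mapsto t/x$ of $\mathbb{R}$ lifts canonically to a homeomorphism $\mathbb{R}_\Gamma \to \mathbb{R}_\Lambda$ sending $a_\pm \mapsto (a/x)_\pm$, and this intertwines the additive action $\alpha$ of $\Gamma$ with the additive action $\alpha$ of $\Lambda$ through $\phi_x$. Consequently there is an isomorphism of transformation groupoids $\Gamma \ltimes \mathbb{R}_\Gamma \cong \Lambda \ltimes \mathbb{R}_\Lambda$ carrying the clopen set $[0_+,1_-]$ onto $[0_+,\ell_-]$. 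Restricting, this gives a genuine group isomorphism
$$ IE(\Gamma) = \mathsf{F}(\Gamma \ltimes_\alpha [0_+,1_-]) \cong \mathsf{F}(\Lambda \ltimes_\alpha [0_+,\ell_-]), $$
and hence an isomorphism of their integral homologies.

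It then remains to compare $\mathsf{F}(\Lambda \ltimes_\alpha [0_+,\ell_-])$ with $IE(\Lambda) = \mathsf{F}(\Lambda \ltimes_\alpha [0_+,1_-])$. Both $[0_+,\ell_-]$ and $[0_+,1_-]$ are compact open subsets of $\mathbb{R}_\Lambda$ that are full for $\Lambda \ltimes \mathbb{R}_\Lambda$, by density of $\Lambda$ exactly as recorded in the remark following Lemma \ref{IE as a partial action}. Thus the two restricted groupoids are Kakutani-equivalent restrictions of one and the same minimal, principal, almost finite groupoid (its regularity being that of Corollary \ref{groupoid regularity} together with the almost finiteness established thereafter). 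This is precisely the situation governed by [\cite{xinlambda}, Theorem F], which supplies integral homological stability: the integral homology of the topological full group is unchanged under passing between restrictions to full compact open subsets. Applying it yields $H_*(\mathsf{F}(\Lambda \ltimes_\alpha [0_+,\ell_-])) \cong H_*(IE(\Lambda))$, and combining with the isomorphism of the previous paragraph gives $H_*(IE(\Gamma)) \cong H_*(IE(1/x\,\Gamma))$.

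The main obstacle is verifying that the hypotheses of [\cite{xinlambda}, Theorem F] are genuinely met and that it delivers an \emph{integral} rather than merely rational isomorphism; this rests on the minimality and almost finiteness of the ambient groupoid and on the fact that both intervals are honestly full subsets of $\Lambda \ltimes \mathbb{R}_\Lambda$. A secondary technical point is ensuring that the canonical lift of $t \mapsto t/x$ to the doubled spaces $\mathbb{R}_\Gamma, \mathbb{R}_\Lambda$ is a homeomorphism compatible with the $\pm$ decorations and with $\alpha$; this is routine from the order-topology description but should be checked, and it is the step that breaks if one forgets to reduce to $x>0$ first.
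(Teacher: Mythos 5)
Your proposal is correct and takes essentially the same route as the paper: conjugate by the homothety $t \mapsto t/x$ to identify $IE(\Gamma)$ with the topological full group of the restriction of the $(1/x)\Gamma$-groupoid to a full clopen interval, and then invoke [\cite{xinlambda}, Theorem F] for the homological stability step. The only cosmetic difference is that the paper assumes wlog $x>1$ and restricts inside the compact groupoid $(1/x)\Gamma \ltimes_\alpha [0_+,1_-]$ to the clopen subset $[0_+,(1/x)_-]$, whereas you compare the two full compact open subsets $[0_+,(1/x)_-]$ and $[0_+,1_-]$ of the ambient groupoid over $\mathbb{R}_{(1/x)\Gamma}$, which amounts to the same application of the theorem.
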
\begin{proof}
    Assume wlog that $x>1$. Then, it is clear that $IE(\Gamma)$ is isomorphic to the subgroup $IE(1/x \Gamma)_{[0,1/x]}=\{f \in IE(1/x \Gamma) \; : \; t>1/x \implies f(t)=t \}$ by conjugating via the homothety $t \mapsto t/x$. Also, $IE(1/x \Gamma)_{[0,1/x]}=\mathsf{F}(1/x \Gamma \ltimes_\alpha [0_+,1_-]|_{[0_+,1/x_-]}^{[0_+,1/x_-]})$. Therefore, the result follows via [\cite{xinlambda}, Theorem  F]. 
\end{proof}

\section{Concrete Examples of Interval Exchange Groups}
\subsection{Rational $\Gamma$ and UHF Groupoids}
\label{subs rational}
An easy case for computing an explicit generating set is when $\Gamma \subset \mathbb{Q}$, 
by identifying $IE(\Gamma)$ with a certain increasing sequence of symmetric groups. 
\begin{lemma}
Let $\Gamma \subset \mathbb{Q}$ be dense with $1 \in \Gamma$ . Then $\Gamma$ is generated by a strictly decreasing sequence of rational numbers $\{1/k_i\}_{i=1}^{\infty} \subset \mathbb{Q}$. Moreover, set $k(n)=\prod_{i=1}^n k_i$. Then, $IE(\Gamma) \cong \lim_{n \rightarrow \infty} S_{k(n)}$. The connecting map is given by (letting $\sigma_{i,j}^n$ be the element of $S_{k(n)}$ permuting the $i$ and $j$th element):
$$\iota_n: S_{k(n)} \hookrightarrow S_{k(n+1)} \quad  \sigma_{i,i+1}^n \mapsto \prod_{k=1}^{k_n-1} \sigma_{i k_n +k ,(i+1) k_n+k}^{n+1}$$ 
An explicit generating set is given by:
$$\{ \sigma_{i,i+1}^n \; : 0<i< k(n), n \in \mathbb{N} \} $$
Subject to the (infinitely many) relations:
\begin{itemize}
    \item $ \sigma_{i,i+1}^n=\prod_{k=1}^{k_n-1} \sigma_{i k_n +k ,(i+1) k_n+k}^{n+1}, \; 1 \leq i<k(n) $
    \item $(\sigma_{i,i+1}^n)^2=1 , \; 1 \leq i<k(n)$
    \item $1=[\sigma_{i,i+1},\sigma_{j,j+1}] \;  1\leq i <i+2 \leq j <k(n)$
    \item $\sigma_{i,i+1}^n \sigma_{i+1,i+2}^n \sigma_{i,i+1}^n =\sigma_{i+1,i+2}^n \sigma_{i,i+1}^n \sigma_{i+1,i+2}^n, , \; 1 \leq i<k(n)-1  $
\end{itemize} In terms of piecewise linear bijections, one can take $\sigma_{i,j}^n(t)=\begin{cases}  t+ \frac{j-i}{k(n)} & t \in (\frac{i}{k(n)}, \frac{i+1}{k(n)}] \ \\
 t+ \frac{i-j}{k(n)} & t \in (\frac{j}{k(n)}, \frac{j+1}{k(n)}]  \ \\
 t & \text{else}

 \end{cases}$
 \label{ generating set when rational }
\end{lemma}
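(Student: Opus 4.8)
The plan is to deduce every assertion from the UHF identification already in hand: Corollary~\ref{id with uhf gpd}, the computation of topological full groups of UHF groupoids in and after Example~\ref{uhf groupoids}, and Lemma~\ref{IE as a partial action}. First I would produce the generating sequence purely group-theoretically. Since $\Gamma\subset\mathbb{Q}$ is countable and contains $1$, enumerate $\Gamma=\{\gamma_1,\gamma_2,\dots\}$ and set $\Gamma_n=\langle 1,\gamma_1,\dots,\gamma_n\rangle$. Each $\Gamma_n$ is a finitely generated subgroup of $\mathbb{Q}$, hence infinite cyclic, and since it contains $1$ it equals $\tfrac{1}{k(n)}\mathbb{Z}$ for a unique positive integer $k(n)$. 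Passing to a subsequence I may take the $\Gamma_n$ strictly increasing, so $k(n)\mid k(n+1)$ strictly and $\Gamma=\bigcup_n\tfrac{1}{k(n)}\mathbb{Z}$. Setting $k_i=k(i)/k(i-1)$ (with $k(0)=1$) gives $k(n)=\prod_{i=1}^n k_i$ and exhibits $\Gamma$ as generated by the strictly decreasing sequence $1/k(1)>1/k(2)>\cdots$.

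Next I would assemble the isomorphism $IE(\Gamma)\cong\lim_n S_{k(n)}$. By Corollary~\ref{id with uhf gpd} the groupoid $\Gamma\ltimes_\alpha[0_+,1_-]$ is conjugate to the UHF groupoid of the Bratteli diagram with partial products $k(n)$; Lemma~\ref{IE as a partial action} identifies $IE(\Gamma)$ with $\mathsf{F}(\Gamma\ltimes_\alpha[0_+,1_-])$; and the computation of the topological full group of a UHF groupoid (the example immediately following Example~\ref{uhf groupoids}) gives $\mathsf{F}=\bigcup_n S_{k(n)}=\lim_n S_{k(n)}$. Composing these isomorphisms yields the claim, and the connecting homomorphism $\iota_n$ is read directly off that example: it is the map induced by the Bratteli inclusion $\mathcal{R}_{k(n)}\hookrightarrow\mathcal{R}_{k(n+1)}$, sending an adjacent transposition to the product of transpositions over the copies into which its two points are refined.

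For the presentation I would argue as follows. The adjacent transpositions generate each $S_{k(n)}$, so their union generates the limit. Each $S_{k(n)}$ is a Coxeter group with the standard presentation—exactly the involution, far-commutation and braid relations listed. Since every $\iota_n$ is injective, the direct limit is the increasing union $\bigcup_n S_{k(n)}$, and the colimit of the presentations $\langle X_n\mid R_n\rangle$ along the $\iota_n$ is presented by $\langle\,\bigsqcup_n X_n\mid\bigcup_n R_n,\ \{x=\iota_n(x)\}\,\rangle$; this is precisely the four displayed families of relations, the first (definition) family encoding $\sigma^n=\iota_n(\sigma^n)$ and the remaining three being the Coxeter relations at each level. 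Finally, the piecewise-linear formula is verified by a direct computation: under the isomorphism $\varphi$ of Lemma~\ref{IE as a partial action} together with the conjugacy of Corollary~\ref{id with uhf gpd}, the $k(n)$ unit points of $\mathcal{R}_{k(n)}$ correspond to the clopen intervals $(\tfrac{i}{k(n)},\tfrac{i+1}{k(n)}]$, and the transposition $\sigma^n_{i,j}$ becomes the interval exchange translating one such interval onto the other, which is the stated map.

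The only step requiring genuine care is index bookkeeping: matching the labels in $\iota_n$ with the blow-up of points in the Bratteli diagram, tracking which $k(n+1)/k(n)$ refined copies the adjacent transposition acts on, and confirming that the four relation families exhaust the relations of the direct limit. Everything else is a routine assembly of results already established, so I expect no substantive obstacle beyond this combinatorial matching.
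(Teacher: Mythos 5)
Your route to the abstract isomorphism is sound and genuinely different from the paper's. The paper proves this lemma directly and dynamically: it writes $\Gamma=\bigcup_n \Gamma_n$ with $\Gamma_n=\tfrac{1}{k(n)}\mathbb{Z}$, identifies $IE(\Gamma)$ with the topological full group of $\Gamma/\mathbb{Z}\ltimes[0_+,1_-]$, and observes that this full group is the increasing union of the full groups over the $\Gamma_n$, each realized concretely as the permutations of the $k(n)$ intervals of width $1/k(n)$. You instead import Corollary \ref{id with uhf gpd} (whose proof in the paper goes through Elliott classification, Raad's uniqueness of AF Cartan subalgebras, and Renault reconstruction) and read off the topological full group of the UHF groupoid from Example \ref{uhf groupoids}. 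Within the paper's logical order this is not circular: the corollary is established before this lemma, and the paper itself remarks that the lemma yields an \emph{alternative} proof of that corollary (so your derivation reverses the paper's economy, trading an elementary argument for heavy machinery, but is legitimate). Your treatment of the presentation, via the Coxeter presentation of each $S_{k(n)}$ and the standard presentation of a direct limit along injective connecting maps, is in fact more complete than the paper's proof, which never addresses the relations.

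The genuine gap is in your final step. The conjugacy of Corollary \ref{id with uhf gpd} is a pure existence statement: it is manufactured from classification of C$^*$-algebras plus Renault's reconstruction theorem, so it is in no way explicit, and it is far from unique, since it may be composed with any automorphism of the UHF groupoid (for instance one induced by an element of its own topological full group), which moves cylinder sets to arbitrary clopen sets of the same measure. Hence there is no ``direct computation'' showing that the $k(n)$ unit points of $\mathcal{R}_{k(n)}$ correspond to the intervals $(\tfrac{i}{k(n)},\tfrac{i+1}{k(n)}]$; trace preservation constrains only measures, not the sets themselves. What your argument actually proves is that $IE(\Gamma)$ is abstractly isomorphic to $\varinjlim S_{k(n)}$; it does not prove that the concrete piecewise-linear maps $\sigma^n_{i,j}$ of the statement generate $IE(\Gamma)$ — a priori they generate a subgroup isomorphic to the limit which could be proper. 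To close this you need the paper's direct exhaustion argument (which, once made, renders the UHF detour unnecessary): by Lemma \ref{IE as a partial action}, every element of $IE(\Gamma)$ is a piecewise translation whose finitely many angles and breakpoints all lie in $\Gamma$, hence in $\tfrac{1}{k(m)}\mathbb{Z}$ for some $m$; such an element permutes the $k(m)$ intervals of width $1/k(m)$ and is therefore a product of the concrete $\sigma^m_{i,j}$. With that paragraph added your proof is complete; without it, the concrete generating set and the piecewise-linear formula remain unproved.
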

\begin{proof}
 If $\Gamma \subset \mathbb{Q}$, it is clear that $\Gamma$ is generated by the (strictly decreasing) sequence of rational numbers $\{1/k_i\}_{i=1}^{\infty}$. Let $\Gamma_n=1/(k(n))\mathbb{Z}$. It is clear that $\Gamma=\bigcup \Gamma_n$. Then note we have:
 $$IE(\Gamma)=[[\Gamma/\mathbb{Z} \ltimes U_{0,1}]]=\bigcup_{n \in \mathbb{N}} [[ \Gamma_n /\mathbb{Z} \ltimes U_{0,1}]] \cong \bigcup_{n \in \mathbb{N}} S_{k(n)}$$
 And that the generators of $[[ \Gamma_n /\mathbb{Z} \ltimes U_{0,1} ]]$ are $\sigma_{i,j}^n$
\end{proof}

Implicitly in the above proof we also realize independently $IE(\Gamma)$ for $\Gamma \subset \mathbb{Q}$ cannot be finitely generated. We also verify that $IE(\Gamma)$ is amenable in this case. Via Theorem \ref{matui isomorphism theorem}, we also obtain an alternative proof of Corollary \ref{identification with UHF}. \ \\

This Corollary establishes that $[0_+,1_-] \ltimes_{\alpha} \Gamma$ in this case is the unique AF groupoid associated with the following Bratelli diagram:
$$ 1 \xrightarrow{k_1} k_1 \xrightarrow{k_2} k(2) \xrightarrow{k_3} k(3) \xrightarrow{k_4} ... \xrightarrow{k_n} k(n) \xrightarrow{k_{n+1}} ... $$
With each $k_i>1$. The supernatural number associated with this AF algebra is always infinite and given by $\prod_{i=1}^\infty k_i=\lim_{n \rightarrow \infty} k(n)$. 

\ \\If $\Gamma$ is generated by a strictly decreasing sequence of rational numbers $\{1/k_i\}_{i=1}^\infty$, then $\Gamma$ is the direct limit of the sequence: 
    $ \mathbb{Z} \xrightarrow{k_1} \mathbb{Z}  \xrightarrow{k_2} \mathbb{Z}  \xrightarrow{k_3} ...$. From this one can observe $H_0(\Gamma)=\Gamma$ and $H_*(\Gamma)=0$ for all $* \geq 1$.  Therefore, via Lemma \ref{gpd hom}
    we obtain that 
    $$ H_*(\Gamma \ltimes [0_+,1_-]) \cong H_{*+1}(\Gamma)=\begin{cases}
        \Gamma & *=0 \ \\
        0 & \text{else} 
    \end{cases}$$
 We have then that Lemma \ref{ah conj} collapses to an isomorphism $$IE(\Gamma)_{ab} \cong \Gamma \otimes \mathbb{Z}_2 \cong \mathbb{Z}_2.$$
 This demonstrates that $ IE(\Gamma) \cong \mathsf{S}( \Gamma \ltimes [0_-,1_+])$ since the map $j$ in Lemma \ref{ah conj} is the zero map. This can also be verified via the generating set in Lemma \ref{ generating set when rational }, since it is clear that each $\sigma_{i,j}^n \in  \mathsf{S}( \Gamma \ltimes [0_-,1_+])$. There is also only one nontrivial quotient homomorphism, given by the sign homomorphism:
 $$sgn:  IE(\Gamma) \rightarrow (\mathbb{Z}_2,+) \quad \sigma_{i,j}^n \mapsto 1 $$
As with the classical symmetric group on $n$-generators. Let us also apply Lemma \ref{rational homology}. We have that $D(IE(\Gamma))$ is rationally acyclic immediately. Noting then $IE(\Gamma)$ is the extension of a rationally acyclic group by a rationally acyclic group:
$$0 \rightarrow D(IE(\Gamma)) \xrightarrow{\iota} IE(\Gamma) \xrightarrow{sgn} \mathbb{Z}_2 \rightarrow 0 $$
$IE(\Gamma)$ is also rationally acyclic.
\begin{example}[Measure Preserving $V$ and the CAR algebra]
    Let $n \in \mathbb{N}$, $n \geq 2$ and $\Gamma=\mathbb{Z}[1/n] \subset \mathbb{Q}$ be the ring generated by $1/n$. Then, $IE(\mathbb{Z}[1/n])$ is canonically isomorphic to the group of the measure-preserving subgroup of the Higman-Thompson group $V_{n,1}$. The associated Bratelli diagram is given by:
    $$ 1 \xrightarrow{n} n  \xrightarrow{n} n^2  \xrightarrow{n} n^3  \xrightarrow{n} ... $$
   And the corresponding UHF algebra is $\bigcup_{k=1}^\infty M_{n^k}(\mathbb{C})$, where the inclusion maps are given by block diagonal inclusions: $$ A_{n^k} \mapsto \begin{pmatrix}
       A_{n^k}^{(1)} & 0_{n^k} & \dots & 0_{n^k} \ \\ 
       0_{n^k} & A_{n^k}^{(2)} & \dots & 0_{n^k} \ \\
       \vdots & \vdots & \ddots & \vdots  \ \\
       0_{n^k} & 0_{n^k} & \dots & A_{n^k}^{(n)}
   \end{pmatrix}$$
   
   Moreover, if $n=2$, the groupoid $\mathbb{Z}[1/2] \ltimes [0_+,1_-]$ is isomorphic to the standard groupoid model of the CAR algebra with supernatural number $2^\infty$. 
\end{example}
\begin{example}[$\mathbb{Q}$ and the universal UHF algebra $\mathcal{Q}$] 
 A final example to highlight is that of $\mathbb{Q}$ and the universal UHF algebra $\mathcal{Q}$. Here, we are discussing the groupoid associated with the following Bratelli diagram:
 $$1 \xrightarrow{2} 2 \xrightarrow{3} 6 \xrightarrow{4} ... \xrightarrow{n} n! \xrightarrow{n+1} ... $$
 This groupoid is the standard model of the universal UHF algebra $\mathcal{Q}$ and $IE(\mathbb{Q})=\bigcup_{n \in \mathbb{N}} S_{n!}$. 
\end{example}

\subsection{$\Gamma$ Polycyclic}
\label{subs polycyclic}
Now let us suppose that we are in the case where $\Gamma$ is polycyclic and generated by irrational numbers that is, let $\Gamma=\mathbb{Z} \oplus \bigoplus_{i=1}^d \lambda_i \mathbb{Z}$ where $\lambda_i$ are $\mathbb{Q}$-independent numbers and $d \geq 1$.  As an abstract group $\Gamma \cong \mathbb{Z}^{d+1}$. Therefore, we have that:
$$ H_*(\Gamma \ltimes [0_+,1_-]) \cong H_{*+1}(\mathbb{Z}^{d+1}) \cong \mathbb{Z}^{ d+1 \choose *+1 }  $$
For example, if $d=1$ we have $H_0 (\mathbb{Z} \oplus \lambda_1 \mathbb{Z} \ltimes [0_+,1_-]) \cong \mathbb{Z}^2$, $H_1(\mathbb{Z} \oplus \lambda_1 \mathbb{Z} \ltimes [0_+,1_-] )\cong \mathbb{Z}$. otherwise the homology vanishes. Plugging this into Lemma \ref{ah conj} we obtain the short exact sequence:
$$0 \rightarrow \mathbb{Z}^2 \otimes \mathbb{Z}_2=\mathbb{Z}_2^2 \xrightarrow{j} IE((\mathbb{Z} \oplus \lambda_1 \mathbb{Z})_{ab} \xrightarrow{I} \mathbb{Z} \rightarrow 0 $$
It follows that $IE(\mathbb{Z} \oplus \lambda_1 \mathbb{Z})_{ab} \cong \mathbb{Z} \oplus \mathbb{Z}_2^2$. 
\ \\ \ \\
Rationally then, $H_1(IE((\mathbb{Z} \oplus \lambda_1 \mathbb{Z}), \mathbb{Q})\cong H_0(IE((\mathbb{Z} \oplus \lambda_1 \mathbb{Z}),\mathbb{Q}) \cong \mathbb{Q} $, and for $* \geq 1, H_*(IE((\mathbb{Z} \oplus \lambda_1 \mathbb{Z}), \mathbb{Q})=0$ by Lemma \ref{rational homology}. In other words, when $d=1$, we can see that $D(IE((\mathbb{Z} \oplus \lambda_1 \mathbb{Z}))$ is rationally acyclic through Lemma \ref{rational homology}. \ \\ \ \\
This is contrast to the case $d>1$. If $d>1$ one has that $H_{d}(\mathbb{Z} \oplus \bigoplus_{i=1}^d  \lambda_i \mathbb{Z}\ltimes [0_-,1_+]) \cong \mathbb{Z}$ and for all $*>d, H_{d}(\mathbb{Z} \oplus \bigoplus_{i=1}^d  \lambda_i \mathbb{Z} \ltimes [0_-,1_+]) \cong 0$. This implies  $ H_{d}(D(IE(\mathbb{Z} \oplus \bigoplus_{i=1}^d  \lambda_i \mathbb{Z}),\mathbb{Q}) \neq 0 $ by Lemma \ref{rational homology}.
\ \\ \ \\
Suppose $d=2$. Then $H_2(\mathbb{Z} \oplus \lambda_1\mathbb{Z} \oplus \lambda_2 \mathbb{Z} \ltimes [0_+, 1_-]) \cong \mathbb{Z}$ $H_0(\mathbb{Z} \oplus \lambda_1\mathbb{Z} \oplus \lambda_2 \mathbb{Z} \ltimes [0_+, 1_-]) \cong H_1(\mathbb{Z} \oplus \lambda_1\mathbb{Z} \oplus \lambda_2 \mathbb{Z} \ltimes [0_+, 1_-]) \cong \mathbb{Z}^3$ and all other homology terms vanish. Via Lemma \ref{ah conj} we obtain a short exact sequence: 
$$  0 \rightarrow \mathbb{Z}_2^3 \xrightarrow{j} IE(\mathbb{Z} \oplus \lambda_1\mathbb{Z} \oplus \lambda_2 \mathbb{Z})_{ab} \xrightarrow{I} \mathbb{Z} \rightarrow 0$$
We again have that this short exact sequence splits and that $IE(\mathbb{Z} \oplus \lambda_1\mathbb{Z} \oplus \lambda_2 \mathbb{Z})_{ab} \cong \mathbb{Z} \oplus \mathbb{Z}_2^3$. 
\ \\ \ \\
If $d > 2$, $H_3(\mathbb{Z} \oplus \bigoplus_{i=1}^d  \lambda_i \mathbb{Z} ) \neq 0$, and so we do not get a short exact sequence in Lemma \ref{ah conj}. However, we may still obtain some information from the long exact sequence, which shall look like:
$$ H_2(D(IE(\mathbb{Z} \oplus \bigoplus_{i=1}^d  \lambda_i \mathbb{Z})) \rightarrow \mathbb{Z}^{d+1 \choose 4} \rightarrow \mathbb{Z}^{d+1} \otimes \mathbb{Z}_2 \xrightarrow{j} IE(\mathbb{Z} \oplus \bigoplus_{i=1}^d  \lambda_i \mathbb{Z})_{ab} \xrightarrow{I} \mathbb{Z}^{d+1 \choose 3} \rightarrow 0$$
We need a more concrete description of $I,j$. We have established already that $\mathsf{D}(IE(\Gamma)) \cong \mathsf{A}(\Gamma \ltimes [0_+ , 1_-])$ (Corollary \ref{derived ie is simple}). Then, \cite{li2022}, Corollary 6.17 identifies $Ker(I)=Im(j)$ with $\mathsf{S}(\Gamma \ltimes [0_+ , 1_-])$. This can also be verified explicitly, if $\gamma_B$ is a generator of $\mathsf{S}(\Gamma \ltimes [0_+ , 1_-])$ then, $j([1_{s(B)}])=\gamma_B$. This demonstrates that $\mathsf{S}(\Gamma \ltimes [0_+ , 1_-])$ is at most $d+1$ generated.
\ \\ \ \\
Note that we have only considered the case where $\Gamma/\mathbb{Z}$ is torsion free and $\Gamma$ is polycyclic. The reason this is enough is that for any $\Gamma $ polycyclic with $\Gamma/\mathbb{Z}$ having torsion, there exists some $k \in \mathbb{N}$ such that $k\Gamma/\mathbb{Z}$ is torsion free. Applying Lemma \ref{homological stability} thus reduces the torsion free to the torsion case. 
\subsection{$\Gamma$ a ring, $\Gamma=\mathbb{Z}[\lambda,\lambda^{-1}]$ }
\label{subs gamma a ring}
Let $\lambda \in \mathbb{R}$ be algebraic with degree $d$. Consider the ring generated by $\lambda$, i.e. $\Gamma=\mathbb{Z}[\lambda,\lambda^{-1}]$. In this case, we have that $IE(\mathbb{Z}[\lambda,\lambda^{-1}])$ is canonically isomorphic to the measure-preserving subgroup of the irrational slope Thompson group $V_{\lambda}$ with slope $\langle \lambda \rangle$ and breakpoints $\mathbb{Z}[\lambda,\lambda^{-1}]$ as studied in [\cite{stein1992groups}, \cite{cleary2000regular}, \cite{irrationalslope22}]. \ \\
It is notable that via the embedding $\mathbb{Z} \oplus \lambda \mathbb{Z} \hookrightarrow \mathbb{Z}[\lambda,\lambda^{-1}]$ we obtain an embedding of $IE(\mathbb{Z} \oplus \lambda \mathbb{Z})$, a simple finitely generated amenable group, into $V_\lambda$ for any irrational $\lambda$, a behavior not exhibited in the rational slope Thompson groups. Let us describe a concrete generating set for $D(IE(\mathbb{Z}[\lambda,\lambda^{-1}]))$. \ \\
\begin{lemma}
    Let $\lambda$ be irrational and $\Gamma=\mathbb{Z}[\lambda,\lambda^{-1}]$. Then there exists a countable, generating set $\{1, \lambda_i\}_{i=1}^n$ for $\Gamma$ such that $4/5<\lambda_i<\lambda_{i+1}<1/2$ for all $i$. Then, let $\sigma_i, \hat{\sigma_i}$ be as in Theorem \ref{generating set theorem}. $IE(\mathbb{Z}[\lambda,\lambda^{-1}])$ is generated by $S=\{\sigma_i ,\hat{\sigma}_i\}_{i \in \mathbb{N}}$. \label{generating set ring}
\end{lemma}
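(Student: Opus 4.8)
The plan is to establish $\langle S\rangle=IE(\Gamma)$ through the standard criterion that a subgroup $H\le G$ equals $G$ as soon as $H\supseteq D(IE(\Gamma))$ and the image of $H$ in the abelianisation $IE(\Gamma)_{ab}$ is everything; both halves I would route through a colimit over finitely generated subgroups. First I fix an enumeration of a generating set of $\Gamma=\mathbb{Z}[\lambda,\lambda^{-1}]$ and, exactly as in Lemma \ref{prescribed generating set}, exploit the density of each partial sum $\mathbb{Z}\oplus\bigoplus_{j\le i}\lambda_j\mathbb{Z}$ in $\mathbb{R}$ to replace each new generator by a representative of its class modulo $\mathbb{Z}$ lying in the window $2/5<\lambda_i<\lambda_{i+1}<1/2$ (the printed bound $4/5$ is a typo for $2/5$, matching the hypothesis of Theorem \ref{generating set theorem}). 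Setting $\Gamma_n=\mathbb{Z}\oplus\bigoplus_{i=1}^n\lambda_i\mathbb{Z}$, chosen so that each $\Gamma_n/\mathbb{Z}\cong\mathbb{Z}^{d_n}$ is torsion free, produces a nested chain with $\bigcup_n\Gamma_n=\Gamma$; any torsion appearing in some $\Gamma_n/\mathbb{Z}$ is absorbed by appending the generators $r_{k,a}$ of Theorem \ref{generating set k}.

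For the containment $\langle S\rangle\supseteq D(IE(\Gamma))$ I would note that every $f\in IE(\Gamma)$ has finitely many angles, all lying in some $\Gamma_n$, so $IE(\Gamma)=\bigcup_n IE(\Gamma_n)$, and the same observation applied to commutators gives $D(IE(\Gamma))=\bigcup_n D(IE(\Gamma_n))$ as a directed union. Keeping $\lambda=\lambda_1$ fixed in the subshift model of Lemma \ref{IE as a subshift}, Theorem \ref{generating set theorem} supplies at stage $n$ the elements $\sigma_1,\hat\sigma_1,\dots,\sigma_n,\hat\sigma_n$ generating $D(IE(\Gamma_n))$, and Corollary \ref{nested generating sets} guarantees these finite generating sets are compatible under the inclusions $\Gamma_n\subset\Gamma_{n+1}$, so that they stack to the single countable set $S$. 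Passing to the union then yields $\langle S\rangle\supseteq\bigcup_n D(IE(\Gamma_n))=D(IE(\Gamma))$, which is the first half of the criterion.

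The decisive and hardest step is the second: showing that the elements of $S$ generate $IE(\Gamma)_{ab}$, which is exactly what promotes the containment above to the equality $\langle S\rangle=IE(\Gamma)$. The subtlety is that the $\sigma_i,\hat\sigma_i$ are order-three patch transformations, so their classes in the abelianisation must be computed rather than taken for granted; this is the main obstacle. I would analyse it through the AH exact sequence of Lemma \ref{ah conj} for $\Gamma=\mathbb{Z}[\lambda,\lambda^{-1}]$, writing $IE(\Gamma)_{ab}=\varinjlim_n IE(\Gamma_n)_{ab}$ and computing the class of each generator under the map $j$, which detects the $\Gamma\otimes\mathbb{Z}_2$ contribution, and under the surjection $I\colon IE(\Gamma)_{ab}\to H_2(\Gamma)=\Lambda^2\Gamma$. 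Since $\Lambda^2\Gamma\neq 0$ whenever $\mathrm{rank}\,\Gamma\ge 2$, surjecting onto $IE(\Gamma)_{ab}$ is a genuine condition rather than a formality, and the heart of the argument is to match the abelianised classes of $\sigma_i,\hat\sigma_i$ against those of the symmetric generators $\gamma_B\in\mathsf{S}(\Gamma\ltimes_\alpha[0_+,1_-])$ that realise $IE(\Gamma)_{ab}$ through $j$ and $I$. Carrying out this comparison coherently in the colimit, rather than the generation of the derived subgroup, is where I expect essentially all of the difficulty to concentrate.
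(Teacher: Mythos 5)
Your first two paragraphs reproduce, in more careful language, exactly the paper's argument: write $\Gamma_n=\bigoplus_{i=-n}^{n}\lambda^i\mathbb{Z}$, apply Lemma \ref{prescribed generating set} and Theorem \ref{generating set theorem} to each $\Gamma_n$, and pass to the directed union $D(IE(\Gamma))=\bigcup_n D(IE(\Gamma_n))$ (you are also right that $4/5$ is a typo for $2/5$). The fatal problem is your third paragraph, which tries to upgrade $\langle S\rangle\supseteq D(IE(\Gamma))$ to $\langle S\rangle=IE(\Gamma)$ by showing that the classes of $\sigma_i,\hat\sigma_i$ generate $IE(\Gamma)_{ab}$. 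This step is not merely difficult; it is impossible. Each $\sigma_i,\hat\sigma_i$ is a patch transformation $T_\pi$, i.e.\ a $3$-cycle of pairwise disjoint clopen sets, hence of the form $\gamma_{B_1,B_2}=[\gamma_{B_1},\gamma_{B_2}]$, and therefore lies in $\mathsf{A}(\G)\subseteq\mathsf{D}(\G)=D(IE(\Gamma))$ (Corollary \ref{derived ie is simple}). So every element of $S$ has trivial image in $IE(\Gamma)_{ab}$: in your notation, $I$ kills each generator and there are no nonzero classes to match against $j(\Gamma\otimes\mathbb{Z}_2)$. Since $\Gamma=\mathbb{Z}[\lambda,\lambda^{-1}]$ with $\lambda$ irrational has rational rank at least $2$, the surjection $I\colon IE(\Gamma)_{ab}\to H_2(\Gamma)=\Lambda^2\Gamma$ of Lemma \ref{ah conj} has nonzero target ($1\wedge\lambda\neq 0$ already rationally), so $IE(\Gamma)_{ab}\neq 0$ and $\langle S\rangle\subseteq D(IE(\Gamma))\subsetneq IE(\Gamma)$. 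No colimit computation can close this gap: the set $S$ simply does not generate $IE(\Gamma)$.

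The correct resolution is that the lemma's statement contains a typo: ``$IE(\mathbb{Z}[\lambda,\lambda^{-1}])$ is generated by $S$'' should read ``$D(IE(\mathbb{Z}[\lambda,\lambda^{-1}]))$ is generated by $S$'', as announced in the sentence immediately preceding the lemma and as the paper's own proof establishes — that proof is precisely your first two paragraphs, stopping at $D(IE(\Gamma))=\bigcup_{n}D(IE(\Gamma_n))=\langle S\rangle$. Your instinct that the abelianisation step is ``where essentially all of the difficulty concentrates'' was the right alarm bell; the conclusion to draw from it is that the literal statement is false, not that a subtler comparison of classes is needed. One further caution, which the paper also glosses over: for algebraic $\lambda$ the groups $\Gamma_n/\mathbb{Z}$ can have torsion and $\Gamma_n$ can fail to be free of rank $2n+1$ (e.g.\ $\lambda=\sqrt{2}$ gives $\lambda^{-2}=1/2\in\Gamma_1$), so Theorem \ref{generating set theorem} does not apply verbatim to every $\Gamma_n$; your suggestion to absorb the torsion with the elements $r_{k,a}$ of Theorem \ref{generating set k} is a sensible repair, but note that it enlarges the generating set beyond the stated $S$.
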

\begin{proof}
    Let $\Gamma_n=\bigoplus_{i=-n}^n \lambda^n \mathbb{Z}$. For each $n$, $\Gamma_n \cong \mathbb{Z}^{2n+1}$ hence by Theorem \ref{generating set theorem} and Lemma \ref{prescribed generating set} we have that for all $n$ there exists $2n+1$ generators of $\Gamma_n$ where one of them is 1, and the others are a collection of rationally independent irrational numbers $\{\lambda_i\}_{i=1}^{2n}$ in the interval $(4/5,1/2)$. Then, by iteratively applying the argument in the proof of Lemma \ref{prescribed generating set}, we may assume, that for all $n$ there is a generating set of $\Gamma_n$ given by 1 and a collecton of rationally independent irrational numbers $\{\lambda_i\}_{i=1}^{2n}$ such that $4/5<\lambda_i<\lambda_{i+1}<1/2$ for all $i$. Since $\Gamma= \bigcup_{n \in \mathbb{N}}\Gamma_n$, the proof follows. By Theorem \ref{generating set theorem} we also have that 
    $D(IE(\Gamma_n))=\langle \sigma_i, \hat{\sigma_i}  \; i=1,...,n \rangle$. The result then follows from the observation that $D(IE(\Gamma))=\cup_{n=1}^\infty D(IE(\Gamma_n))$.
\end{proof}
\begin{rmk}
Note that in certain cases, (for a concrete example if $\lambda=\sqrt[n]{a}$ for $a \in \mathbb{Q}$), we have that $Rank_\mathbb{Q}(\Gamma)$ is finite. In this case, $\Gamma$ is polycyclic, with generating set $\{\lambda^i\}_{i=1}^{Rank_\mathbb{Q}(\Gamma)}$. In this case, we may apply Theorem \ref{generating set k} to find a finite generating set. 
\end{rmk}

Let us compute homological information for examples of the form $\mathbb{Z}[\lambda,\lambda^{-1}]$. Let the minimal polynomial of $\lambda$ be given by: 
$\lambda^d+\sum_{i=0}^{d-1}a_i \lambda^i=0$ be the minimal polynomial of $\lambda$. Consider the map $\phi_\lambda$ given by:
    $$\phi_\lambda: \mathbb{Z}^d \cong \bigoplus_{i=0}^{d-1} \lambda^i\mathbb{Z} \rightarrow \bigoplus_{i=0}^{d-1} \lambda^i\mathbb{Z} \quad t \mapsto \lambda t $$
 
    Moreover, $\Gamma$ is the direct limit of the sequence:
    $$ \mathbb{Z}^{d} \xrightarrow{\phi_\lambda}  \mathbb{Z}^{d} \xrightarrow{\phi_\lambda}  \mathbb{Z}^{d} \xrightarrow{\phi_\lambda} \hdots $$
It follows that $H_*(\Gamma)=\lim_{\phi_\lambda}H_*(\mathbb{Z}^d)$. Then, $H_*(\Gamma)=0$ whenever $*>d$, and $H_d(\Gamma)=0$. Therefore, $IE(\Gamma)$ is not rationally acyclic by \ref{rational homology}.   
\begin{example}[$d=2$]
 If $d=2$, one obtains that  $H_3(\mathbb{Z}[\lambda,\lambda^{-1}])=0$ through the above computation.  Hence, Lemma \ref{ah conj} reduces to a (split) short exact sequence of abelian groups:
 $$0 \rightarrow  \mathbb{Z}[\lambda,\lambda^{-1}] \otimes \mathbb{Z}_2 \xrightarrow{I} IE(\mathbb{Z}[\lambda,\lambda^{-1}])_{ab} \xrightarrow{j} H_2(\mathbb{Z}[\lambda,\lambda^{-1}])$$
$(\mathbb{Z}[\lambda,\lambda^{-1}] \otimes \mathbb{Z}_2) \oplus H_2(\mathbb{Z}[\lambda,\lambda^{-1}]) \cong IE(\mathbb{Z}[\lambda,\lambda^{-1}])_{ab}$ 

\end{example}

\end{document}